

\documentclass{pnastwo_mod}








\usepackage{graphicx}  






\usepackage{multibib}
\newcites{pri}{Primary sources}
\newcites{sec}{Secondary sources}


\usepackage{amssymb,amsfonts,amsmath}

\usepackage{graphicx}          		
\usepackage{graphics}         	 	
\usepackage{epsfig}            		
\usepackage{subfigure}        		
\usepackage{bm}
\usepackage[usenames,dvipsnames]{color}   

\def\EndProof{ \quad \vrule width 1.3ex height 1.3ex depth 0pt }
\newenvironment{proof}{\textbf{Proof}\hspace{4pt}}{\hfill\EndProof}

\newcommand\oprocendsymbol{\hbox{$\square$}}
\newcommand\oprocend{\relax\ifmmode\else\unskip\hfill\fi\oprocendsymbol}

\newcounter{example}
    \newenvironment{example}[1][]{\refstepcounter{example}\par\medskip\noindent%
       \textbf{Example~\theexample. #1} \rmfamily}{\medskip}

\newcommand*\fvec[1]{\ensuremath{\mathbf{#1}}}                                	
\newcommand*\dt[0]{\frac{d}{d\,t}\,}					                      	
\newcommand*\mc[0]{\mathcal}                                                  		
\newcommand*\mbb[0]{\mathbb}                                                  		
\DeclareMathOperator*{\argmax}{argmax}                                        		
\DeclareMathOperator*{\diag}{\mathrm{diag}}						

\newcommand{\Ker}{\mathrm{Ker\,}}
\newcommand{\Img}{\mathrm{Im\,}}
\DeclareMathOperator*{\sinbf}{\mathrm{\bf sin}} 	                              
\DeclareMathOperator*{\arcsinbf}{\mathrm{\bf arcsin}} 	                              

\newcommand{\until}[1]{\{1,\dots, #1\}}
\newcommand{\subscr}[2]{#1_{\textup{#2}}}

\newcommand{\setdef}[2]{\{#1 \; | \; #2\}}

\newcommand{\map}[3]{#1: #2 \rightarrow #3}

\newcommand{\real}{\mathbb{R}}


\newcommand{\mycircle}{\ensuremath{\mbb S^1}}
\newcommand{\torus}{\ensuremath{\mbb T}}
\newcommand{\rot}{\operatorname{rot}}



\contributor{Submitted to Proceedings
of the National Academy of Sciences of the United States of America}
\url{www.pnas.org/cgi/doi/10.1073/pnas.0709640104}
\copyrightyear{2008}
\issuedate{Issue Date}
\volume{Volume}
\issuenumber{Issue Number}

\contributor{}
\url{}
\copyrightyear{}
\issuedate{}
\volume{}
\issuenumber{}

\begin{document}



\title{Synchronization in Complex Oscillator Networks and Smart Grids}





\author{Florian D\"orfler\affil{1}{Center for Control, Dynamical Systems and Computation, University of California at Santa Barbara, Santa Barbara, CA 93106, USA}\affil{2}{Center for Nonlinear Studies and Theory Division, Los Alamos National Laboratory, NM 87545, USA},
Michael Chertkov\affil{2}{},
\and
Francesco Bullo\affil{1}{}}

\contributor{}

\maketitle

\begin{article}

\begin{abstract}
The emergence of synchronization in a network of coupled oscillators is a fascinating topic in various scientific disciplines. A coupled oscillator network is characterized by a population of heterogeneous oscillators and a graph describing the interaction among them. It is known that a strongly coupled and sufficiently homogeneous network synchronizes, but the exact threshold from incoherence to synchrony is unknown. Here we present a novel, concise, and closed-form condition for synchronization of the fully nonlinear, non-equilibrium, and dynamic network. Our synchronization condition can be stated elegantly in terms of the network topology and parameters, or equivalently in terms of an intuitive, linear, and static auxiliary system.  Our results significantly improve upon the existing conditions advocated thus far, they are provably exact for various interesting network topologies and parameters, they are statistically correct for almost all networks, and they can be applied equally to synchronization phenomena arising in physics and biology as well as in engineered oscillator networks such as electric power networks. We illustrate the validity, the accuracy, and the practical applicability of our results in complex networks scenarios and in smart grid applications.
\end{abstract}

\keywords{synchronization | complex networks | power grids | nonlinear dynamics }





The scientific interest in the synchronization of coupled oscillators can be traced back to Christiaan Huygens' seminal work on ``an odd kind sympathy'' between coupled pendulum clocks \citepri{SI-CH:1673}, and it continues to fascinate the scientific community to date \citepri{SI-SHS:03,SI-ATW:01}. A mechanical analog of a coupled oscillator network is shown in Figure \ref{Fig: Mechanical analog} and consists of a group of particles constrained to rotate around a circle and assumed to move without colliding. Each particle is characterized by a phase angle $\theta_{i}$ and has a preferred natural rotation frequency $\omega_{i}$. Pairs of interacting particles $i$ and $j$ are coupled through an elastic spring with stiffness $a_{ij}$. 
Intuitively, a weakly coupled oscillator network with strongly heterogeneous natural frequencies $\omega_{i}$ does not display any coherent behavior, whereas a strongly coupled network with sufficiently homogeneous natural frequencies is amenable to synchronization. These two qualitatively distinct regimes are illustrated in Figure~\ref{Fig: Mechanical analog}.
\begin{figure}[htbp]
	\centering{
	\includegraphics[width=0.96\columnwidth]{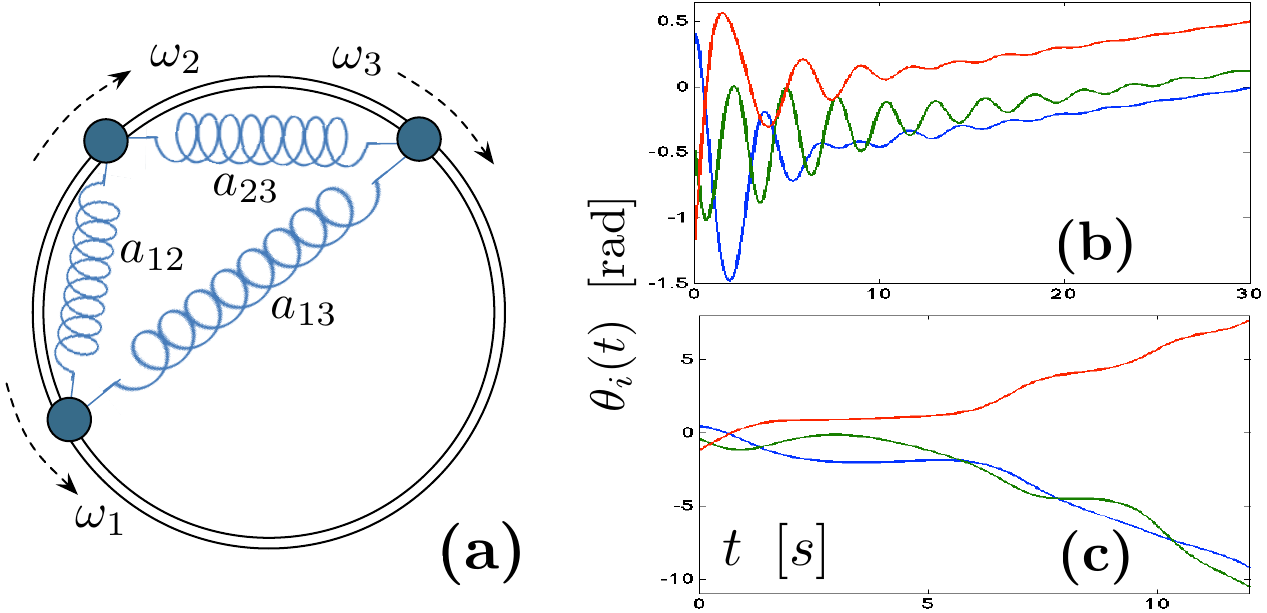}
	\caption{Mechanical analog of a coupled oscillator network (a) and its dynamics in a strongly coupled (b) and weakly coupled (c) network. With exception of the coupling weights $a_{ij}$, all parameters in the simulation (b) and (c) are identical.}
	\label{Fig: Mechanical analog}
	}
\end{figure}

Formally, the interaction among $n$ such phase oscillators is
modeled by a connected graph $G(\mc V,\mc E,A)$ with nodes $\mc V = \until
n$, edges $\mc E \subset \mc V \times \mc V$, and positive weights
$a_{ij}>0$ for each undirected edge $\{i,j\} \in \mc E$. For pairs of non-interacting oscillators $i$ and $j$, the coupling weight $a_{ij}$ is zero. 
We assume that the node set is partitioned as $\mc V = \mc V_{1} \cup \mc V_{2}$, and we consider the following general coupled oscillator model:
\begin{align}
	\begin{split}
	M_{i} \ddot \theta_{i} + D_{i} \dot \theta_{i}
	&=
	\omega_{i} - \sum\nolimits_{j=1}^{n} a_{ij} \sin(\theta_{i}-\theta_{j})
	\,,\qquad i \in \mc V_{1}\,,
	\\
	D_{i} \dot \theta_{i}
	&=
	\omega_{i} - \sum\nolimits_{j=1}^{n} a_{ij} \sin(\theta_{i}-\theta_{j})
	\,,\qquad i \in \mc V_{2} \,.
	\end{split}
	\label{eq: coupled oscillator model}
\end{align}
The coupled oscillator model \eqref{eq: coupled oscillator model} consists
of the second-order oscillators $\mc V_{1}$ with Newtonian dynamics, inertia
coefficients $M_{i}$, and viscous damping $D_{i}$. The remaining 
oscillators $\mc V_{2}$ feature first-order dynamics with time constants $D_{i} $.
A perfect electrical analog of the coupled oscillator model \eqref{eq: coupled oscillator model} is given by the classic structure-preserving power network model \citepri{SI-ARB-DJH:81}, our enabling application of interest. Here, the first and second-order dynamics correspond to loads and generators, respectively, and the right-hand sides depict the power injections $\omega_{i}$ and the power flows $a_{ij} \sin(\theta_{i}-\theta_{j})$ along transmission~lines.

The rich dynamic behavior of the coupled oscillator model \eqref{eq:
  coupled oscillator model} arises from a competition between each
oscillator's tendency to align with its natural frequency $\omega_{i}$ and
the synchronization-enforcing coupling $a_{ij} \sin(\theta_{i} -
\theta_{j})$ with its neighbors.
In absence of the first term, the coupled oscillator dynamics \eqref{eq: coupled oscillator model} collapse to a trivial phase-synchronized equilibrium, where all angles $\theta_{i}$ are aligned. The dissimilar natural frequencies $\omega_i$, on the other hand, drive the oscillator network away from this all-aligned equilibrium. Moreover, even if the coupled oscillator model \eqref{eq: coupled oscillator model} synchronizes, it still carries the flux of angular rotation,  
respectively, the flux of electric power from generators to loads in a power network.
The main and somehow surprising result of this paper is that, in spite of all the aforementioned complications,  an elegant and easy to verify criterion characterizes synchronization of the nonlinear and non-equilibrium dynamic oscillator network \eqref{eq: coupled oscillator model}.

\section*{Review of Synchronization in Oscillator Networks}

The coupled oscillator model \eqref{eq: coupled oscillator model} unifies various models in the literature including dynamic models of electric power networks. 
The supplementary information (SI) discusses modeling of electric power networks in detail.
For $\mc V_{2} = \emptyset$, the coupled oscillator model \eqref{eq: coupled oscillator model} appears in synchronization phenomena in animal flocking behavior \citepri{SI-SYH-EJ-MJK:10},
populations of flashing fireflies \citepri{SI-GBE:91}, crowd synchrony on London's Millennium bridge \citepri{SI-SHS-DMA-AMR-BE-EO:05}, 
as well as Huygen's pendulum clocks \citepri{SI-MB-MFS-HR-KW:02}. 
For $\mc V_{1}=\emptyset$, the coupled oscillator model \eqref{eq: coupled oscillator model} reduces to the celebrated Kuramoto model \citepri{SI-YK:75}, which appears in coupled Josephson junctions \citepri{SI-KW-PC-SHS:98}, particle coordination \citepri{SI-DAP-NEL-RS-DG-JKP:07}, spin glass models~\citepri{SI-GJ-JA-DB-ACCC-CPV:01,SI-HD:92}, 
neuroscience~\citepri{SI-FV-JPL-ER-JM:01}, deep brain stimulation \citepri{SI-PAT:03}, 
chemical oscillations \citepri{SI-IZK-YZ-JLH:02}, biological locomotion \citepri{SI-NK-GBE:88}, rhythmic applause \citepri{SI-ZN-ER-TV-YB-AIB:00}, and countless other 
synchronization phenomena \citepri{SI-SHS:00,SI-JAA-LLB-CJPV-FR-RS:05,SI-FD-FB:10w}. Finally, coupled oscillator models of the form \eqref{eq: coupled oscillator model} also serve as prototypical examples in complex networks studies~\citepri{SI-AA-ADG-JK-YM-CZ:08,SI-SB-VL-YM-MC-DUH:06}. 

The coupled oscillator dynamics \eqref{eq: coupled oscillator model} feature
 the synchronizing effect of the coupling described by the graph $G(\mc
V,\mc E,A)$ and the de-synchronizing effect of the dissimilar natural
frequencies $\omega_{i}$. 
The complex network community asks questions of the form ``what are the conditions on the coupling and the dissimilarity such that a synchronizing behavior emerges?'' Similar questions appear also in all the aforementioned applications, for instance, in large-scale electric power systems. 
Since synchronization is pervasive in the operation of an interconnected power grid, a central question is ``under which conditions on the network parameters and topology,
the current load profile and power generation, does there
exist a synchronous operating point \citepri{SI-BCL-PWS-MAP:99,SI-ID:92}, when is it optimal \citepri{SI-JL-DT-BZ:10}, when is it stable 
\citepri{SI-DJH-GC:06,SI-FD-FB:09z}, and how robust is it \citepri{SI-MI:92,AA-SS-VP:81,SI-FW-SK:82,SI-FFW-SK:80}?'' 
A local loss of synchrony can trigger cascading failures and possibly result in wide-spread blackouts.
In the face of the complexity of future smart grids and the integration
challenges posed by renewable energy sources, a deeper understanding of
synchronization is increasingly important.

Despite the vast scientific interest, the search for
sharp, concise, and closed-form synchronization conditions for coupled
oscillator models of the form \eqref{eq: coupled oscillator model} has been
so far in vain.
Loosely speaking, synchronization occurs when the
coupling dominates the dissimilarity.  Various conditions have been proposed to quantify this trade-off ~\citepri{SI-FD-FB:10w,SI-FFW-SK:80,SI-FD-FB:09z,SI-AJ-NM-MB:04,SI-AA-ADG-JK-YM-CZ:08,SI-SB-VL-YM-MC-DUH:06,SI-FW-SK:82,SI-LB-LS-ADG:09}.
The coupling is typically quantified by the nodal degree or the algebraic connectivity of the graph $G$, 
and the
dissimilarity is quantified by the magnitude or the spread of the natural frequencies $\omega_{i}$.
Sometimes, these conditions can be evaluated only numerically since they depend on the network state \citepri{SI-FFW-SK:80,SI-FW-SK:82} or arise from a non-trivial linearization process, such as the Master stability function formalism
\citepri{SI-AA-ADG-JK-YM-CZ:08,SI-SB-VL-YM-MC-DUH:06}.
To date, exact synchronization conditions are known only for simple coupling topologies~\citepri{SI-NK-GBE:88,SI-FD-FB:10w,SI-SHS-REM:88,SI-MV-OM:09}. For arbitrary topologies only sufficient conditions are known \citepri{SI-FFW-SK:80,SI-FD-FB:09z,SI-AJ-NM-MB:04,SI-FW-SK:82} as well as numerical investigations for random networks~\citepri{SI-JGG-YM-AA:07,SI-TN-AEM-YCL-FCH:03,SI-YM-AFP:04}. 
Simulation studies indicate that the known sufficient conditions are
very conservative estimates on the threshold from incoherence to synchrony.
Literally, every review article on synchronization concludes emphasizing
the quest for exact synchronization conditions for arbitrary network
topologies and
parameters \citepri{SI-JAA-LLB-CJPV-FR-RS:05,SI-FD-FB:10w,SI-SHS:00,SI-AA-ADG-JK-YM-CZ:08,SI-SB-VL-YM-MC-DUH:06}.
In this article, we present a concise and sharp synchronization condition which features elegant graph-theoretic and physical interpretations.

\section*{Novel Synchronization Condition}

For the coupled oscillator model \eqref{eq: coupled oscillator model} and
its applications, the following notions of
synchronization are appropriate. First, a solution has {\em synchronized
  frequencies} if all frequencies $\dot \theta_{i}$ are identical to a
common constant value $\subscr{\omega}{sync}$.
If a synchronized solution exists, it is known that the synchronization
frequency is $\subscr{\omega}{sync} = \sum_{k=1}^{n} \omega_{k} /
\sum_{k=1}^{n} D_{k}$ and that, by working in a rotating reference frame,
one may assume $\subscr{\omega}{sync} = 0$.
Second, a solution has {\em cohesive phases} if every pair of connected
oscillators has phase distance smaller than some angle $\gamma \in
{[0,\pi/2[}$, that is, $|\theta _{i} - \theta _{j}| < \gamma$ for every
    edge $\{i,j\} \in \mc E$.

Based on a novel analysis approach to the synchronization problem, we
propose the following synchronization condition for the coupled oscillator
model \eqref{eq: coupled oscillator model}:
\begin{quotation}
  \noindent {\bf Sync condition:} The coupled oscillator model \eqref{eq: coupled
    oscillator model} has a unique and stable solution $\theta^{*}$ with
  synchronized frequencies and cohesive phases $|\theta _{i}^{*} - \theta
  _{j}^{*}| \leq \gamma < \pi/2$ for every pair of connected oscillators
  $\{i,j\} \in \mc E$~if
  \begin{equation}
    \bigl\| L^{\dagger} \omega \bigr\|_{\mc E,\infty} \leq \sin(\gamma) \,.
    \label{eq: sync condition - gamma}
  \end{equation}
Here, $L^{\dagger}$ is the pseudo-inverse of the network Laplacian
matrix $L$
 and $\left\| x \right\|_{\mc E,\infty} = \max_{\{i,j\} \in \mc E} |
x_{i} - x_{j}|$ is the worst-case dissimilarity for 
$x\!=\!(x_1,\dots,x_n)$ over the edges $\mc E$.
\end{quotation}
%
We establish the broad applicability of the proposed condition \eqref{eq: sync condition - gamma} to various
classes of networks via analytical and statistical methods in the next section.
Before that,  we provide some equivalent formulations for condition [2] in order to develop deeper intuition and obtain insightful conclusions.

{\bf Complex network interpretation:} Surprisingly, topological or spectral
connectivity measures such as nodal degree or algebraic connectivity are
not key to synchronization. In fact, these often advocated
\citepri{SI-FFW-SK:80,SI-FD-FB:09z,SI-AJ-NM-MB:04,SI-FW-SK:82,SI-AA-ADG-JK-YM-CZ:08,SI-SB-VL-YM-MC-DUH:06}
connectivity measures turn out to be conservative estimates of the
synchronization condition \eqref{eq: sync condition - gamma}. This
statement can be seen by introducing the matrix $U$ of orthonormal
eigenvectors of the network Laplacian matrix $L$ with corresponding eigenvalues
$0=\lambda_1<\lambda_2\leq\cdots\leq\lambda_n$.  From this spectral
viewpoint, condition \eqref{eq: sync condition - gamma} can be equivalently
written as
\begin{equation}
  \bigl\| U \diag\bigl( 0, 1/\lambda_2, \dots, 1/\lambda_n  \bigr)  \cdot \bigl( U^{T} \omega \bigr) \bigr\|_{\mc E,\infty} \leq \sin(\gamma)
  \label{eq: equivalent modal condition}
  \,.
\end{equation}
In words, the natural frequencies $\omega$ are projected on the network
modes $U$, weighted by the inverse Laplacian eigenvalues, and $\| \cdot
\|_{\mc E,\infty}$ evaluates the worst-case dissimilarity of this weighted
projection.
A sufficient condition for the inequality \eqref{eq: equivalent modal condition} to be true is the algebraic connectivity condition
$\lambda_{2} \geq \| \omega \|_{\mc E,\infty} \cdot \sin(\gamma)$.
Likewise, a necessary condition for inequality \eqref{eq: equivalent modal condition} is $2 \cdot \textup{deg}(G) \geq \lambda_{n} \geq \| \omega \|_{\mc E,\infty} \cdot \sin(\gamma)$, where $\textup{deg}(G)$ is the maximum nodal degree in the graph $G(\mc V,\mc E,A)$.
Clearly, when compared to \eqref{eq: equivalent modal condition}, this
sufficient condition and this necessary condition feature only one of $n-1$
non-zero Laplacian eigenvalues and are overly conservative.

{\bf Kuramoto oscillator perspective:}
Notice, that in the limit $\gamma\to\pi/2$, condition \eqref{eq: sync condition - gamma} suggests that there exists a stable synchronized solution if 
\begin{equation}
	\bigl\| L^{\dagger} \omega \bigr\|_{\mc E,\infty} < 1
	\label{eq: sync condition}
	\,.
\end{equation}
For classic Kuramoto oscillators coupled in a complete graph with uniform weights $a_{ij} = K/n$, the synchronization condition \eqref{eq: sync condition} reduces to the condition $K > \max_{i,j \in \until n} |\omega_{i} - \omega_{j}|$, known for the classic Kuramoto model \citepri{SI-FD-FB:10w}.

{\bf Power network perspective:}
In power systems engineering, the equilibrium equations of the coupled oscillator model \eqref{eq: coupled oscillator model}, given by $\omega_{i} = \sum_{j=1}^{n} a_{ij}\sin(\theta_{i} - \theta_{j})$, are referred to as the AC power flow equations, and they are often approximated by their linearization \citepri{SI-MI:92,SI-AA-SS-VP:81,SI-FW-SK:82,SI-FFW-SK:80} $\omega_{i} = \sum_{j=1}^{n} a_{ij}(\theta_{i} - \theta_{j})$, known as the DC power flow equations. In vector notation the DC power flow equations read as $\omega = L \theta$, and their solution satisfies $\max_{\{i,j\} \in \mc E} | \theta_{i} - \theta_{j}| = \| L^{\dagger} \omega \|_{\mc E,\infty}$. According to condition \eqref{eq: sync condition - gamma}, the worst phase distance $\| L^{\dagger} \omega \|_{\mc E,\infty}$ obtained by the DC power flow equations needs to be smaller than $\sin(\gamma)$, such that the solution to the AC power flow equations satisfies $\max_{\{i,j\} \in \mc E} |\theta_{i} - \theta_{j}| < \gamma$. 
Hence, our condition extends the common DC power flow approximation from infinitesimally small angles $\gamma \ll 1$ to large angles $\gamma \in {[0,\pi/2[}$.

{\bf Auxiliary linear perspective:}
As detailed in the previous paragraph, the key term $L^{\dagger} \omega$ in condition \eqref{eq: sync condition - gamma} equals the phase differences obtained by the linear Laplacian equation $\omega = L \theta$. This {\em linear} interpretation is not only insightful but also practical since condition \eqref{eq: sync condition - gamma} can be quickly evaluated by numerically solving the sparse linear system $\omega = L \theta$. Despite this linear interpretation, we emphasize that our derivation of condition \eqref{eq: sync condition - gamma} is not based on any linearization arguments.

{\bf  Energy landscape perspective:}
Condition \eqref{eq: sync condition - gamma} can also be understood in terms of an appealing energy landscape interpretation. The coupled oscillator model \eqref{eq: coupled oscillator model} is a system of particles that aim to minimize the energy function
\begin{equation*}
	E(\theta)
	= 
	\sum\nolimits_{\{i,j\} \in \mc E} a_{ij} \bigl( 1 - \cos(\theta_{i} - \theta_{j} ) \bigr) - \sum\nolimits_{i=1}^{n} \omega_{i} \cdot \theta_{i}
	\,,
\end{equation*}
where the first term is a pair-wise nonlinear attraction among the particles, and the second term represents the external force driving the particles away from the ``all-aligned" state. 
Since the energy function $E(\theta)$ is difficult to study, it is natural
to look for a minimum of its second-order approximation
$E_0(\theta)=\sum_{\{i,j\} \in \mc E} a_{ij} (\theta_i-\theta_j)^2/2 -
\sum_{i=1}^{n} \omega_i \cdot \theta_i$, where the first term corresponds
to a Hookean potential. Condition \eqref{eq: sync condition - gamma} is then
restated as follows: $E(\theta)$ features a phase cohesive minimum 
with interacting particles no further than $\gamma$ apart if
$E_{0}(\theta)$ features a minimum with interacting particles no further
from each other than $\sin(\gamma)$, as illustrated in Figure \ref{Fig: Energy landscape}.
\begin{figure}[htbp]
	\centering{
	\includegraphics[width=0.85\columnwidth]{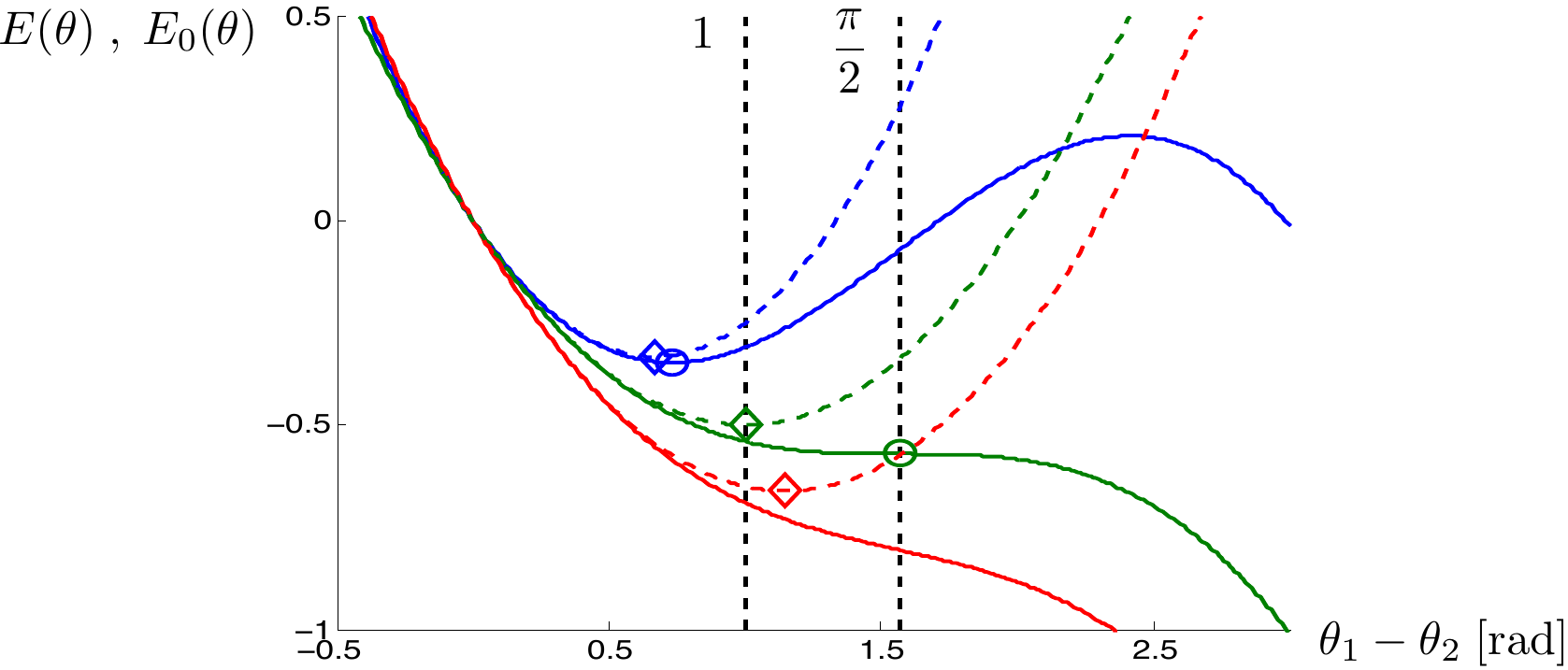}
	\caption{The energy function $E(\theta)$ and its quadratic approximation $E_{0}(\theta)$ for a two-particle system are shown as solid and dashed curves, respectively, for the stable (blue), marginal (green) and unstable (red) cases. The circles and diamonds represent stable critical points of $E(\theta)$ and $E_{0}(\theta)$.} 
	\label{Fig: Energy landscape}
	}
\end{figure}

\section*{Analytical and Statistical Results}

Our analysis approach to the synchronization problem is based on algebraic graph theory. We propose an equivalent reformulation of the synchronization problem, which reveals the crucial role of cycles and cut-sets in the graph and ultimately leads to the synchronization condition \eqref{eq: sync condition - gamma}.
%
%
In particular, we analytically establish  the synchronization condition \eqref{eq: sync condition - gamma} for the following six interesting cases: 
\begin{quotation}
  \noindent {\bf Analytical result:} The synchronization condition \eqref{eq: sync condition - gamma} is necessary and sufficient for {\em (i)} the sparsest (acyclic) and {\em (ii)} the densest (complete and uniformly weighted) network topologies $G(\mc V,\mc E,A)$, {\em (iii)} the best (phase synchronizing) and {\em (iv)} the worst (cut-set inducing) natural frequencies, {\em (v)} for cyclic topologies of length strictly less than five, {\em (vi)} for arbitrary cycles with symmetric parameters, {\em (vii)} as well as one-connected combinations of networks each satisfying one of the conditions {\em (i)}-{\em (vi)}. 
\end{quotation}
A detailed and rigorous mathematical derivation and statement of the above analytical result can be found in the SI.
%
%
%

After having analytically established condition \eqref{eq: sync condition - gamma} for a variety of particular network topologies and parameters, we establish its correctness and predictive power for arbitrary networks. 
Extensive simulation studies lead to the conclusion that the proposed synchronization condition \eqref{eq: sync condition - gamma} is 
statistically correct. In order to verify this hypothesis, we conducted Monte Carlo simulation studies over a wide range of natural frequencies $\omega_{i}$, network sizes $n$, coupling weights $a_{ij}$, and different random graph models 
of varying degrees of sparsity and randomness. In total, we constructed $1.2 \cdot 10^{6}$ samples of nominal random networks, each with a connected graph $G(\mc V,\mc E, A)$ and natural frequencies $\omega$ satisfying $\| L^{\dagger} \omega \|_{\mc E,\infty} \leq \sin(\gamma)$ for some $\gamma < \pi/2$.
The detailed results can be found in the SI and allow us to establish the following probabilistic result with a confidence level of at least 99\% and accuracy of at least\,99\%:
\begin{quotation}
\noindent
{\bf Statistical result:} 
With  99.97 \% probability, for a nominal network, condition \eqref{eq: sync condition - gamma} guarantees the existence of an unique and stable solution $\theta^{*}$ with synchronized frequencies and cohesive phases $|\theta _{i}^{*} - \theta _{j}^{*}| \leq \gamma$ 
for every pair of connected oscillators $\{i,j\} \in \mc E$.
\end{quotation}
From this statistical result, we deduce that the proposed synchronization condition \eqref{eq: sync condition - gamma} holds true for {\em almost all} network topologies and parameters. Indeed, we also show the existence of possibly-thin sets of topologies and parameters for which our condition \eqref{eq: sync condition - gamma} is not sufficiently tight. We refer to the SI for an explicit family of carefully engineered and ``degenerate'' counterexamples.
Overall, our analytical and statistical results validate the correctness of the proposed condition~\eqref{eq: sync condition}. 

After having established the statistical correctness of condition \eqref{eq: sync condition - gamma}, we now investigate its predictive power for arbitrary networks.
%
Since we analytically establish that condition \eqref{eq: sync condition - gamma} is exact for sufficiently small pairwise phase cohesiveness $| \theta_{i} - \theta_{j} | \ll 1$, we now investigate the other extreme, $\max_{\{i,j\} \in \mc E} | \theta_{i} - \theta_{j} | = \pi/2$. To test the corresponding condition \eqref{eq: sync condition} in a low-dimensional parameter space, we consider a complex network of Kuramoto oscillators
\begin{equation}
	\dot \theta_{i}
	=
	\omega_{i} - K \cdot \sum\nolimits_{j=1}^{n} a_{ij} \sin(\theta_{i}-\theta_{j})
	\,, \quad i \in \until n \,,
	\label{eq: topological Kuramoto model}
\end{equation}
where all coupling weights $a_{ij}$ are either zero or one, and the coupling gain $K>0$ serves as
control parameter. If $L$ is the corresponding unweighted Laplacian matrix, then
 condition \eqref{eq: sync condition} reads as $K > \subscr{K}{critical}
\triangleq \| L^{\dagger} \omega \|_{\mc E,\infty}$.
Of course, the condition $K > \subscr{K}{critical}$ is only sufficient and
the critical coupling may be smaller than
$\subscr{K}{critical}$. 
In order to test the accuracy of the condition $K > \subscr{K}{critical}$,
we numerically found the smallest value of $K$ leading to synchrony with
phase cohesiveness $\pi/2$.  

Figure \ref{Fig: Statistical studies} reports
our findings for various network sizes, connected random graph models, and
sample distributions of the natural frequencies. We refer to the SI for the detailed simulation setup.
%
%
%
First, notice from Subfigures (a),(b),(d), and (e) that condition \eqref{eq: sync condition} is extremely accurate for a sparse graph, that is, for small
$p$ and $n$, as expected from our analytical results. 
Second, for a dense graph with $p\approx 1$, Subfigures (a),(b),(d), and (e) confirm the results known for classic Kuramoto oscillators \citepri{SI-FD-FB:10w}: for a bipolar distribution condition \eqref{eq: sync condition} is exact, and for a uniform distribution a small critical coupling is obtained. 
Third, Subfigures (c) and (d) show that condition \eqref{eq: sync condition} is scale-free for a Watts-Strogatz small world network, that is, it has almost constant accuracy for various values of $n$ and $p$.
Fourth and finally, observe that condition \eqref{eq: sync condition} is always within a constant factor of the exact critical coupling, whereas other proposed conditions \citepri{SI-FFW-SK:80,SI-FD-FB:09z,SI-AJ-NM-MB:04,SI-FW-SK:82,SI-AA-ADG-JK-YM-CZ:08,SI-SB-VL-YM-MC-DUH:06} on the nodal degree or on the algebraic connectivity scale poorly with respect to network size $n$. 


\begin{figure}
  \centering{
    \includegraphics[width=0.99\columnwidth]{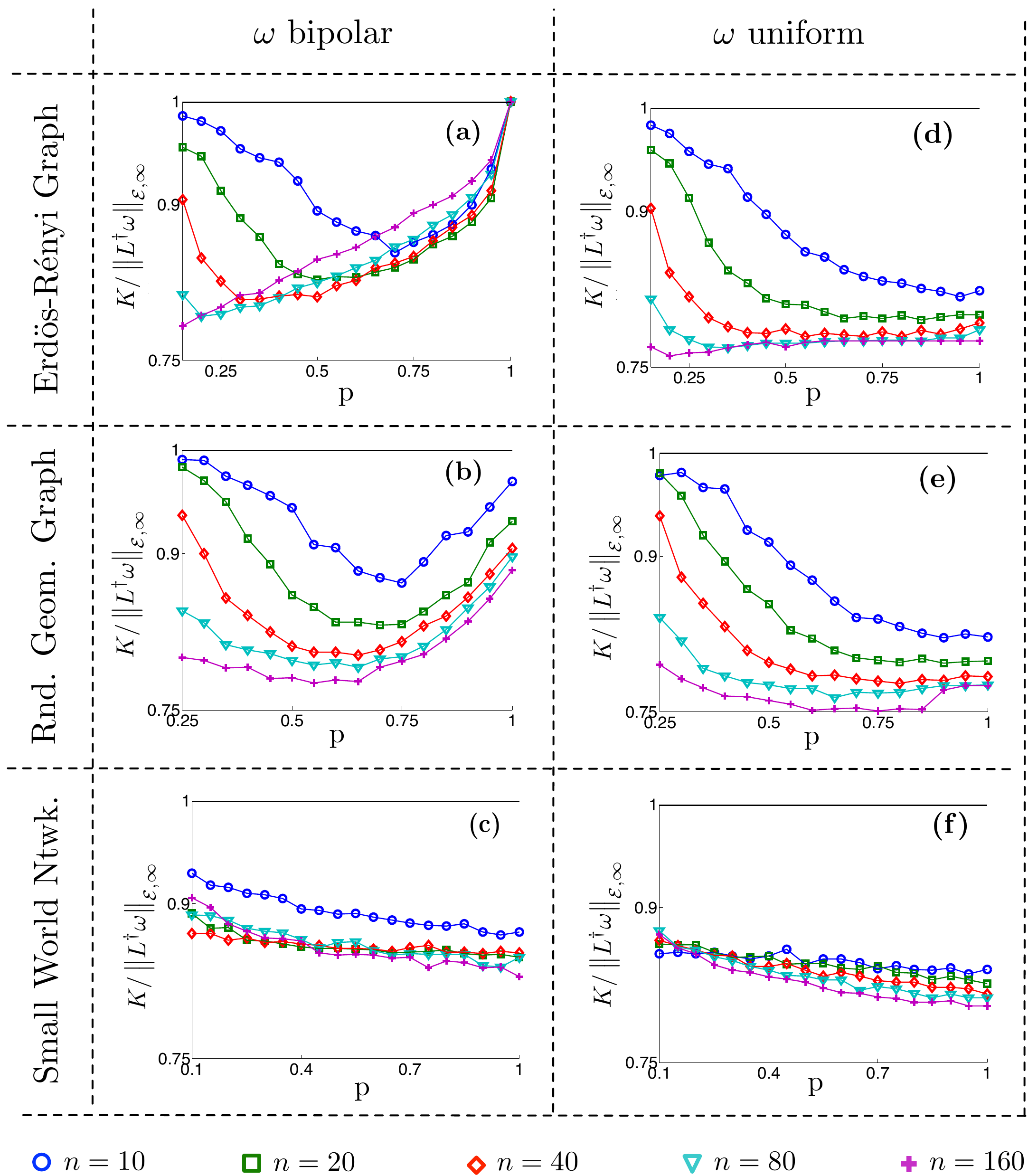}
    \caption{
Numerical evaluation of the exact critical coupling $K$ in a complex Kuramoto oscillator network. The subfigures show $K$ normalized by $\| L^{\dagger} \omega \|_{\mc E,\infty}$ for an Erd\"os-R\'enyi graph with probability $p$ of connecting two nodes, for a random geometric graph with connectivity radius $p$, and for a Watts-Strogatz small world network with rewiring probability $p$. 
	Each data point is the mean over 100 samples of the respective random graph model, for values of $\omega_{i}$ sampled from a bipolar or a uniform distribution supported on ${[-1,1]}$, and for the network sizes $n \in \{10,20,40,80,160\}$.
	}
	\label{Fig: Statistical studies}
	}
\end{figure}

\section*{Applications in Power Networks}

We envision that condition \eqref{eq: sync condition - gamma} can be applied to quickly assess synchronization and robustness in power networks under volatile operating conditions. 
%
Since real-world power networks are carefully engineered systems with particular network topologies and parameters, 
we do not extrapolate the statistical results from the previous section to power grids. Rather, we consider ten widely-established IEEE power network test cases provided by \citepri{SI-RDZ-CEM-DG:11,SI-CG-PW-PA-RA-MB-RB-QC-CF-SH-SK-WL-RM-DP-NR-DR-AS-MS-CS:99}. 

Under nominal operating conditions, the power generation is optimized to
meet the forecast demand, while obeying the AC power flow laws and
respecting the thermal limits of each transmission line. Thermal limits
constraints are precisely equivalent to phase cohesiveness requirements.
In order to test the synchronization condition \eqref{eq: sync condition - gamma} in a volatile smart grid scenario, we make the following changes to the nominal network: 1) We assume fluctuating demand and randomize 50\% of all loads to deviate from the forecasted loads. 2) We assume that the grid is penetrated by renewables with severely fluctuating power outputs, for example, wind or solar farms, and we randomize 33\% of all generating units to deviate from the nominally scheduled generation. 3) Following the paradigm of {\em smart operation of smart grids} \citepri{SI-PPV-FFW-JWB:11}, the fluctuations can be mitigated by fast-ramping generation, such as fast-response energy storage including batteries and flywheels, and controllable loads, such as large-scale server farms or fleets of plug-in hybrid electrical vehicles. Here, we assume that the grid is equipped with 10\% fast-ramping generation and 10\% controllable loads, and the power imbalance (caused by fluctuating demand and generation) is uniformly dispatched among these adjustable power sources.
For each of the ten IEEE test cases, we construct 1000 random realizations of the scenario 1), 2), and 3) described above, we numerically check for the existence of a synchronous solution, and we compare the numerical solution with the results predicted by our synchronization condition \eqref{eq: sync condition - gamma}. Our findings are reported in Table \ref{Table: sync condition for volatile power test cases}, and a detailed description of the simulation setup can be found in the SI.
It can be observed that condition \eqref{eq: sync condition - gamma} predicts the correct phase cohesiveness $| \theta_{i} - \theta_{j}|$ along all transmission lines $\{i,j\} \in \mc E$ with extremely high accuracy even for large-scale networks featuring 2383 nodes. 

\begin{figure}
	\centering{
	\includegraphics[width=.75\columnwidth]{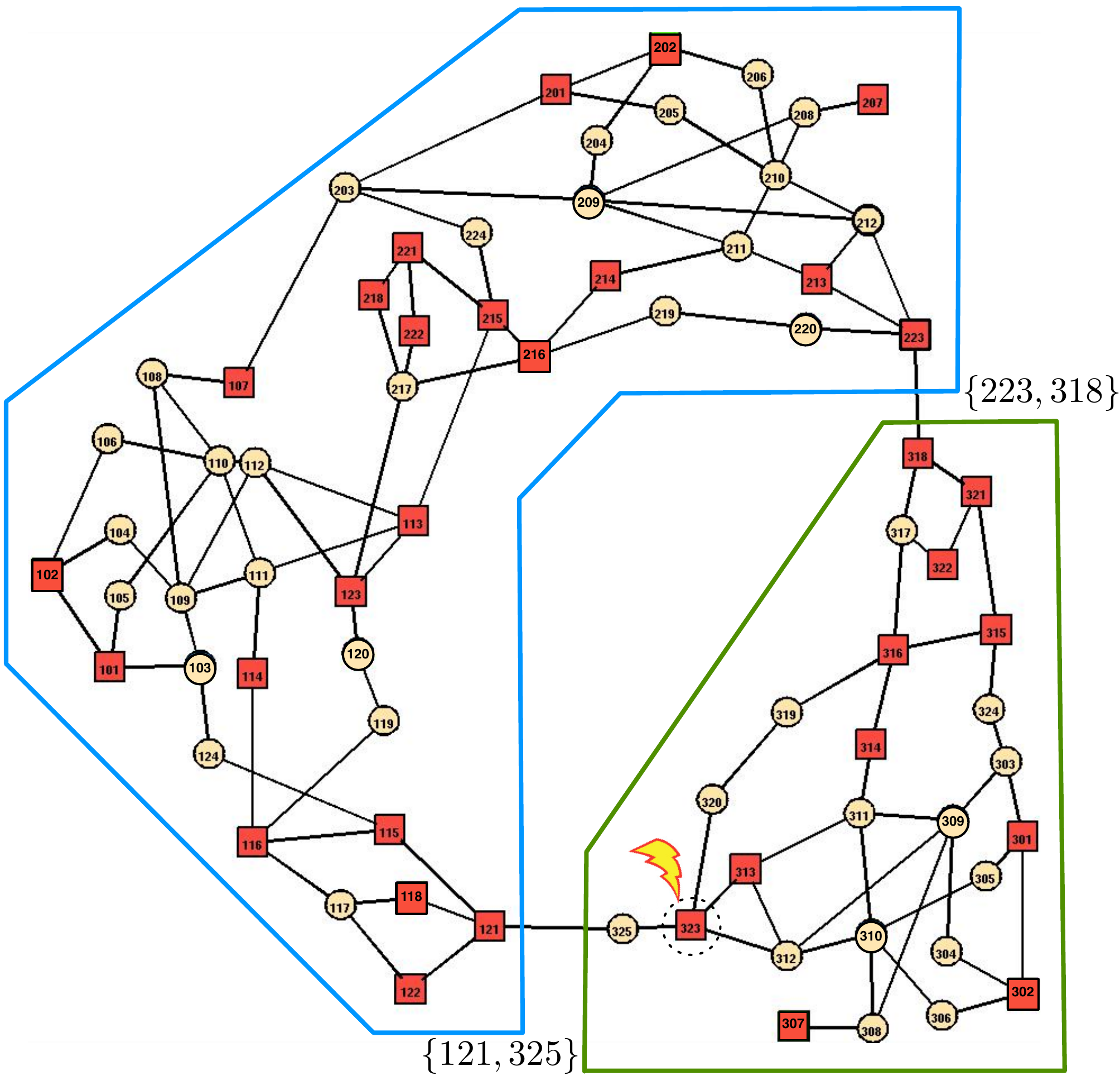}
	\caption{Illustration of contingencies the RTS 96 power network. Here, square nodes are generators and round nodes are
           loads, large amounts of power are exported from the Northwestern area to the Southeastern area, and generator 323 is tripped.}
	\label{Fig: RTS 96}
	}
\end{figure}
As a final test, we validate the synchronization condition \eqref{eq: sync
  condition - gamma} in a stressed power grid case study. We consider the
{\em IEEE Reliability Test System 96} (RTS 96)
\citepri{SI-CG-PW-PA-RA-MB-RB-QC-CF-SH-SK-WL-RM-DP-NR-DR-AS-MS-CS:99} illustrated
in Figure~\ref{Fig: RTS 96}.
We assume the following two contingencies have taken place and we
characterize the remaining safety margin. First, we assume generator 323 is
disconnected, possibly due to maintenance or failure events. Second, we
consider the following imbalanced power dispatch situation: the power demand at 
each load in the Southeastern area deviates from the nominally forecasted demand 
by a uniform and positive amount, and the resulting power deficiency is compensated by
uniformly increasing the generation in the Northwestern area.  This
imbalance can arise, for example, due to a shortfall in predicted load and renewable
energy generation.  Correspondingly, power is exported from the
Northwestern to the Southeastern area via the transmission
lines $\{121, 325\}$ and $\{223, 318\}$.
At a nominal operating condition, the RTS 96 power network is sufficiently robust to tolerate each single one
of these two contingencies, but the safety margin is now minimal.  When
both contingencies are combined, then our synchronization condition
\eqref{eq: sync condition - gamma} predicts that the thermal limit of the
transmission line $\{121, 325\}$ is reached at an additional loading of
22.20\%.
Indeed, the dynamic simulation scenario shown in Figure \ref{Fig: RTS 96 -
  dynamic simulation} validates the accuracy of this prediction. It can be
observed, that synchronization is lost for an additional loading of
22.33\%, and the areas separate via the transmission line $\{121,
325\}$. This separation triggers a cascade of events, such as the outage of
the transmission line $\{223, 318\}$, and the power network is en route to
a blackout.
We remark that, if generator 323 is not disconnected and there are no
thermal limit constraints, then, by increasing the loading, we observe the
classic loss of synchrony through a saddle-node bifurcation. Also this
bifurcation can be predicted accurately by our results, see the SI for a detailed
description.
%

\begin{figure}
	\centering{
	\includegraphics[width=0.99\columnwidth]{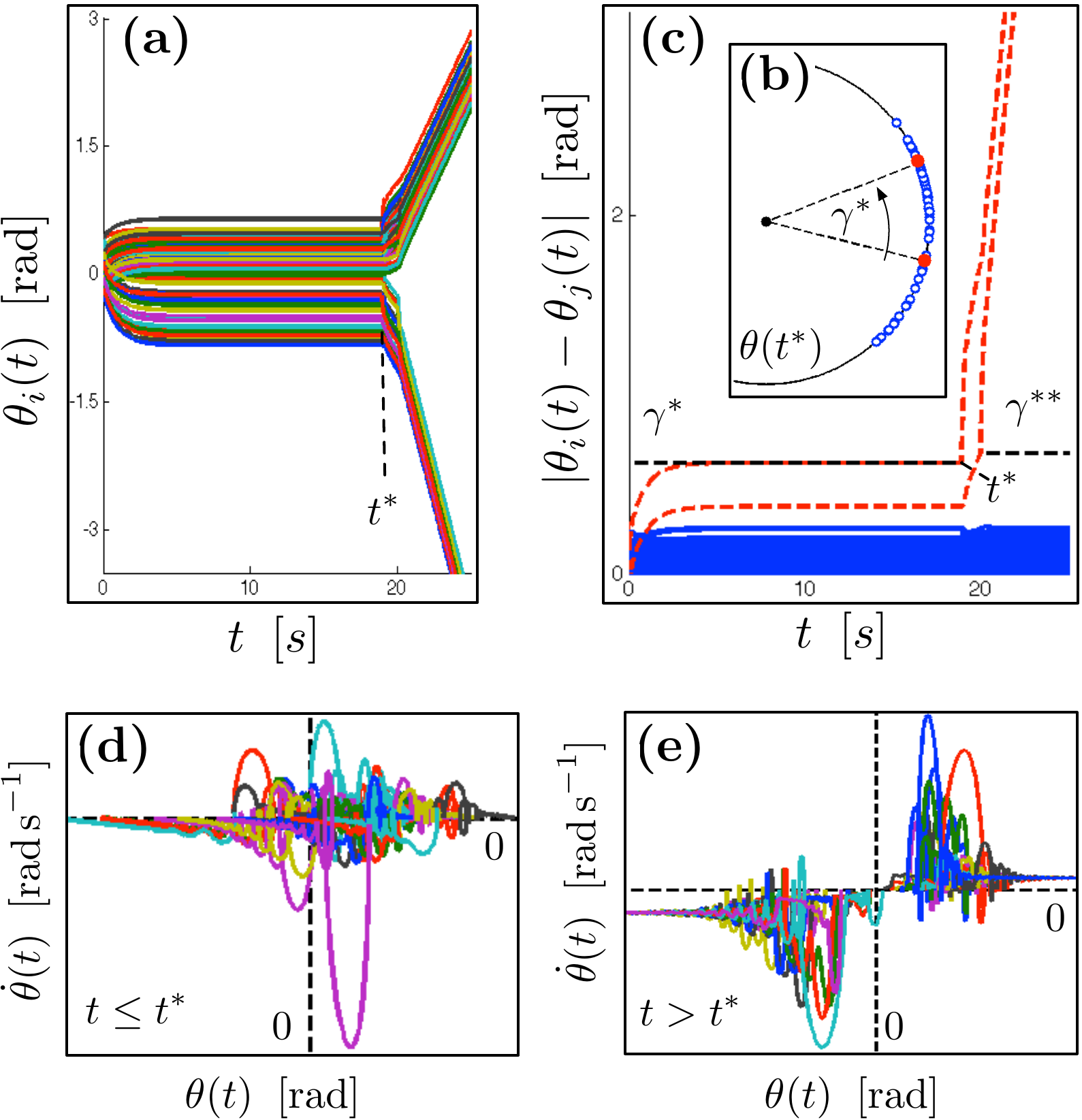}
	\caption{The RTS 96 dynamics for a continuous load increase from 22.19\% to 22.24\%. Subfigure (a) shows the angles $\theta(t)$ which loose synchrony at $t^{*} = 18.94\,$s, when the thermal limit $\gamma^{*}= 0.1977 \textup{\,rad}$ of the transmission line $\{121, 325\}$ is reached. Subfigure (b) shows the angles $\theta(t)$ at $t = t^{*}$. Subfigure (c) depicts the angular distances and the thermal limits $\gamma^{*}$ and $\gamma^{**}$ of the lines $\{121, 325\}$ and $\{223, 318\}$. Subfigures (d) and (e) show the generator phase space $\bigl(\theta(t),\dot\theta(t)\bigr)$ before and after $t^{*}$, where the loss of a common synchronization frequency can be observed.}
	\label{Fig: RTS 96 - dynamic simulation}
	}
\end{figure}

In summary, the results in this section confirm the validity, the applicability, and the accuracy of the synchronization condition \eqref{eq: sync condition - gamma} in  complex power network scenarios.

\section*{Discussion and Conclusions}
In this article we studied the synchronization phenomenon for broad class
of coupled oscillator models proposed in the scientific literature. We
proposed a surprisingly simple condition that accurately predicts
synchronization as a function of the parameters and the topology of the
underlying network. Our result, with its physical and graph theoretical
interpretations, significantly improves upon the existing test in the
literature on synchronization.  The correctness of our synchronization
condition is established analytically for various interesting network
topologies and via Monte Carlo simulations for a broad range of generic
networks.  We validated our theoretical results for complex Kuramoto
oscillator networks as well as in smart grid applications.

Our results equally answer as many questions as they pose. Among the
important theoretical problems to be addressed is a characterization of the
set of all network topologies and parameters for which our proposed
synchronization condition $\|L^{\dagger} \omega \|_{\mc E, \infty} < 1$ is
not sufficiently tight. We conjecture that this set is ``thin'' in an
appropriate parameter space. Our results suggest that an exact condition
for synchronization of any arbitrary network is of the form $\|L^{\dagger}
\omega \|_{\mc E, \infty} < c$, and we conjecture that the constant $c$ is
always strictly positive, upper-bounded, and close to one. 
Yet another important question not addressed in the
present article concerns the region of attraction of a synchronized
solution. We conjecture that the latter depends on the gap in the presented
synchronization condition.
On the application side, we envision that our synchronization conditions
enable emerging smart grid applications, such as power flow optimization
subject to stability constraints, distance to failure metric, and the
design of control strategies to avoid cascading failures.

\begin{table}[h]
\caption{Evaluation of condition \eqref{eq: sync condition - gamma} for ten IEEE test cases under volatile operating conditions.}
\label{Table: sync condition for volatile power test cases}
\end{table}
\vspace{-2cm}

\begin{table}[h]
\centering
\caption{Evaluation of condition \eqref{eq: sync condition - gamma} for ten IEEE test cases under volatile operating conditions.}
     \label{Table: sync condition for volatile power test cases}
     \caption{ Evaluation of condition \eqref{eq: sync condition - gamma} for ten IEEE test cases under volatile operating conditions.}
	\small
  \begin{tabular}{ | l | l | l | l | r}
    \hline & & &\\
     \!\!Randomized test\!\!  &  
     \!\!\tablenote{Correctness: $ \Big. \| L^{\dagger} \omega \|_{\mc E,\infty} \!\leq\! \sin(\gamma)$ $\implies$ $\max\nolimits_{\{i,j\} \in \mc E} | \theta_{i}^{*} - \theta_{j}^{*}| \leq \gamma$}{Correctness:}\!\! &  
     \!\!\tablenote{Accuracy: $\max\nolimits_{\{i,j\} \in \mc E} | \theta_{i}^{*} - \theta_{j}^{*}| -\arcsin(\| L^{\dagger} \omega \|_{\mc E,\infty})$ }{Accuracy:}\!\! & 
     \!\!\tablenote{Phase cohesiveness: $\max\nolimits_{\{i,j\} \in \mc E} | \theta_{i}^{*} - \theta_{j}^{*}|$}{Cohesive}\!\! \\
     \!\!case (1000 instances):$\Big.$\!\!  & & & \!\!{phases:}\!\!
    \\\hline\hline
     \!\!Chow 9 bus system\!\!  &  \!\tt{always true}\! &   \!\!$4.1218 \cdot 10^{-5}$\!\!  &  $0.12889$ 
    \\\hline    
     \!\!IEEE 14 bus system\!\! &  \!\tt{always true}\! &  \!\!$2.7995 \cdot 10^{-4}$\!\!   &  $0.16622$ 
    \\\hline
     \!\!IEEE RTS 24\!\! &  \!\tt{always true}\! &  \!\!$1.7089 \cdot 10^{-3}$\!\!   &  $0.22309$ 
    \\\hline
     \!\!IEEE 30 bus system\!\! &  \!\tt{always true}\! &  \!\!$2.6140 \cdot 10^{-4}$\!\!  &  $0.1643$ 
    \\\hline
     \!\!New England 39\!\! &  \!\tt{always true}\! &   \!\!$6.6355 \cdot 10^{-5}$\!\! &  $0.16821$ 
    \\\hline
     \!\!IEEE 57 bus system\!\! &  \!\tt{always true}\! &   \!\!$2.0630 \cdot 10^{-2}$\!\!   &  $0.20295$ 
    \\\hline
     \!\!IEEE RTS 96\!\! &  \!\tt{always true}\! &  \!\!$2.6076 \cdot 10^{-3}$\!\!  &   $0.24593$ 
    \\\hline
     \!\!IEEE 118 bus system\!\! &  \!\tt{always true}\! &  \!\!$5.9959 \cdot 10^{-4}$\!\!  &   $0.23524$ 
    \\\hline
     \!\!IEEE 300 bus system\!\! &  \!\tt{always true}\! &  \!\!$5.2618 \cdot 10^{-4}$\!\!  &   $0.43204$ 
    \\\hline
    \!\!Polish 2383 bus\!\! &  \!\tt{always true}\! &  \!\!$4.2183 \cdot 10^{-3}$\!\!  &   $0.25144$ 
    \\ \!\!\!system (winter 99)\!\! & & &
    \\\hline
  \end{tabular}
         \tablenote{}{The accuracy and phase cohesiveness results in the third and fourth column are given in the unit $[\textup{rad}]$, and they are averaged over 1000 instances of randomized load and generation.}
\end{table}

\begin{acknowledgments}
This material is based in part upon work supported by NSF grants IIS-0904501 and CPS-1135819. Research at LANL was carried out under the auspices of the National Nuclear Security Administration of the U.S. Department of Energy at Los Alamos National Laboratory under Contract No. DE C52-06NA25396.
\end{acknowledgments}





%


\end{article}








\begin{article}
\newpage
\setcounter{equation}{0}
{\bf \Large Supplementary Information}\\

\section{Introduction}
\label{Section: Introduction}

This supplementary information is organized as follows.

The section {\em Mathematical Models and Synchronization Notions} provides a description of the considered coupled oscillator model including a detailed modeling of a mechanical analog and a few power network models.
Furthermore, we state our definition of synchronization and compare various synchronization conditions proposed for oscillator networks.

The section {\em Mathematical Analysis of Synchronization} provides a rigorous mathematical analysis of synchronization, which leads to the novel synchronization conditions proposed in the main article. Throughout our analysis we provide various examples illustrating certain theoretical concepts and results, and we also compare our results to existing results in the synchronization and power networks literature.

The section {\em Statistical Synchronization Assessment} provides a detailed account of our Monte Carlo simulation studies and the complex Kuramoto network studies. Throughout this section, we also recall the basics of probability estimation by Monte Carlo methods that allow us to establish a statistical synchronization result in a mathematically rigorous way.

Finally, the section {\em Synchronization Assessment for Power Networks} describes the detailed simulation setup for the randomized IEEE test systems, it provides the simulation data used for the dynamic IEEE RTS 96 power network simulations, and it illustrates a dynamic bifurcation scenario in the IEEE RTS 96 power network.

The remainder of this section introduces some notation and recalls some preliminaries.

\subsection{Preliminaries and Notation}

{\it Vectors and functions:}  
Let $\fvec 1_{n}$ and $\fvec 0_{n}$ be the $n$-dimensional vector of unit and zero entries, and let $\fvec 1_n^\perp$ be the orthogonal complement of $\fvec 1_{n}$ in $\mbb R^{n}$, that is, $\fvec 1_n^\perp \triangleq \{ x \in \mbb R^{n} :\, x \perp \fvec 1_{n}\}$. Let $e^{n}_{i}$ be $i$th canonical basis vector of $\mathbb R^{n}$, that is, the $i$th entry of $e^{n}_{i}$ is 1 and all other entries are zero. 
Given an $n$-tuple $(x_{1},\dots,x_{n})$, let $x \in \real^{n}$ be the associated vector. For an ordered index set $\mc I$ of cardinality $|\mc I|$ and an one-dimensional array $\{x_{i}\}_{i \in \mc I}$, we define $\diag(\{c_{i}\}_{i \in \mc I}) \in \mbb R^{|\mc I| \times |\mc I|}$ to be the associated diagonal matrix. 
%
For $x \in \mathbb R^{n}$, define the vector-valued functions $\sinbf(x) = (\sin(x_{1}),\dots,\sin(x_{n}))$ and $\arcsinbf(x)=(\arcsin(x_{1}),\dots,\arcsin(x_{n}))$, where the $\arcsin$ function is defined for the branch ${[-\pi/2,\pi/2]}$.
For a set $\mc X \subset \real^{n}$ and a matrix $A \in \real^{m \times n}$, let $A \mc X  = \{ y \in \mathbb R^{m}:\, y = Ax \,,\, x \in \mc X\}$.

{\it Geometry on $n$-torus:} The set $\mbb S^{1}$ denotes the {\em unit circle},
an {\it angle} is a point $\theta \in \mbb S^{1}$, and an {\it arc} is a
connected subset of $\mbb S^{1}$. 
The {\em geodesic distance} between two angles $\theta_{1}$, $\theta_{2} \in \mbb S^{1}$ is the minimum of the counter-clockwise and the clockwise arc length connecting $\theta_{1}$ and $\theta_{2}$.
With slight abuse of notation, let $|\theta_{1}-\theta_{2}|$ denote the {\it geodesic distance} between two
angles $\theta_{1},\theta_{2} \in \mbb S^{1}$. 
Finally, the {\em $n$-torus} is the product set $\mbb T^{n} = \mbb S^{1} \times  \dots \times \mbb S^{1}$ is the
direct sum of $n$ unit circles.  

{\it Algebraic graph theory:}  Given an undirected, connected, and weighted graph $G(\mc V,\mc E,A)$ induced by the symmetric, irreducible, and nonnegative {\em adjacency matrix} $A \in \mathbb R^{n \times n}$, the {\em Laplacian matrix} $L \in \real^{n \times n}$ is defined by $L  = \diag(\{\sum_{j=1}^{n} a_{ij} \}_{i =1}^{n}) - A$. 
If a number $\ell \in \until {|\mc E|}$ and an arbitrary direction is
assigned to each edge $\{i,j\} \in \mc E$, the (oriented) {\em incidence
  matrix} $B\in \real^{n \times |\mc E|}$ is defined component-wise as
$B_{k\ell} = 1$ if node $k$ is the sink node of edge ${\ell}$ and as
$B_{k\ell} = -1$ if node $k$ is the source node of edge ${\ell}$; all other
elements are zero.  For $x \in \real^{n}$, the vector $B^{T}x$ has
components $x_{i} - x_{j}$ for any oriented edge from $j$ to $i$, that is, $B^{T}$ maps node variables $x_{i}$, $x_{j}$ to incremental edge variables $x_{i}-x_{j}$. 
If $\diag(\{a_{ij}\}_{\{i,j\} \in \mc E})$ is the diagonal matrix of nonzero edge
weights, then $L = B \diag(\{a_{ij} \}_{\{i,j\} \in \mc E}) B^{T}$.
For a vector $x \in \mathbb R^{n}$, the incremental norm $\|x\|_{\mc E,\infty} \triangleq \max_{\{i,j\} \in \mc E}$ used in the main article, can be expressed via the incidence matrix $B$ as $\|x\|_{\mc E,\infty} = \| B^{T} x \|_{\infty}$.
If the graph is connected, then $\Ker(B^{T}) = \Ker(L) = \mathrm{span}(\fvec 1_{n})$, all $n-1$ remaining eigenvalues of $L$ are real and strictly positive, and the second-smallest eigenvalue $\lambda_{2}(L)$ is called the {\it algebraic connectivity}. The orthogonal vector spaces $\Ker(B)$ and $\Ker(B)^{\perp} = \Img(B^{T})$ are spanned by vectors associated to cycles and cut-sets in the graph 
, see for example \citesec[Section 4]{NB:94} or \citesec{NB:97}. 
In the following, we refer to $\Ker(B)$ and $\Img(B^{T})$ as the {\em cycle space} and the {\em cut-set space}, respectively. 

{\it Laplacian inverses:}
Since the Laplacian matrix $L$ is singular, we will frequently use its {\em
  Moore-Penrose pseudo inverse} $L^{\dagger}$. If $U \in \real^{n \times
  n}$ is an orthonormal matrix of eigenvectors of $L$, the singular value decomposition of $L$ is
  $L = U \diag(\{0,\lambda_{2},\dots,\lambda_{n}\}) U^{T}$, and its Moore-Penrose pseudo inverse $L^{\dagger}$ is given by 
  $ L^{\dagger} = U \diag(\{0,1/\lambda_{2},\dots,1/\lambda_{n}\}) U^{T} $.
  We will frequently use the identity $L \cdot L^{\dagger} = L^{\dagger} \cdot L = I_{n} - \frac{1}{n}
\fvec 1_{n \times n}$, which follows directly from the singular value
decomposition. We also define the {\em effective resistance} between nodes
$i$ and $j$ by $R_{ij} = L^{\dagger}_{ii} + L^{\dagger}_{jj} - 2
L^{\dagger}_{ij}$. We refer to~\citesec{FD-FB:11d} for further information on Laplacian inverses and on the resistance~distance.

\section{Mathematical Models and Synchronization Notions}
\label{Section: Mathematical Models and Synchronization Notions}

In this section we introduce the mathematical model of coupled phase oscillators considered in this article, we present some synchronization notions, and give a detailed account of the literature on synchronization of coupled phase oscillators.

\subsection{General Coupled Oscillator Model} 

Consider a weighted, undirected, and connected graph $G = (\mc V,\mc E,A)$ with $n$ nodes $\mc V = \until n$, partitioned node set $\mc V = \mc V_{1} \cup \mc V_{2}$ and edge set  $\mc E$ induced by the adjacency matrix $A \in \mathbb R^{n \times n}$. We assume that the graph $G$ has no self-loops $\{i,i\}$, that is, $a_{ii} = 0$ for all $i \in \mc V$. Associated to this graph, consider the following model of $|\mc V_{1}| \geq 0$ second-order Newtonian and $|\mc V_{2}|  \geq 0$ first-order kinematic phase oscillators
\begin{align}
	\begin{split}
	M_{i} \ddot \theta_{i} + D_{i} \dot \theta_{i}
	&=
	\omega_{i} - \sum\nolimits_{j=1}^{n} a_{ij} \sin(\theta_{i}-\theta_{j})
	\,,\qquad i \in \mc V_{1},
	\\
	D_{i} \dot \theta_{i}
	&=
	\omega_{i} - \sum\nolimits_{j=1}^{n} a_{ij} \sin(\theta_{i}-\theta_{j})
	\,,\qquad i \in \mc V_{2},
	\end{split}
	\label{eq-SI: coupled oscillator model}
\end{align}
where $\theta_{i} \in \mathbb S^{1}$ and $\dot \theta_{i} \in \mathbb R^{1}$ are the phase and frequency of oscillator $i \in \mc V$, $\omega_{i} \in \mathbb R^{1}$ and $D_{i} > 0$ are the natural frequency and damping coefficient of oscillator $i \in \mc V$, and $M_{i} > 0$ is inertial constant of oscillator $i \in \mc V_{1}$. 
The coupled oscillator model \eqref{eq-SI: coupled oscillator model} evolves on $\mathbb T^{n} \times \mathbb R^{|\mc V_{1}|}$, and features an important symmetry, namely the rotational invariance of the angular variable~$\theta$. 
The interesting dynamics of the coupled oscillator model \eqref{eq-SI: coupled oscillator model} arises from a competition between each oscillator's tendency to align with its natural frequency $\omega_{i}$ and the synchronization-enforcing coupling $a_{ij} \sin(\theta_{i} - \theta_{j})$ with its neighbors.

As discussed in the main article, the coupled oscillator model \eqref{eq-SI: coupled oscillator model} unifies various models proposed in the literature. For example, for  the parameters $\mc V_{1} = \emptyset$ and $D_{i} = 1$ for all $i \in\mc V_{2}$, it reduces to the celebrated {\em Kuramoto model} \citesec{YK:75,YK:84}
\begin{equation}
	\dot \theta_{i}
	=
	\omega_{i} - \sum\nolimits_{j=1}^{n} a_{ij} \sin(\theta_{i}-\theta_{j})
	\,,\;\;\; i \in \until n 
	\label{eq-SI: Kuramoto model}
	\,.
\end{equation}
We refer to the review articles \citesec{SHS:00,JAA-LLB-CJPV-FR-RS:05,ATW:01,FD-FB:10w,FD-FB:12i} for various theoretic results on the Kuramoto model \eqref{eq-SI: Kuramoto model} and further synchronization applications in natural sciences, technology, and social networks.
Here, we present a detailed modeling of the spring oscillator network used as a mechanical analog in the main article, and we present a few power network models, which can be described by the coupled oscillator model \eqref{eq-SI: coupled oscillator model}.

\subsection{Mechanical Spring Network}

Consider the spring network illustrated in Figure \ref{Fig: Mechanical Analog}
consisting of a group of $n$ particles constrained to rotate around a
circle with unit radius. For simplicity, we assume that the particles are allowed to move
freely on the circle and exchange their order without collisions. 

Each
particle is characterized by its phase angle $\theta_{i} \in \mathbb S^{1}$
and frequency $\dot \theta_{i} \in \real$, and its inertial and damping
coefficients are $M_{i}>0$ and $D_{i}>0$.
The external forces and torques acting on each particle are (i) a viscous
damping force $D_{i} \dot \theta_{i}$ opposing the direction of motion,
(ii) a non-conservative force $\omega_{i} \in \real$ along the direction of
motion depicting a preferred natural rotation frequency, and (iii) an
elastic restoring torque between interacting particles $i$ and $j$ coupled
by an ideal elastic spring with stiffness $a_{ij} > 0$ and zero rest
length. The topology of the spring network is described by the weighted, undirected, and connected graph $G = (\mc V,\mc E,A)$.

To compute the elastic torque between the particles, we parametrize the
position of each particle $i$ by the unit vector $p_{i} = \left[
  \cos(\theta_{i}) \,,\, \sin(\theta_{i}) \right]^{T} \in \mathbb S^{1}
\subset \real^{2}$. The elastic Hookean energy stored in the springs is the
function $E:\, \torus^{n} \to \real$ given up to an additive constant by
\begin{align*}
 E(\theta) 
 &= \sum\nolimits_{\{i,j\} \in \mc E} \frac{a_{ij}}{2}  \| p_{i} - p_{j} \|_{2}^{2} \\
 &= \sum\nolimits_{\{i,j\} \in \mc E} a_{ij}  \bigl( 1 -\cos(\theta_{i})\cos(\theta_{j}) - \sin(\theta_{i})\sin(\theta_{j}) \bigr) \\
 &= \sum\nolimits_{\{i,j\} \in \mc E} a_{ij}  \bigl( 1 - \cos(\theta_{i} - \theta_{j})\bigr)
 \,,
\end{align*}
where we employed the trigonometric identity $\cos(\alpha - \beta) = \cos
\alpha \cos \beta + \sin \alpha \sin \beta$ in the last equality. Hence, we
obtain the restoring torque acting on particle $i$ as
\begin{equation*}
 T_{i}(\theta) = - \frac{\partial}{\partial\theta_{i}} \, E(\theta) = - \sum\nolimits_{j=1}^{n} a_{ij} \sin(\theta_{i} - \theta_{j})
 \,.
\end{equation*}
Therefore, the network of spring-interconnected particles depicted in
Figure \ref{Fig: Mechanical Analog} obeys the dynamics
\begin{equation}
 M_i \ddot{\theta}_i + D_i \dot{\theta}_i = \omega_{i} - \sum\nolimits_{j=1}^{n} a_{ij}\sin(\theta_i-\theta_j)
\,,\;\; i \in \until n.
 \label{eq-SI: spring network}
\end{equation}
In conclusion, the spring network in Figure~\ref{Fig: Mechanical Analog} is a mechanical analog of the 
 coupled oscillator model \eqref{eq-SI: coupled oscillator model} with	~$\mc V_{2} = \emptyset$.
\begin{figure}[htbp]
	\centering{
	\includegraphics[width=0.4\columnwidth]{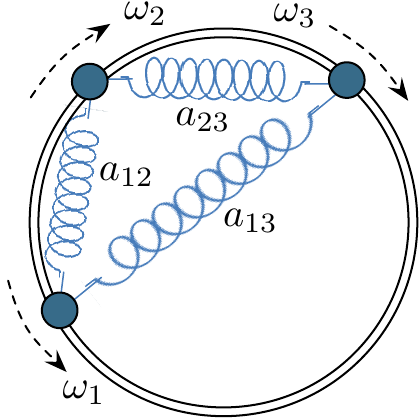}
	\caption{Mechanical analog of the coupled oscillator model \eqref{eq-SI: coupled oscillator model}.}
	\label{Fig: Mechanical Analog}
	}
\end{figure}

\subsection{Power Network Model}

The coupled oscillator model \eqref{eq-SI: coupled oscillator model} includes also a variety of power network models.
We briefly present different power network models compatible with the coupled oscillator model \eqref{eq-SI: coupled oscillator model} and refer to \citesec[Chapter 7]{PWS-MAP:98} for a detailed derivation from a higher order first principle model. 

Consider a connected power network with generators $\mc V_{1}$ and load
buses $\mc V_{2}$. The network is described by the symmetric nodal
admittance matrix $Y \in \mbb C^{n \times n}$ (augmented with the generator
transient reactances). If the network is lossless and the voltage levels
$|V_{i}|$ at all nodes $i \in \mc V_{1} \cup \mc V_{2}$ are constant, then
the {\it maximum real power transfer} between any two nodes $i,j \in \mc
V_{1} \cup \mc V_{2}$ is $a_{ij} = |V_{i}| \cdot |V_{j}| \cdot
\Im(Y_{ij})$, where $\Im(Y_{ij})$ denotes the susceptance of the transmission line $\{i,j\} \in \mc E$. With this notation the
swing dynamics of generator $i$ are given by
\begin{equation}
	M_{i} \ddot \theta_{i} + D_{i} \dot \theta_{i}
	\!=\! 
	P_{\textup{m},i} - \sum\nolimits_{j=1}^{n} a_{ij} \sin(\theta_{i} - \theta_{j})
	\,,\;\;\;  i \in \mc V_{1} ,
	\label{eq-SI: generator dynamics}
\end{equation}
where $\theta_{i} \in \mbb S^{1}$ and $\dot \theta_{i} \in \real^{1}$ are the generator rotor angle and frequency, $\theta_{j} \in \mbb S^{1}$ for $j \in \mc V_{2}$ are the voltage phase angles at the load buses, and $P_{\textup{m},i} > 0$, $M_{i} > 0$, and $D_{i} > 0$ are the mechanical power input from the prime mover, the generator inertia constant, and the damping coefficient. 

For the load buses $\mc V_{2}$, we consider the following three load models illustrated in Figure \ref{Fig: load models}.

{\em 1) PV buses with frequency-dependent loads:} All load buses are $PV$ {\em buses}, that is, the active power demand $P_{\textup{l},i}$ and the voltage magnitude $|V_{i}|$ are specified for each bus. The real power drawn by load $i$ consists of a constant term $P_{\textup{l},i} > 0$
and a frequency dependent term $D_{i} \dot \theta_{i}$ with $D_{i}>0$, as illustrated in Figure \ref{Fig: load models}(a). The
resulting real power balance equation is
\begin{equation}
	D_{i} \dot \theta_{i} + P_{\textup{l},i}
	=
	- \sum\nolimits_{j=1}^{n} a_{ij} \sin(\theta_{i} - \theta_{j})
	\,,\;\;\; i \in \mc V_{2} \,.
	\label{eq-SI: bus dynamics}
\end{equation}
The dynamics \eqref{eq-SI: generator dynamics}-\eqref{eq-SI: bus dynamics} are known as {\em structure-preserving power network model} \citesec{ARB-DJH:81}, and equal the coupled oscillator model \eqref{eq-SI: coupled oscillator model} for $\omega_{i} = P_{\textup{m},i}$, $i \in \mc V_{1}$, and $\omega_{i} = -P_{\textup{l},i}$, $i \in \mc V_{2}$.

{\em 2) PV buses with constant power loads:}
All load buses are $PV$ {\em buses}, each load features a constant real power demand $P_{\textup{l},i} > 0$, and the load damping in \eqref{eq-SI: bus dynamics} is neglected, that is, $D_{i}=0$ in equation \eqref{eq-SI: bus dynamics}. The corresponding circuit-theoretic model is shown in Figure \ref{Fig: load models}(b). If the angular distances $|\theta_{i}(t) - \theta_{j}(t)| < \pi/2$ are bounded for each transmission line $\{i,j\} \in \mc E$ (this condition will be precisely established in the next section), then the resulting differential-algebraic system has the same local stability properties as the dynamics \eqref{eq-SI: generator dynamics}-\eqref{eq-SI: bus dynamics}, see \citesec{SS-PV:80}. Hence, all of our results apply locally also to the structure-preserving power network model \eqref{eq-SI: generator dynamics}-\eqref{eq-SI: bus dynamics} with zero load damping $D_{i}=0$ for $i \in \mc V_{2}$.

{\em 3) Constant current and constant admittance loads:}
If each load $i \in \mc V_{2}$ is modeled as a constant current demand $I_{i}$ and an (inductive) admittance $Y_{i,\textup{shunt}}$ to ground as illustrated in Figure \ref{Fig: load models}(c), then the linear current-balance equations are $I = Y V$, where $I \in \mathbb C^{n}$ and $V \in \mathbb C^{n}$ are the vectors of nodal current injections and voltages. After elimination of the bus variables $V_{i}$, $i \in \mc V_{2}$, through Kron reduction \citesec{FD-FB:11d}, the resulting dynamics assume the form \eqref{eq-SI: spring network} known as the (lossless) {\em network-reduced power system model} \citesec{HDC-CCC-GC:95,FD-FB:09z}. We refer to \citesec{PWS-MAP:98,FD-FB:11d} for a detailed derivation of the network-reduced model.

\begin{figure}[htbp]
	\centering{
	\includegraphics[width = 0.99\columnwidth]{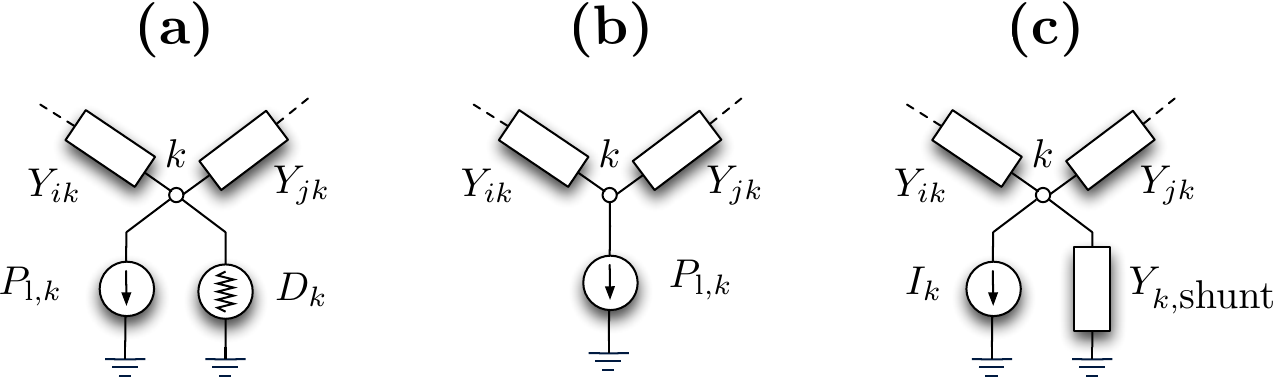}
	\caption{Equivalent circuits of the frequency-dependent load model (a), the constant power load model (b), and the constant current and admittance load model~(c).}
	\label{Fig: load models}
	}
\end{figure}

To conclude this paragraph on power network modeling, we remark that a first-principle modeling of a DC power source connected to an AC grid via a droop-controlled  inverter results also in equation \eqref{eq-SI: bus dynamics}; see \citesec{JWSP-FD-FB:12j} for further details.


\subsection{Synchronization Notions} 

The following subsets of the $n$-torus $\mathbb T^{n}$ are essential for the synchronization problem:
For $\gamma\in{[0,\pi/2[}$, let $\bar\Delta_{G}(\gamma) \subset \mbb T^{n}$
be the closed set of angle arrays $(\theta_1,\dots,\theta_n)$ with the
property $|\theta_{i} - \theta_{j}| \leq \gamma$ for $\{i,j\} \in \mc
E$. Also, let $\Delta_{G}(\gamma)$ be the interior of
$\bar\Delta_{G}(\gamma)$.

\begin{definition}\label{Definition: sync}
A solution $\map{(\theta,\dot \theta)}{\real_{\geq0}}{(\mbb T^{n},\mbb
  R^{|\mc V_{1}|})}$ to the coupled oscillator model \eqref{eq-SI: coupled
  oscillator model} is said to be {\em synchronized} if $\theta(0) \in
\bar\Delta_{G}(\gamma)$  and there exists $\subscr{\omega}{sync}\in\real^{1}$ such
that $\theta(t) = \theta(0) + \subscr{\omega}{sync}\fvec 1_n t \pmod{2\pi}$
and $\dot{\theta}(t) =\subscr{\omega}{sync}\fvec 1_{|\mc V_{1}|}$ for all
$t \geq 0$.
\end{definition}
 In other words, here, synchronized trajectories have the
properties of {\em frequency synchronization} and {\em phase cohesiveness},
that is, all oscillators rotate with the same synchronization frequency
$\subscr{\omega}{sync}$ and all their phases belong to the set~$\bar\Delta_{G}(\gamma)$.
For a power network model \eqref{eq-SI: generator dynamics}-\eqref{eq-SI: bus dynamics}, the notion of phase cohesiveness is equivalent to bounded flows $| a_{ij} \sin(\theta_{i}-\theta_{j})| \leq a_{ij} \sin(\gamma)$ for all transmission lines $\{i,j\} \in \mc E$.

For the coupled oscillator model \eqref{eq-SI: coupled oscillator model}, the explicit synchronization frequency is given by
$\subscr{\omega}{sync} \triangleq \sum_{i=1}^{n} \omega_{i}/\sum_{i=1}^{n}
D_{i}$, see \citesec{FD-FB:10w} for a detailed derivation. By transforming to a rotating frame with
frequency $\subscr{\omega}{sync}$ and by replacing $\omega_{i}$ by
$\omega_{i} - D_{i} \subscr{\omega}{sync}$, we obtain
$\subscr{\omega}{sync} = 0$ (or equivalently $\omega \in \fvec 1_n^\perp$)
corresponding to balanced power injections $\sum_{i \in \mc V_{1}} P_{\textup{m},i} = \sum_{i \in \mc V_{2}} P_{\textup{l},i}$
in power network applications. Hence, without loss of generality, we assume that $\omega \in
\fvec 1_n^\perp$ such that $\subscr{\omega}{sync} = 0$.

Given a point $r\in\mycircle$ and an angle $s\in[0,2\pi]$, let
$\rot_s(r)\in\mycircle$ be the rotation of $r$ counterclockwise by the
angle $s$.  For $(r_{1},\dots,r_{n}) \in \torus^n $, define the equivalence
class
\begin{equation*}
  [(r_1,\dots,r_n)] = \setdef{ ( \rot_s(r_1), \dots, \rot_s(r_n) \in \mbb
    T^n }{s\in[0,2\pi]}.
\end{equation*}
Clearly, if $(r_1,\dots,r_n)\in\bar\Delta_G(\gamma)$,  then
$[(r_1,\dots,r_n)]\subset\bar\Delta_G(\gamma)$.
\begin{definition}
Given $\theta\in\bar\Delta_G(\gamma)$ for some $\gamma\in{[0,\pi/2[}$, the
set $([\theta], \fvec 0_{|\mc V_{1}|}) \subset \mbb
T^{n} \times \real^{|\mc V_{1}|}$ is a {\em synchronization manifold} of
the coupled oscillator model~\eqref{eq-SI: coupled oscillator model}.  
\end{definition}
Note that a synchronized solution takes value in a synchronization manifold due to rotational symmetry.
For two first-order oscillators \eqref{eq-SI: Kuramoto model} the state space $\torus^{2}$, the set $\Delta_{G}(\pi/2)$, as well as the synchronization manifold $[\theta^{*}]$ associated to an angle array $\theta^{*} = (\theta_{1}^{*},\theta_{2}^{*}) \in \torus^{2}$ are illustrated in Figure \ref{Fig: sync manifold}.

\begin{figure}[htbp]
	\centering{
	\includegraphics[width = 0.83\columnwidth]{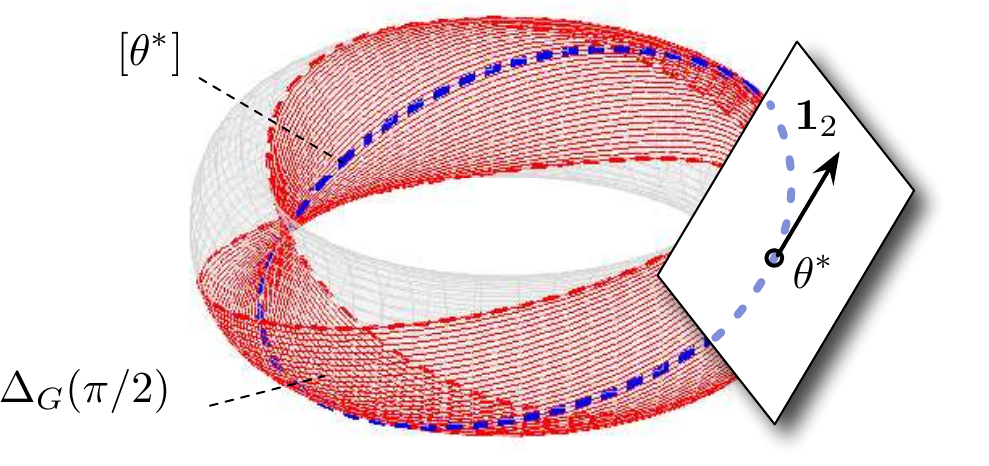}
	\caption{Illustration of the state space $\torus^{2}$, the set $\Delta_{G}(\pi/2)$, the synchronization manifold $[\theta^{*}]$ associated to a point $\theta^{*} = (\theta_{1}^{*},\theta_{2}^{*}) \in \Delta_{G}(\pi/2)$, the tangent space at $\theta^{*}$, and the translation vector\,$\fvec 1_{2}$.}
	\label{Fig: sync manifold}
	}
\end{figure}

\subsection{Existing Synchronization Conditions} 

The coupled oscillator dynamics \eqref{eq-SI: coupled oscillator model}, and
the Kuramoto dynamics \eqref{eq-SI: Kuramoto model} for that matter, feature
(i) the synchronizing effect of the coupling described by the weighted edges of the graph $G(\mc
V,\mc E,A)$ and (ii) the de-synchronizing effect of the dissimilar natural
frequencies $\omega \in \fvec 1_{n}^{\perp}$ at the nodes.  Loosely speaking, synchronization occurs when the
coupling dominates the dissimilarity.  Various conditions are proposed in
the power systems and synchronization literature to quantify this tradeoff between coupling and dissimilarity.
The coupling is typically quantified by the algebraic connectivity $\lambda_{2}(L)$ \citesec{FFW-SK:80,FD-FB:09z,AJ-NM-MB:04,TN-AEM-YCL-FCH:03,AA-ADG-JK-YM-CZ:08,SB-VL-YM-MC-DUH:06} or the weighted nodal degree $\textup{deg}_{i} \triangleq \sum\nolimits_{j=1}^{n} a_{ij}$ \citesec{FW-SK:82,FD-FB:11d,GK-MBH-KEB-MJB-BK-DA:06,FD-FB:09z,LB-LS-ADG:09}, and the dissimilarity is quantified by either absolute norms $\| \omega \|_{p}$ or incremental (relative) norms $\| B^{T} \omega \|_{p}$, where typically $p \in \{2,\infty\}$. Sometimes, these conditions can be evaluated only
numerically since they are state-dependent \citesec{FFW-SK:80,FW-SK:82} or
arise from a non-trivial linearization process, such as the Master stability function formalism
\citesec{AA-ADG-JK-YM-CZ:08,SB-VL-YM-MC-DUH:06,LMP-TLC:98}. In general, concise and
accurate results are only known for specific topologies such as complete
graphs \citesec{FD-FB:10w,MV-OM:08} linear chains \citesec{SHS-REM:88,NK-GBE:88} and complete
bipartite graphs \citesec{MV-OM:09} with uniform weights. 

For arbitrary coupling topologies only sufficient conditions are known \citesec{FFW-SK:80,FD-FB:09z,AJ-NM-MB:04,FW-SK:82} as well as numerical investigations for random networks \citesec{JGG-YM-AA:07,TN-AEM-YCL-FCH:03,YM-AFP:04,ACK:10}. To best of our knowledge, the sharpest and provably correct synchronization conditions for arbitrary topologies assume the form $\lambda_{2}(L) > \left(\sum_{i<j} | \omega_{i} - \omega_{j}|^{2}\right)^{1/2}$, see \citesec[Theorem 4.4]{FD-FB:09z}.
For arbitrary undirected, connected, and weighted, graphs $G(\mc V,\mc E,A)$, simulation studies indicate that the known sufficient conditions \citesec{FFW-SK:80,FD-FB:09z,AJ-NM-MB:04,FW-SK:82} are conservative estimates on the threshold from incoherence to synchrony, and every review article on synchronization concludes with the open problem of finding sharp synchronization conditions \citesec{JAA-LLB-CJPV-FR-RS:05,FD-FB:10w,SHS:00,AA-ADG-JK-YM-CZ:08,SB-VL-YM-MC-DUH:06,SHS:01}.

\section{Mathematical Analysis of Synchronization}
\label{Section: Mathematical Analysis of Synchronization}

This section presents our analysis of the synchronization problem in the coupled oscillator model\,\eqref{eq-SI: coupled oscillator model}.

\subsection{An Algebraic Approach to Synchronization}

Here we present a novel analysis approach that reduces the synchronization problem to an equivalent algebraic problem that reveals the crucial role of cycles and cut-sets in the graph topology. 
In a first analysis step, we reduce the synchronization problem for the coupled oscillator model \eqref{eq-SI: coupled oscillator model} to a simpler problem, namely stability of a first-order model. It turns out that existence and local exponential stability of synchronized solutions of the coupled oscillator model \eqref{eq-SI: coupled oscillator model} can be entirely described by means of the first-order Kuramoto model~\eqref{eq-SI: Kuramoto model}.

\smallskip
\begin{lemma}
{\bf (Synchronization equivalence)}
\label{Lemma: Local Equivalence of First and Second Order Synchronization}
Consider the coupled oscillator model \eqref{eq-SI: coupled oscillator model}
and the Kuramoto model \eqref{eq-SI: Kuramoto model}. The following statements
are equivalent for any $\gamma \in {[0,\pi/2[}$ and any synchronization
    manifold $([\theta], \fvec 0_{|\mc V_{1}|}) \subset \bar\Delta_{G}(\gamma) \times \real^{|\mc V_{1}|}$.
\begin{enumerate}

	\item[(i)] $[\theta]$ is a locally exponentially stable synchronization manifold the Kuramoto model \eqref{eq-SI: Kuramoto model}; and

	\item[(ii)] $([\theta], \fvec 0_{|\mc V_{1}|})$ is a locally exponentially stable synchronization manifold of the coupled oscillator model \eqref{eq-SI: coupled oscillator model}.
	
\end{enumerate}
If the equivalent statements (i) and (ii) are true, then, locally near their respective synchronization manifolds, the coupled oscillator model \eqref{eq-SI: coupled oscillator model} and the Kuramoto model \eqref{eq-SI: Kuramoto model} together with the frequency dynamics $\dt \dot \theta = - M^{-1}D \dot \theta$ are topologically conjugate.
\end{lemma}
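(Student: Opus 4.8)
\emph{Proof strategy.} The plan is to reduce the asserted equivalence to a spectral statement about the Jacobian of \eqref{eq-SI: coupled oscillator model}, and then to deduce the topological conjugacy from the Hartman--Grobman theorem after factoring out the rotational symmetry. As explained above, we may pass to a rotating frame in which $\omega\in\fvec 1_n^\perp$ and $\subscr{\omega}{sync}=0$. In this frame, using the rotational invariance of \eqref{eq-SI: coupled oscillator model}, a synchronization manifold $([\theta],\fvec 0_{|\mc V_1|})\subset\bar\Delta_G(\gamma)\times\real^{|\mc V_1|}$ is invariant under \eqref{eq-SI: coupled oscillator model} if and only if a representative $\theta^*$ satisfies $\omega_i=\sum_{j}a_{ij}\sin(\theta_i^*-\theta_j^*)$ for every $i$, i.e. if and only if $[\theta^*]$ is an equilibrium orbit of the Kuramoto model \eqref{eq-SI: Kuramoto model}. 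Thus the two models share the same candidate synchronization manifolds, and it remains to (a) compare the local exponential stability of such a manifold in the two models, and (b) construct the conjugacy.

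For (a), linearize at a representative $\theta^*\in\bar\Delta_G(\gamma)$ and set $\mc L\triangleq B\diag\bigl(\{a_{ij}\cos(\theta_i^*-\theta_j^*)\}_{\{i,j\}\in\mc E}\bigr)B^{T}$, where $B$ is the incidence matrix. The decisive point is that $|\theta_i^*-\theta_j^*|\le\gamma<\pi/2$ on every edge makes all weights $a_{ij}\cos(\theta_i^*-\theta_j^*)$ strictly positive, so $\mc L$ is the Laplacian of the connected graph $G$: it is positive semidefinite with $\Ker(\mc L)=\mathrm{span}(\fvec 1_n)$. The Jacobian of \eqref{eq-SI: Kuramoto model} at $\theta^*$ is then $-D^{-1}\mc L$ with $D=\diag(\{D_i\}_{i=1}^n)$ (equal to $-\mc L$ in the normalization of \eqref{eq-SI: Kuramoto model}), which is similar to $-D^{-1/2}\mc L D^{-1/2}$ and hence has a simple zero eigenvalue and all other $n-1$ eigenvalues in the open left half-plane; so a Kuramoto equilibrium in $\bar\Delta_G(\gamma)$ is automatically locally exponentially stable along $[\theta^*]$. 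Ordering the state of \eqref{eq-SI: coupled oscillator model} as $(\theta_{\mc V_1},\dot\theta_{\mc V_1},\theta_{\mc V_2})$ and partitioning $\mc L$, $M=\diag(\{M_i\}_{i\in\mc V_1})$, $D=\diag(D_1,D_2)$ accordingly, its Jacobian $J$ at $(\theta^*,\fvec 0)$ has first block-row $(0,\,I,\,0)$, second block-row $(-M^{-1}\mc L_{11},\,-M^{-1}D_1,\,-M^{-1}\mc L_{12})$, and third block-row $(-D_2^{-1}\mc L_{21},\,0,\,-D_2^{-1}\mc L_{22})$.

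The heart of the proof --- and the step I expect to require the most care --- is the spectral analysis of $J$. A direct elimination shows that $\lambda$ is an eigenvalue of $J$ with eigenvector $(v_1,w,v_2)$ exactly when $w=\lambda v_1$ and $\bigl(\lambda^2\hat M+\lambda D+\mc L\bigr)v=\fvec 0$, where $v=(v_1,v_2)$ and $\hat M\triangleq\diag(M,\fvec 0_{|\mc V_2|\times|\mc V_2|})\succeq 0$ is the singular mass matrix. Pairing this polynomial eigenvalue equation with $v^{*}$ (conjugate transpose) yields the real quadratic $a\lambda^2+b\lambda+c=0$ with $a=v^{*}\hat M v\ge 0$, $b=v^{*}Dv>0$, $c=v^{*}\mc L v\ge 0$; a Routh--Hurwitz inspection of this quadratic forces $\mathrm{Re}(\lambda)\le 0$, with $\mathrm{Re}(\lambda)=0$ possible only when $\lambda=0$ and $\mc L v=\fvec 0$, i.e. $v\in\mathrm{span}(\fvec 1_n)$. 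It then remains to check that $\lambda=0$ is a \emph{simple} eigenvalue of $J$: its kernel is exactly $\mathrm{span}\{(\fvec 1_{|\mc V_1|},\fvec 0,\fvec 1_{|\mc V_2|})\}$, the tangent to the rotation orbit, and $(\fvec 1_{|\mc V_1|},\fvec 0,\fvec 1_{|\mc V_2|})$ is not in $\range(J)$, since a preimage would require solving $\mc L u=-D\fvec 1_n$, which is impossible because $-D\fvec 1_n$ would have to be orthogonal to $\Ker(\mc L)=\mathrm{span}(\fvec 1_n)$ whereas $\fvec 1_n^{T}D\fvec 1_n=\sum_i D_i>0$. Hence $J$ has a single simple zero eigenvalue, with eigenvector tangent to the one-dimensional equilibrium manifold, and all remaining $n+|\mc V_1|-1$ eigenvalues in the open left half-plane. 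By the reduction principle for this normally hyperbolic equilibrium manifold, this is precisely local exponential stability of $([\theta^*],\fvec 0)$. Combined with the Kuramoto computation, statements (i) and (ii) are each equivalent to ``$\theta^*$ is an equilibrium of the respective model,'' and these conditions coincide, so (i)$\iff$(ii).

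For (b), exploit the free $\mbb S^1$-action. Near $([\theta^*],\fvec 0)$ a neighbourhood splits equivariantly as $\mbb S^1\times N$ with $N$ a slice transverse to the group orbits, and by invariance the vector field in these coordinates is a skew product $(\dot x,\dot s)=(f(x),g(x))$ with $g$ independent of $s$, $g(0)=0$, and $x=0$ a hyperbolic sink of $\dot x=f(x)$ whose linearization is the transverse part of $J$. Solving $\nabla h(x)\cdot f(x)=g(x)$ with $h(0)=0$ (well defined near the sink since $g(x(t))\to 0$ exponentially), the substitution $\tilde s=s-h(x)$ turns the flow into the genuine product $(\dot x,\dot{\tilde s})=(f(x),0)$. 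Running the identical reduction on \eqref{eq-SI: Kuramoto model} together with $\dt\dot\theta=-M^{-1}D\dot\theta$ yields a product flow whose nontrivial factor is a hyperbolic sink of the \emph{same} dimension $n+|\mc V_1|-1$, again with spectrum entirely in the open left half-plane. Since any two hyperbolic linear sinks of equal dimension are topologically conjugate --- for flows the dimensions of the stable and unstable subspaces are the only conjugacy invariants --- the two nontrivial factors are locally topologically conjugate, and taking the product with the common trivial $\mbb S^1$-factor gives the asserted local topological conjugacy near the respective synchronization manifolds. The principal obstacles are the polynomial-eigenvalue / Routh--Hurwitz computation with the \emph{singular} mass matrix $\hat M$, the algebraic (not merely geometric) simplicity of the zero eigenvalue, and the bookkeeping that upgrades ``transverse spectrum Hurwitz'' to exponential attractivity of, and a conjugacy between, the invariant manifolds.
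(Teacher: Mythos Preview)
Your argument is correct and self-contained. The key steps --- reducing the eigenvalue problem for the coupled oscillator Jacobian to the quadratic pencil $(\lambda^2\hat M+\lambda D+\mc L)v=0$, extracting $\mathrm{Re}(\lambda)\le 0$ from the scalar quadratic $a\lambda^2+b\lambda+c=0$ with $a\ge 0$, $b>0$, $c\ge 0$, verifying algebraic simplicity of the zero eigenvalue via the range obstruction $\mc L u=-D\fvec 1_n$, and then invoking Hartman--Grobman on the $\mbb S^1$-quotient together with the topological classification of hyperbolic linear sinks --- are all sound. The integral construction $h(x)=-\int_0^\infty g(\phi_t(x))\,dt$ that you allude to for straightening the skew product is the right one and converges by the exponential contraction at the sink.

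As for comparison: the paper does \emph{not} actually prove this lemma. It states the result and defers entirely to \citesec[Theorems~5.1 and~5.3]{FD-FB:10w} for the proof, so there is no in-text argument to set yours against. What you have written is therefore more than the paper supplies; your spectral route via the singular quadratic pencil is in fact the standard line of attack for mixed first/second-order mechanical networks and is very likely close in spirit to the cited external proof.
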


Loosely speaking, the topological conjugacy result means that the trajectories of the two plots in Figure \ref{Fig: 1st and 2nd order simulation} can be continuously deformed to match each other while preserving parameterization of time. Lemma \ref{Lemma: Local Equivalence of First and Second Order Synchronization} is illustrated in Figure \ref{Fig: 1st and 2nd order simulation}, and its proof can be found in \citesec[Theorems 5.1 and 5.3]{FD-FB:10w}.

\begin{figure}[h]
	\centering{
	\includegraphics[width = 0.99 \columnwidth]{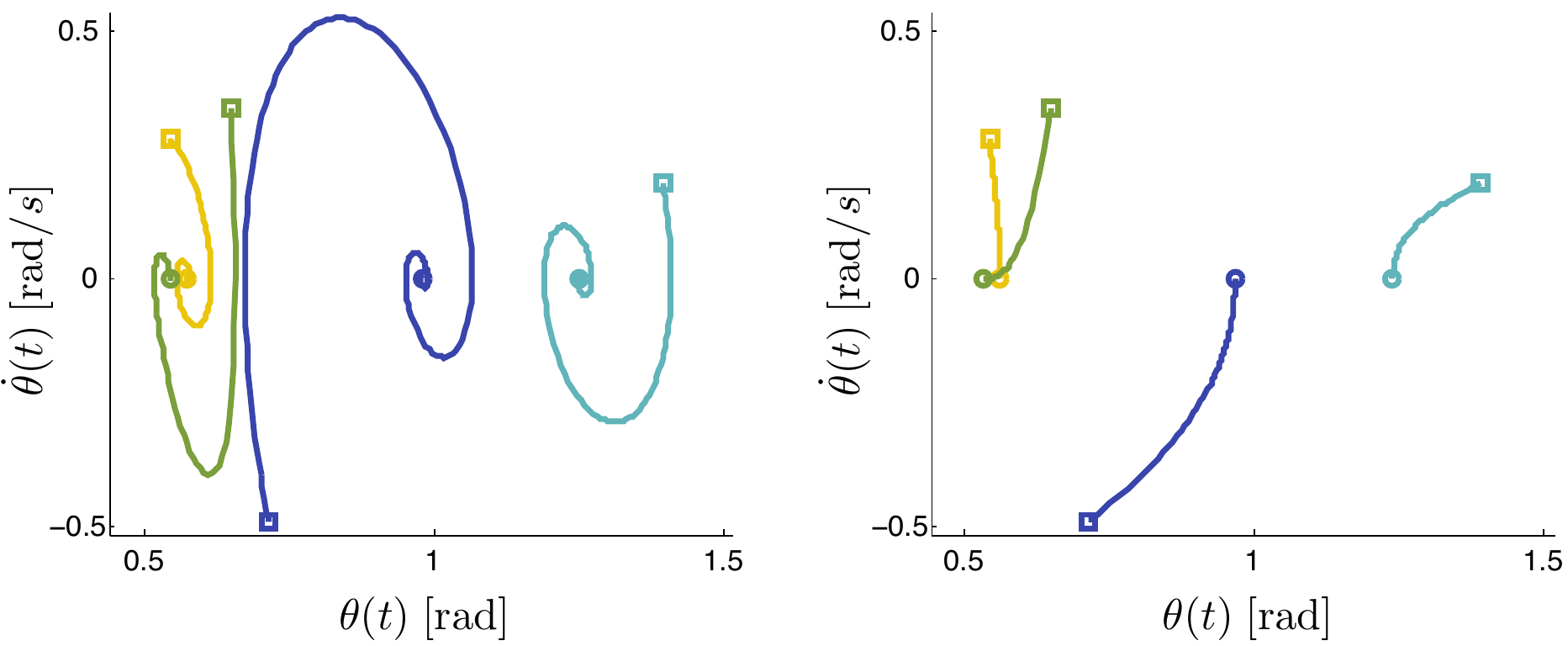}
	\caption{The left plot shows the phase space dynamics of a network of $n=4$ second-order oscillators \eqref{eq-SI: spring network} with $\mc V_{2} = \emptyset$ and Kuramoto-type coupling $a_{ij} = K/n$ for all distinct $i,j \in \mc V_{1} = \until 4$ and for $K \in \real$. The right plot shows the phase space dynamics corresponding to first-order Kuramoto oscillators \eqref{eq-SI: Kuramoto model} together with the frequency dynamics $\dt \dot \theta = - M^{-1}D \dot \theta$. The natural frequencies $\omega_{i}$ and the coupling strength $K$ are chosen such that $\subscr{\omega}{sync} = 0$ and $K = 1.1 \cdot \max_{i,j \in \until 4} | \omega_{i} - \omega_{j}|$. From the same initial configuration $\theta(0)$ (denoted by {\small$\blacksquare$}) both first and second-order oscillators converge exponentially to the same  synchronized equilibria  (denoted by {\Large$\bf\bullet$}), as predicted by Lemma \ref{Lemma: Local Equivalence of First and Second Order Synchronization}. 
	}
	\label{Fig: 1st and 2nd order simulation}
	}
\end{figure}

By Lemma \ref{Lemma: Local Equivalence of First and Second Order Synchronization}, the local synchronization problem for the coupled oscillator model \eqref{eq-SI: coupled oscillator model} reduces to the synchronization problem for the first-order Kuramoto model \eqref{eq-SI: Kuramoto model}. Henceforth, we restrict ourself to the  Kuramoto model \eqref{eq-SI: Kuramoto model}.
The following result is known in the synchronization literature \citesec{AJ-NM-MB:04,FD-FB:09z} as well as in power systems, where the saturation of a transmission line is corresponds to a singularity of the load flow Jacobian resulting in a saddle node bifurcation \citesec{CJT-OJMS:72a,CJT-OJMS:72b,SS-PV:80,ARB-DJH:81,MI:92,AA-SS-VP:81,FW-SK:82,FFW-SK:80,SG-PWS:05,PWS-BCL-MAP:93,ID:92,KSC-DJH:86}. 

\begin{lemma}{\bf (Stable synchronization in $\Delta_{G}(\pi/2)$)}
\label{Lemma: stable fixed point in pi/2 arc}
Consider the Kuramoto model \eqref{eq-SI: Kuramoto model} with a connected graph
$G(\mc V,\mc E,A)$, and let $\gamma \in {[0,\pi/2[}$. The following statements hold:
\begin{enumerate}
\item[1)] {\bf Jacobian:} The Jacobian of the Kuramoto model evaluated at $\theta \in \mathbb T^{n}$ is given by 
\begin{equation*}
	J (\theta) = - B
  \diag(\{a_{ij}\cos(\theta_{i}-\theta_{j})\}_{\{i,j\} \in \mc E}) B^{T}
  \,;
\end{equation*}

\item[2)] {\bf Stability:} If there exists an equilibrium point $\theta^{*} \in
  \bar\Delta_{G}(\gamma)$, then it belongs to a locally exponentially stable
  equilibrium manifold $[\theta^{*}] \in \bar\Delta_{G}(\gamma)$; and
  
\item[3)] {\bf Uniqueness:} This equilibrium manifold is unique in $\bar\Delta_{G}(\gamma)$.
\end{enumerate}
\end{lemma}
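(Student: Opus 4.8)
\textbf{Part 1 (the Jacobian)} is a direct computation: differentiating the right-hand side of \eqref{eq-SI: Kuramoto model} gives $\partial\dot\theta_i/\partial\theta_i=-\sum_{j}a_{ij}\cos(\theta_i-\theta_j)$ and, for $k\neq i$, $\partial\dot\theta_i/\partial\theta_k=a_{ik}\cos(\theta_i-\theta_k)$ (which vanishes unless $\{i,k\}\in\mc E$). Hence $J(\theta)$ is minus the Laplacian of $G$ with the state-dependent edge weights $a_{ij}\cos(\theta_i-\theta_j)$, and the incidence-matrix factorization $L=B\,\diag(\{a_{ij}\}_{\{i,j\}\in\mc E})\,B^{T}$ from the Preliminaries rewrites it as $J(\theta)=-B\,\diag(\{a_{ij}\cos(\theta_i-\theta_j)\}_{\{i,j\}\in\mc E})\,B^{T}$.

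\textbf{Part 2 (stability).} Let $\theta^{*}\in\bar\Delta_{G}(\gamma)$ be an equilibrium; since $\gamma<\pi/2$ we have $a_{ij}\cos(\theta^{*}_i-\theta^{*}_j)\ge a_{ij}\cos(\gamma)>0$ on every edge, so $-J(\theta^{*})$ is the Laplacian of a \emph{connected} weighted graph on the edge set $\mc E$. Therefore $-J(\theta^{*})$ is symmetric and positive semidefinite, $\Ker(-J(\theta^{*}))=\Ker(B^{T})=\mathrm{span}(\fvec 1_{n})$, and the remaining $n-1$ eigenvalues are strictly positive; equivalently $J(\theta^{*})$ has a simple zero eigenvalue along $\fvec 1_{n}$ and is negative definite on $\fvec 1_{n}^{\perp}$. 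The kernel direction $\fvec 1_{n}$ is exactly the tangent to the $1$-dimensional manifold $[\theta^{*}]$, which consists entirely of equilibria (the vector field \eqref{eq-SI: Kuramoto model} is invariant under the diagonal $\mycircle$-action $\theta\mapsto(\rot_s(\theta_1),\dots,\rot_s(\theta_n))$) and stays inside $\bar\Delta_{G}(\gamma)$ because phase differences are unchanged by that action. Reducing by the symmetry — equivalently, studying the self-contained dynamics of $\psi=\theta-\tfrac1n(\fvec 1_{n}^{T}\theta)\fvec 1_{n}\in\fvec 1_{n}^{\perp}$, whose linearization at $\psi^{*}$ is $J(\theta^{*})|_{\fvec 1_{n}^{\perp}}$ and hence Hurwitz — shows $[\theta^{*}]$ is a normally hyperbolic attractor, i.e.\ locally exponentially stable, and it lies in $\bar\Delta_{G}(\gamma)$.

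\textbf{Part 3 (uniqueness).} Here I would use the gradient structure. Summing the equilibrium equations first forces $\omega\in\fvec 1_{n}^{\perp}$ (equivalently $\subscr{\omega}{sync}=0$), and with this \eqref{eq-SI: Kuramoto model} reads $\dot\theta=-\nabla E(\theta)$ for $E(\theta)=\sum_{\{i,j\}\in\mc E}a_{ij}\bigl(1-\cos(\theta_i-\theta_j)\bigr)-\omega^{T}\theta$, whose Hessian is $\nabla^{2}E(\theta)=-J(\theta)=B\,\diag(\{a_{ij}\cos(\theta_i-\theta_j)\})\,B^{T}$. On the convex Euclidean set $\tilde\Delta(\gamma)\triangleq\{\theta\in\real^{n}:\,|\theta_i-\theta_j|\le\gamma \text{ for all }\{i,j\}\in\mc E\}$ the same cosine bound gives $\nabla^{2}E\succeq0$ with kernel $\mathrm{span}(\fvec 1_{n})$, so $E$ is convex on $\tilde\Delta(\gamma)$ and strictly convex transverse to $\fvec 1_{n}$. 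If $\theta^{(1)},\theta^{(2)}\in\tilde\Delta(\gamma)$ are both critical points of $E$, the segment between them lies in $\tilde\Delta(\gamma)$ and $t\mapsto E\bigl(\theta^{(1)}+t(\theta^{(2)}-\theta^{(1)})\bigr)$ is convex on $[0,1]$ with zero derivative at $t=0$ and $t=1$; it is therefore affine, its second derivative vanishes identically, and so $(\theta^{(2)}-\theta^{(1)})^{T}\nabla^{2}E(\cdot)(\theta^{(2)}-\theta^{(1)})\equiv0$ along the segment, which puts $\theta^{(2)}-\theta^{(1)}$ in $\mathrm{span}(\fvec 1_{n})$, i.e.\ $[\theta^{(1)}]=[\theta^{(2)}]$.

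\textbf{Anticipated obstacle.} The delicate point is the transition from the torus-level set $\bar\Delta_{G}(\gamma)$, in which Part 3 is phrased, to the convex Euclidean set $\tilde\Delta(\gamma)$ on which the convexity argument lives: one needs each equilibrium of $\bar\Delta_{G}(\gamma)$ to admit a lift into $\tilde\Delta(\gamma)$, i.e.\ the winding numbers of $\theta^{*}$ around the cycles of $G$ must vanish. For acyclic $G$ there is nothing to check, and for general $G$ this holds automatically whenever every cycle has length less than $2\pi/\gamma$ (hence length $\le4$ for any $\gamma<\pi/2$), since then geodesic edge differences of magnitude $\le\gamma$ summed around a cycle cannot equal a nonzero multiple of $2\pi$; this is precisely the ``cyclic topologies of length strictly less than five'' case. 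For graphs with longer cycles the lift can genuinely fail — ``twisted'' equilibria on long rings are the standard example — so closing the gap in full generality requires the cycle-space and cut-set analysis developed in the remainder of this section, or restricting Part 3 to the image of $\tilde\Delta(\gamma)$. Everything else (the Jacobian identity, the Laplacian positivity, the reduction by the $\mycircle$-symmetry) is routine.
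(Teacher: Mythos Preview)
Your Parts 1 and 2 coincide with the paper's proof: the Jacobian is computed directly as minus a state-dependent Laplacian, and on $\bar\Delta_G(\gamma)$ the edge weights $a_{ij}\cos(\theta_i^*-\theta_j^*)>0$ make $-J(\theta^*)$ positive semidefinite with one-dimensional kernel $\mathrm{span}(\fvec 1_n)$, yielding transversal exponential stability of $[\theta^*]$.

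For Part 3 the paper takes a shorter route: it simply cites \citesec[Corollary~1]{AA-SS-VP:81} for the claim that the right-hand side of \eqref{eq-SI: Kuramoto model} is one-to-one on $\bar\Delta_G(\pi/2)$ modulo rotations. Your convexity argument for the potential $E$ on the lifted convex set $\tilde\Delta(\gamma)\subset\real^n$ is a self-contained version of the monotonicity idea that underlies that reference, so the two approaches are equivalent in content --- yours unpacks what the paper outsources to a citation.

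Your anticipated obstacle is real, and it applies equally to the paper's proof-by-citation: injectivity results of this type are established on a convex Euclidean domain, not on the torus set $\bar\Delta_G(\gamma)$ as literally defined here. The twisted-state concern is already a concrete counterexample: on the unit-weight $5$-cycle with $\omega=\fvec 0_5$, both the phase-synchronized state and the splay state $\theta_k^*=2\pi k/5$ lie in $\bar\Delta_G(\gamma)$ for every $\gamma\in[2\pi/5,\pi/2)$, both are equilibria, and both satisfy the hypotheses of Part~2 since $\cos(2\pi/5)>0$, so uniqueness on the torus fails. The paper's downstream analysis works with the Euclidean lift anyway (via $B^T\theta^*=\arcsinbf(\psi)$ in Theorem~\ref{Theorem: Properties of the auxiliary-fixed point equations}), so nothing later is damaged; but you are correct that Part~3 as stated on $\mathbb T^n$ needs either the cycle-length restriction you give or the tacit reading of $\bar\Delta_G(\gamma)$ as its liftable component. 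This is not a flaw in your argument --- it is a subtlety the paper glosses over and that you have correctly isolated.
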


\begin{proof}
Since we have that $\frac{\partial}{\partial \theta_{i}} \bigl( \omega_{i} - \sum\nolimits_{k=1}^{n} a_{ik} \sin(\theta_{i}-\theta_{k}) \bigr) = -\sum\nolimits_{k=1}^{n} a_{ik} \cos(\theta_{i} - \theta_{k})$ and $\frac{\partial}{\partial \theta_{j}} \bigl( \omega_{i} - \sum\nolimits_{k=1}^{n} a_{ik} \sin(\theta_{i}-\theta_{k}) \bigr)  = a_{ij}\cos(\theta_{i} - \theta_{j})$, the negative Jacobian of the right-hand side of the Kuramoto model \eqref{eq-SI: Kuramoto model} equals the Laplacian matrix of the connected graph $G(\mc V,\mc E,\tilde A)$ where $\tilde a_{ij} = a_{ij}\cos(\theta_{i}-\theta_{j})$. Equivalently, in compact notation the Jacobian is given by $J (\theta) = - B \diag(\{a_{ij}\cos(\theta_{i}-\theta_{j})\}_{\{i,j\} \in \mc E}) B^{T}$. This completes the proof of statement 1).

The Jacobian $J(\theta)$ evaluated at an equilibrium point $
\theta^{*} \in \bar\Delta_{G}(\gamma)$ is negative semidefinite with rank $n-1$. Its
nullspace is $\fvec 1_{n}$ and arises from the rotational symmetry of the right-hand side of the Kuramoto model \eqref{eq-SI: Kuramoto model}, see Figure \ref{Fig: sync manifold} for an illustration. Consequently,
the equilibrium point $ \theta^{*} \in \bar\Delta_{G}(\gamma)$ is locally
(transversally) exponentially stable. Moreover, the corresponding
equilibrium manifold $[\theta^{*}] \in \bar\Delta_{G}(\gamma)$ is locally exponentially
stable. This completes the proof of statement~2).

The uniqueness statement 3) follows since the right-hand side of \eqref{eq-SI: Kuramoto model} is a one-to-one function for
$\theta \in \bar\Delta_{G}(\pi/2)$, see \citesec[Corollary 1]{AA-SS-VP:81}. 
\end{proof}

By Lemma \ref{Lemma: stable fixed point in pi/2 arc}, the problem of
finding a locally stable synchronization manifold reduces to
that of finding a fixed point $\theta^{*} \in \bar\Delta_{G}(\gamma)$ for
some $\gamma \in {[0,\pi/2[}$. The fixed-point equations of the Kuramoto
    model \eqref{eq-SI: Kuramoto model} read  as
 \begin{equation}
	\omega_{i}
	=
	\sum\nolimits_{j=1}^{n} a_{ij} \sin(\theta_{i}-\theta_{j})
	\,,\quad i \in \until n\,.
	\label{eq-SI: fixed-point equations - component-wise}
 \end{equation}
In a compact notation the fixed-point equations \eqref{eq-SI: fixed-point equations - component-wise} are
\begin{equation}
	\omega 
	=
	B  \diag\left( \{a_{ij}\}_{\{i,j\} \in \mc E} \right) \sinbf(B^{T} \theta)
	\label{eq-SI: fixed-point equations - vector form}
	\,.
\end{equation} 
The following conditions show that the natural frequencies $\omega$ have to be absolutely and incrementally bounded and the nodal degree has to be sufficiently large such that fixed points of \eqref{eq-SI: fixed-point equations - component-wise} exist. 

\begin{lemma}{\bf(Necessary synchronization conditions)}
\label{Lemma: Necessary sync condition}
Consider the Kuramoto model \eqref{eq-SI: Kuramoto model} with graph $G(\mc V,\mc E,A)$ and $\omega \in \fvec 1_n^\perp$. Let  $\gamma \in {[0,\pi/2[}$, and define the weighted nodal degree $\textup{deg}_{i} \triangleq \sum\nolimits_{j=1}^{n} a_{ij}$ for each node $i \in \until n$. The following statements hold:
\begin{enumerate}

	\item[1)] {\bf Absolute boundedness:} If there exists a synchronized solution $\theta \in \bar\Delta_{G}(\gamma)$, then 
	\begin{equation}
	\textup{deg}_{i} \sin(\gamma) 
	\geq |\omega_{i}| \qquad \mbox{ for all } i \in \until n\,.
	\label{eq-SI: necessary sync condition - 1}
\end{equation} 

	\item[2)] {\bf Incremental boundedness:} If there exists a synchronized solution $\theta \in \bar\Delta_{G}(\gamma)$, then 
	\begin{equation}
	(\textup{deg}_{i} + \textup{deg}_{j})\sin(\gamma) 
	\geq |\omega_{i} - \omega_{j}| \qquad \mbox{ for all }  \{i,j\} \in \mc E\,.
	\label{eq-SI: necessary sync condition - 2}
\end{equation} 

\end{enumerate}
\end{lemma}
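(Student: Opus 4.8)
The plan is to exploit the fixed-point characterization of synchronized solutions together with elementary monotonicity of the sine function. First I would note that, since $\omega \in \fvec 1_n^\perp$, the synchronization frequency $\subscr{\omega}{sync} = \sum_{i} \omega_{i} / \sum_{i} D_{i}$ vanishes, so any synchronized solution $\theta \in \bar\Delta_{G}(\gamma)$ is in fact an equilibrium of the Kuramoto model \eqref{eq-SI: Kuramoto model}; hence it satisfies the fixed-point equations \eqref{eq-SI: fixed-point equations - component-wise}, namely $\omega_{i} = \sum_{j=1}^{n} a_{ij} \sin(\theta_{i} - \theta_{j})$ for every $i \in \until n$. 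This reduces both statements to bounding a weighted sum of sines of phase differences along edges.

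For statement 1), I would take absolute values in the $i$-th fixed-point equation and apply the triangle inequality to obtain $|\omega_{i}| \leq \sum_{j=1}^{n} a_{ij} \, |\sin(\theta_{i} - \theta_{j})|$. The key observation is that every summand with $a_{ij} > 0$ corresponds to an edge $\{i,j\} \in \mc E$, along which phase cohesiveness gives $|\theta_{i} - \theta_{j}| \leq \gamma < \pi/2$; since $t \mapsto \sin(t)$ is odd and strictly increasing on $[-\pi/2,\pi/2]$, this yields $|\sin(\theta_{i} - \theta_{j})| \leq \sin(\gamma)$, while the terms with $a_{ij} = 0$ vanish. Summing over $j$ then gives $|\omega_{i}| \leq \sin(\gamma) \sum_{j=1}^{n} a_{ij} = \textup{deg}_{i}\, \sin(\gamma)$, which is \eqref{eq-SI: necessary sync condition - 1}.

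For statement 2), I would subtract the fixed-point equations associated with a pair of connected oscillators $\{i,j\} \in \mc E$, obtaining $\omega_{i} - \omega_{j} = \sum_{k=1}^{n} a_{ik} \sin(\theta_{i} - \theta_{k}) - \sum_{k=1}^{n} a_{jk} \sin(\theta_{j} - \theta_{k})$, and then apply the triangle inequality to the two sums separately together with the same sine bound (noting that $a_{ik} > 0 \Rightarrow \{i,k\} \in \mc E$ and likewise for $j$). This yields $|\omega_{i} - \omega_{j}| \leq \textup{deg}_{i}\sin(\gamma) + \textup{deg}_{j}\sin(\gamma)$, i.e., \eqref{eq-SI: necessary sync condition - 2}. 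There is essentially no serious obstacle in this argument; the only points requiring care are the identification of a synchronized solution with an equilibrium via $\omega \in \fvec 1_n^\perp$, and the bookkeeping that the phase-cohesiveness bound is invoked only for terms with $a_{ij} > 0$ — genuine edges — with the remaining terms contributing zero. Monotonicity of $\sin$ on $[0,\gamma] \subset {[0,\pi/2[}$ is exactly what makes these bounds sharp in the extremal configurations.
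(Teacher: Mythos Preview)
Your proposal is correct and follows essentially the same route as the paper: both arguments start from the fixed-point equations \eqref{eq-SI: fixed-point equations - component-wise}, bound each $\sin(\theta_i-\theta_j)$ term by $\sin(\gamma)$ using phase cohesiveness, and for statement~2) subtract the $i$th and $j$th equations before applying the same bound. Your write-up is simply a bit more explicit about the triangle inequality and the bookkeeping that only edge terms contribute.
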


\begin{proof}
The first condition arises since $\sin(\theta_{i} - \theta_{j}) \in {[-\sin(\gamma),\sin(\gamma)]}$ for $\theta \in \bar\Delta_{G}(\gamma)$, and the fixed-point equation \eqref{eq-SI: fixed-point equations - component-wise} has no solution if condition \eqref{eq-SI: necessary sync condition - 1} is not satisfied. 

Alternatively, since $\omega \in \fvec 1_{n}^{\perp}$, a multiplication of the fixed point equation \eqref{eq-SI: fixed-point equations - vector form} by the vector $(e_{i}^{n} - e_{j}^{n}) \in \fvec 1_{n}^{\perp}$, for $\{i,j\} \in \mc E$, or equivalently a subtraction of the $i$th and $j$th fixed-point equation \eqref{eq-SI: fixed-point equations - component-wise}, yields the following equation for all $ \{i,j\} \in \mc E$:
\begin{equation}
	\omega_{i} - \omega_{j}
	=
	\sum\nolimits_{k=1}^{n} \left( a_{ik} \sin(\theta_{i}-\theta_{k}) - a_{jk} \sin(\theta_{j}-\theta_{k}) \right)
	\,.
	\label{eq-SI: eq-SI: fixed-point equations - component-wise - difference}
\end{equation}
Again, equation \eqref{eq-SI: eq-SI: fixed-point equations - component-wise - difference} has no solution in $\bar\Delta_{G}(\gamma)$ if condition \eqref{eq-SI: necessary sync condition - 2} is not satisfied.
\end{proof}

In the following we aim to find sufficient and sharp conditions under which the fixed-point equations \eqref{eq-SI: fixed-point equations - vector form} admit a solution $\theta^{*} \in \bar\Delta_{G}(\gamma)$. We resort to a rather straightforward solution ansatz. By formally replacing each term $\sin(\theta_{i}-\theta_{j})$ in the fixed-point equations \eqref{eq-SI: fixed-point equations - vector form} by an auxiliary scalar variable $\psi_{ij}$, the fixed-point equation \eqref{eq-SI: fixed-point equations - vector form} is equivalently written~as
\begin{align}
		\omega &= B \diag\left( \{a_{ij}\}_{\{i,j\} \in \mc E} \right)\, \psi 
		\,,
		\label{eq-SI: auxiliary fixed-point equations}\\
		\psi &= \sinbf(B^{T} \theta)
		\label{eq-SI: auxiliary fixed-point equations - back to theta}
		\,,
\end{align}
where $\psi \in \real^{|\mc E|}$ is a vector with elements $\psi_{ij}$. We will refer to equations \eqref{eq-SI: auxiliary fixed-point equations} as the {\em auxiliary-fixed point equation}, and characterize their properties in the following theorem.

\begin{theorem}{\bf(Properties of the fixed point equations)}
\label{Theorem: Properties of the auxiliary-fixed point equations}
Consider the Kuramoto model \eqref{eq-SI: Kuramoto model} with graph $G(\mc V,\mc E,A)$ and $\omega \in \fvec 1_n^\perp$, its fixed-point equations \eqref{eq-SI: fixed-point equations - vector form}, and the auxiliary fixed-point equations \eqref{eq-SI: auxiliary fixed-point equations}. The following statements hold:
\begin{enumerate}
	
	\item[1)] {\bf Exact solution:} Every solution of the auxiliary fixed-point equations \eqref{eq-SI: auxiliary fixed-point equations} is of the form
\begin{equation}
	\psi = B^{T} L^{\dagger} \omega + \subscr{\psi}{hom}
	\label{eq-SI: auxiliary fixed-point equations - solved}
	\,,
\end{equation}
where the homogeneous solution $\subscr{\psi}{hom} \in \real^{|\mc E|}$ satisfies $\diag\left( \{a_{ij}\}_{\{i,j\} \in \mc E} \right)\, \subscr{\psi}{hom} \in \Ker(B)$.
	
	\item[2)] {\bf Exact synchronization condition:} 
	Let  $\gamma \in {[0,\pi/2[}$. The following three statements are equivalent:
	\begin{enumerate}
	
		\item[(i)] There exists a solution $\theta^{*} \in \bar\Delta_{G}(\gamma)$ to the fixed-point equation \eqref{eq-SI: fixed-point equations - vector form};
	
		\item[(ii)] There exists a solution $\theta \in \bar\Delta_{G}(\gamma)$ to 
		\begin{equation}
	B^{T} L^{\dagger} \omega + \subscr{\psi}{hom} = \sinbf(B^{T} \theta)
	\label{eq-SI: non-inverted cycle constraint}
	\,.
\end{equation}
		 for some $\subscr{\psi}{hom} \in \diag\left( \{1/a_{ij}\}_{\{i,j\} \in \mc E} \right) \ker(B)$; and
		
		\item[(iii)] There exists a solution $\psi \in \real^{|\mc E|}$ to the auxiliary fixed-point equation \eqref{eq-SI: auxiliary fixed-point equations} of the form \eqref{eq-SI: auxiliary fixed-point equations - solved} satisfying the {norm constraint} $\| \psi \|_{\infty} \leq \sin(\gamma)$ and the {cycle constraint} $\arcsinbf(\psi) \in \Img(B^{T})$.
	
	\end{enumerate}
	If the three equivalent statements (i), (ii), and (iii) are true, then we
        have the identities $B^{T}\theta^{*} = B^{T}\theta = \arcsinbf(\psi)$. Additionally, $[\theta^{*}] \in \bar\Delta_{G}(\gamma)$ is a locally exponentially stable synchronization manifold.
	
\end{enumerate}
\end{theorem}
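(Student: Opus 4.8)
The plan is to address statement 1 first, since statement 2 reduces to it. Writing $D_A \triangleq \diag(\{a_{ij}\}_{\{i,j\}\in\mc E})$ so that $L = BD_AB^{T}$, the auxiliary equation \eqref{eq-SI: auxiliary fixed-point equations} reads $\omega = BD_A\psi$. I would invoke the identity $LL^{\dagger} = I_n - \frac1n\fvec 1_{n\times n}$ from the preliminaries together with the standing assumption $\omega\in\fvec 1_n^\perp$ to get $\omega = LL^{\dagger}\omega = BD_AB^{T}L^{\dagger}\omega$, so that $\psi = B^{T}L^{\dagger}\omega$ is one solution. Any solution $\psi$ then satisfies $BD_A(\psi - B^{T}L^{\dagger}\omega) = 0$, i.e.\ $\subscr{\psi}{hom} \triangleq \psi - B^{T}L^{\dagger}\omega$ obeys $D_A\subscr{\psi}{hom}\in\Ker(B)$; conversely every such $\subscr{\psi}{hom}$ yields a solution. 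Since $D_A$ is invertible, $D_A\subscr{\psi}{hom}\in\Ker(B)$ is the same as $\subscr{\psi}{hom}\in\diag(\{1/a_{ij}\}_{\{i,j\}\in\mc E})\Ker(B)$, which is \eqref{eq-SI: auxiliary fixed-point equations - solved}.

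For statement 2 I would establish (i)$\Leftrightarrow$(ii) and (ii)$\Leftrightarrow$(iii). The first equivalence is pure substitution: $\theta^{*}$ solves \eqref{eq-SI: fixed-point equations - vector form} iff $\psi\triangleq\sinbf(B^{T}\theta^{*})$ solves $\omega = BD_A\psi$, and by statement 1 this holds iff $\sinbf(B^{T}\theta^{*}) = B^{T}L^{\dagger}\omega + \subscr{\psi}{hom}$ for some $\subscr{\psi}{hom}\in\diag(\{1/a_{ij}\}_{\{i,j\}\in\mc E})\Ker(B)$, which is \eqref{eq-SI: non-inverted cycle constraint}. For (ii)$\Rightarrow$(iii), given $\theta\in\bar\Delta_{G}(\gamma)$ solving \eqref{eq-SI: non-inverted cycle constraint}, I set $\psi = \sinbf(B^{T}\theta) = B^{T}L^{\dagger}\omega + \subscr{\psi}{hom}$, which has the form \eqref{eq-SI: auxiliary fixed-point equations - solved}; since each entry of $B^{T}\theta$ lies in $[-\gamma,\gamma]\subset[-\pi/2,\pi/2]$, monotonicity of $\sin$ gives $\|\psi\|_{\infty}\leq\sin(\gamma)$, and $\arcsin$ inverting $\sin$ on that branch gives $\arcsinbf(\psi) = B^{T}\theta\in\Img(B^{T})$. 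For (iii)$\Rightarrow$(ii), the cycle constraint $\arcsinbf(\psi)\in\Img(B^{T})$ produces a representative $x\in\real^{n}$ with $B^{T}x = \arcsinbf(\psi)$; the norm constraint forces every entry of $\arcsinbf(\psi)$ into $[-\gamma,\gamma]$, so reducing $x$ modulo $2\pi$ yields $\theta\in\bar\Delta_{G}(\gamma)$ with $B^{T}\theta = \arcsinbf(\psi)$, and applying $\sinbf$ gives $\sinbf(B^{T}\theta) = \psi = B^{T}L^{\dagger}\omega + \subscr{\psi}{hom}$, i.e.\ \eqref{eq-SI: non-inverted cycle constraint}. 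The identities $B^{T}\theta^{*} = B^{T}\theta = \arcsinbf(\psi)$ fall out of these constructions because $\arcsinbf(\sinbf(B^{T}\theta)) = B^{T}\theta$ whenever the entries of $B^{T}\theta$ are in $[-\pi/2,\pi/2]$, and local exponential stability of $[\theta^{*}]\subset\bar\Delta_{G}(\gamma)$ is exactly statement 2) of Lemma \ref{Lemma: stable fixed point in pi/2 arc}.

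The individual steps are short, so the real care goes into the torus bookkeeping in (iii)$\Rightarrow$(ii): translating $\arcsinbf(\psi)\in\Img(B^{T})$ into the existence of a genuine angle array $\theta\in\mbb T^{n}$ whose edge-wise geodesic distances are the prescribed values, and using $\gamma<\pi/2$ to ensure the geodesic distance coincides with the lifted difference so that $\theta$ actually lies in $\bar\Delta_{G}(\gamma)$. The one conceptual idea driving the whole proof is the decoupling in statement 1: the nonlinear fixed-point problem splits into a linear equation solved by the Laplacian pseudo-inverse up to the cycle space, plus the two nonlinear side constraints (the norm bound and the cycle constraint), and it is this separation that exposes the central role of $L^{\dagger}\omega$ and of the cycles and cut-sets of the graph.
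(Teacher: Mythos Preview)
Your proposal is correct and follows essentially the same route as the paper's proof: verify that $B^{T}L^{\dagger}\omega$ is a particular solution via $LL^{\dagger}=I_n-\tfrac{1}{n}\fvec 1_{n\times n}$ and $\omega\in\fvec 1_n^{\perp}$, characterize the homogeneous solutions, and then pass between (i), (ii), (iii) by the substitution $\psi=\sinbf(B^{T}\theta)$ together with the branch inversion $\arcsinbf$ on $[-\gamma,\gamma]$. Your extra care about lifting $\arcsinbf(\psi)\in\Img(B^{T})$ to an actual $\theta\in\bar\Delta_G(\gamma)$ on the torus is more explicit than the paper's treatment, but the argument is otherwise the same.
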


\begin{proof}
{\em Statement 1):} Every solution $\psi \in \real^{|\mc E|}$ to the
auxiliary fixed-point equations \eqref{eq-SI: auxiliary fixed-point equations}
is of the form $\psi = \subscr{\psi}{hom} + \subscr{\psi}{pt}$, where
$\subscr{\psi}{hom}$ is the homogeneous solution and $\subscr{\psi}{pt}$ is
a particular solution.
The homogeneous solution satisfies $B \diag\left( \{a_{ij}\}_{\{i,j\} \in
  \mc E} \right) \subscr{\psi}{hom} = \fvec 0_{n}$.
One can easily verify that $\subscr{\psi}{pt} = B^{T} L^{\dagger} \omega$
is a particular solution%
\footnote{
Likewise, it can also be shown that 
$(B \diag( \{a_{ij}\}_{\{i,j\} \in \mc E} ))^{\dagger} \omega$ as well as $ \diag( \{a_{ij}\}_{\{i,j\} \in \mc E} )^{-1} B^{\dagger} \omega$ are other possible particular solutions. All of these solutions differ only by addition of a homogenous solution. Each one can be interpreted as solution to a weighted least squares problem, see \citesec{IAG-IDW:00}. Further solutions can also be constructed in a graph-theoretic way by a spanning-tree decomposition, see \citesec{NB:97}. Our specific choice $\subscr{\psi}{pt} = B^{T} L^{\dagger} \omega$ has the property that $\subscr{\psi}{pt} \in \Img(B^{T})$ lives in the cut-set space, and it is the most useful particular solution in order to proceed with our synchronization analysis.
}%
, since 
$B \diag( \{a_{ij}\}_{\{i,j\} \in \mc E} )
\subscr{\psi}{pt} 
= B \diag( \{a_{ij}\}_{\{i,j\} \in \mc E} ) B^{T}
L^{\dagger} \omega 
= L L^{\dagger} \omega 
= \big(I_{n} - \frac{1}{n} \fvec
1_{n \times n}\big)\omega 
=\omega$.

{\em Statement 2), equivalence} $\bigl( \mbox{(i)} \Leftrightarrow \mbox{(ii)} \bigr):$
If there exists a solution $\theta^{*}$ of the fixed-point equations
\eqref{eq-SI: fixed-point equations - vector form}, then $\theta^{*}$ can be
equivalently obtained from equation \eqref{eq-SI: auxiliary fixed-point
  equations - back to theta} together with the solution \eqref{eq-SI:
  auxiliary fixed-point equations - solved} of the auxiliary equations
\eqref{eq-SI: auxiliary fixed-point equations}. These two equations directly give equation \eqref{eq-SI:
  non-inverted cycle constraint}.

{\em Equivalence} $\bigl( \mbox{(ii)} \Leftrightarrow \mbox{(iii)} \bigr):$
For $\theta^{*} \in \bar\Delta_{G}(\gamma)$, we have from equation \eqref{eq-SI: non-inverted cycle constraint} that $\| \psi \|_{\infty} \leq \sin(\gamma)$ and $\arcsinbf(\psi) = B^{T} \theta^{*}$, that is, $\arcsinbf(\psi) \in \Img(B^{T})$. Conversely, if the norm constraint $\| \psi \|_{\infty} \leq \sin(\gamma)$ and the cycle constraint $\arcsinbf(\psi) \in \Img(B^{T})$ are met, then equation \eqref{eq-SI: non-inverted cycle constraint} is solvable in $\bar\Delta_{G}(\gamma)$, that is, there is $\theta^{*} \in \bar\Delta_{G}(\gamma)$ such that $\arcsinbf(\psi) = B^{T} \theta^{*}$.
The local exponential stability of the associated synchronization manifold $[\theta^{*}]$ follows then directly from Lemma \ref{Lemma: stable fixed point in pi/2 arc}.
\end{proof}

The particular solution $B^{T} L^{\dagger} \omega$ to the auxiliary fixed-point equations \eqref{eq-SI: auxiliary fixed-point equations} lives in the cut-set space $\Ker(B)^{\perp}$ and the homogenous solution $\subscr{\psi}{hom}$ lives in the weighted cycle space $\subscr{\psi}{hom} \in \diag\left( \{1/a_{ij}\}_{\{i,j\} \in \mc E} \right)  \Ker(B)$. As a consequence, by statement (iii) of Theorem \ref{Theorem: Properties of the auxiliary-fixed point equations}, for each cycle in the graph, we obtain one degree of freedom in choosing the homogeneous solution $\subscr{\psi}{hom}$ as well as one nonlinear constraint $c^{T} \arcsinbf(\psi) = 0$, where $c \in \ker(B)$ is a signed path vector corresponding to the cycle.


\begin{remark}{\bf(Comments on necessity)}
\normalfont
The cycle space $\Ker(B)$ of the graph serves as a degree of freedom to find a minimum $\infty$-norm solution $\psi^{*}$ to equations \eqref{eq-SI: auxiliary fixed-point equations} via
\begin{equation}
  \min\nolimits_{\psi \in \real^{|\mc E|}} \|\psi \|_{\infty} \quad
  \mbox{ subject to} \quad \omega = B \diag\left( \{a_{ij}\}_{\{i,j\} \in \mc E} \right)\, \psi.
  	\label{eq-SI: optimal necessary condition}.
\end{equation}
By Theorem \ref{Theorem: Properties of the auxiliary-fixed point
  equations}, such a minimum $\infty$-norm solution $\psi^{*}$ necessarily
satisfies $\| \psi^{*} \|_{\infty} \leq \sin(\gamma)$ so that an
equilibrium $\theta^{*} \in \bar\Delta_{G}(\gamma)$ exists. Hence, the
condition $\| \psi^{*} \|_{\infty} \leq \sin(\gamma)$ is an {\em optimal
  necessary synchronization condition}.
  
  The optimization problem \eqref{eq-SI: optimal necessary condition} --  the minimum $\infty$-norm solution to an under-determined and consistent system of linear equations -- is well studied in the context of kinematically redundant manipulators. Its solution is known to be non-unique and contained in a disconnected  
solution space \citesec{IAG-IDW:00,IH-JL:02}. Unfortunately, there is no ``a priori'' analytic formula to construct a minimum $\infty$-norm solution, but the optimization problem is computationally tractable via its dual problem 
$\max_{u \in \real^{n}} u^{T} \omega$ subject to $\| \diag\left( \{a_{ij}\}_{\{i,j\} \in \mc E} \right)\, B^{T} u \|_{1} = 1$.
%
%
%
\oprocend
\end{remark}


\subsection{Synchronization Assessment for Specific Networks}

In this subsection we seek to establish that the condition
\begin{equation}
	\boxed{\Bigl.\;
	 \left\| B^{T} L^{\dagger} \omega \right\|_{\infty} = \left\| L^{\dagger} \omega \right\|_{\mc E,\infty}  < 1
	 \;}
	\label{eq-SI: sync condition}
\end{equation}
is sufficient for the existence of locally exponentially stable equilibria
in $\Delta_{G}(\pi/2)$. More general, for a given level of phase cohesiveness $\gamma \in {[0,\pi/2[}$ we seek to establish that the condition
\begin{equation}
	\boxed{	\Bigl.\;
	 \left\| B^{T} L^{\dagger} \omega \right\|_{\infty} = \left\| L^{\dagger} \omega \right\|_{\mc E,\infty} \leq \sin(\gamma)
	 \;}
	\label{eq-SI: sync condition - gamma}
\end{equation}
is sufficient for the existence of locally exponentially stable equilibria
in $\bar\Delta_{G}(\gamma)$. Since the right-hand side of \eqref{eq-SI: sync
  condition - gamma} is a concave function of $\gamma \in {[0,\pi/2[}$ that
    achieves its supremum value at $\gamma^{*} = \pi/2$, it follows that
    condition \eqref{eq-SI: sync condition - gamma} implies\,\eqref{eq-SI: sync
      condition}. 
      
In the main article, we provide a detailed interpretation of the synchronization conditions \eqref{eq-SI: sync condition} and \eqref{eq-SI: sync condition - gamma} from various practical perspectives. Before continuing our theoretical analysis, we provide two further abstract but insightful perspectives on the conditions \eqref{eq-SI: sync condition} and \eqref{eq-SI: sync condition - gamma}.

\begin{remark}{\bf(Interpretation of the sync condition)}
\label{Remark: interpretation of sync condition}
\normalfont\\
{\em Graph-theoretic interpretation:}
With regards to the exact and state-dependent norm and cycle conditions in statement (iii) of Theorem \ref{Theorem: Properties of the auxiliary-fixed point equations}, the proposed condition \eqref{eq-SI: sync condition - gamma} is simply a norm constraint on the network parameters in cut-set space $\Img(B^{T})$ of the graph topology, and cycle components are discarded.

{\em Circuit-theoretic interpretation:} In a circuit or power network, the variable $\omega \in \real^{n}$ corresponds to nodal power injections. Let $x \in \mathbb R^{|\mc E|}$ satisfy $Bx = \omega$, then $x$ corresponds to equivalent power injections along lines $\{i,j\} \in \mc E$.%
\footnote{
Notice that $x$ is not uniquely determined if the circuit features loops.}
Condition \eqref{eq-SI: sync condition} can then be rewritten as $\left\| B^{T} L^{\dagger} B x \right\|_{\infty} < 1$. The matrix $B^{T} L^{\dagger} B \in \mathbb R^{|\mc E| \times |\mc E|}$ has elements $(e_{n}^{i} - e_{n}^{j})^{T} L^{\dagger} (e_{n}^{k} - e_{n}^{\ell})$ for $\{i,j\}, \{k,\ell\} \in \mc E$, its diagonal elements are the effective resistances $R_{ij}$, and its off-diagonal elements are the network distribution (sensitivity) factors \citesec[Appendix 11A]{AJW-BFW:96}. Hence, from a circuit-theoretic perspective condition \eqref{eq-SI: sync condition} restricts the pair-wise effective resistances and the routing of power through the network similar to the resistive synchronization conditions developed in \citesec{FW-SK:82,FD-FB:11d,GK-MBH-KEB-MJB-BK-DA:06}
\oprocend
\end{remark}

As it turns out, the exact state-dependent synchronization conditions in Theorem \ref{Theorem: Properties of the auxiliary-fixed point equations} can be easily evaluated for the sparsest (acyclic) and densest (homogeneous) topologies and for ``worst-case'' (cut-set inducing) and ``best'' (identical) natural frequencies. For all of these cases the scalar condition \eqref{eq-SI: sync condition - gamma} is sharp. To quantify a ``sharp'' condition in the following theorem, we distinguish between {\em exact} (necessary and sufficient) conditions and {\em tight} conditions, which are sufficient in general and become necessary over a set of parametric realizations.

\begin{theorem}{\bf(Sync condition for extremal network topologies and parameters)}
\label{Theorem: Exact and tight synchronization conditions for extremal graphs}
Consider the Kuramoto model \eqref{eq-SI: Kuramoto model} with connected graph $G(\mc V,\mc E,A)$ and $\omega \in \fvec 1_n^\perp$. Consider the inequality condition \eqref{eq-SI: sync condition - gamma} for $\gamma \in {[0,\pi/2[}$.\\
The following statements hold:
\begin{enumerate}

	\item[(G1)] {\bf Exact synchronization condition for acyclic graphs:} Assume that $G(\mc V,\mc E,A)$ is acyclic. There exists an exponentially stable equilibrium $\theta^{*} \in \bar\Delta_{G}(\gamma)$ if and only if condition \eqref{eq-SI: sync condition - gamma} holds. Moreover, in this case we have that $B^{T} \theta^{*} = \arcsinbf(B^{T}L^{\dagger} \omega) \in \bar\Delta_{G}(\gamma)$;

	\item[(G2)] {\bf Tight synchronization condition for homogeneous graphs:} Assume that $G(\mc V,\mc E,A)$ is a homogeneous graph, that is, there is $K>0$ such that $a_{ij} = K$ for all distinct $i,j \in \until{n}$. Consider a compact interval $\Omega \subset \mathbb R$, and let $\fvec\Omega \subset \mathbb R^{n}$ be the set of all vectors with components $\fvec\Omega_{i} \in \Omega$ for all $i \in \until n$. For all $\omega \in \fvec\Omega$ there exists an exponentially stable equilibrium $\theta^{*} \in \bar\Delta_{G}(\gamma)$ if and only if condition \eqref{eq-SI: sync condition - gamma} holds;
	
	\item[(G3)] {\bf Exact synchronization condition for cut-set
          inducing natural frequencies:} Let $\Omega_{1},\, \Omega_{2} \in
          \mathbb R$, and let $\fvec\Omega \subset \mathbb R^{n}$ be the set of bipolar
          vectors with components $\fvec\Omega_{i} \in \{
          \Omega_{1},\Omega_{2}\}$ for $i \in \until n$. For all $\omega
          \in L \fvec\Omega$ there exists an
          exponentially stable equilibrium $\theta^{*} \in
          \bar\Delta_{G}(\gamma)$ if and only if condition \eqref{eq-SI: sync
            condition - gamma} holds.
	Moreover, $\fvec\Omega$ induces a cut-set: if $|\Omega_{2} - \Omega_{1}| = \sin(\gamma)$, then for $\omega = L\fvec\Omega$ we obtain the stable 
	equilibrium $\theta^{*} \in \bar\Delta_{G}(\gamma)$ satisfying $B^{T} \theta^{*} = \arcsin(B^{T}\fvec \Omega)$, that is,
	for all $\{i,j\} \in \mc E$, $|\theta_{i}^{*} - \theta_{j}^{*}|= 0$ if $\fvec\Omega_{i} = \fvec\Omega_{j}$ and  $|\theta_{i}^{*} - \theta_{j}^{*}|= \gamma$ if $\fvec\Omega_{i} \neq \fvec\Omega_{j}$; and
	
	\item[(G4)] {\bf Asymptotic correctness:} In the limit $\omega \to \fvec 0_{n}$, there exists an exponentially stable equilibrium $\theta^{*} \in \bar\Delta_{G}(\gamma)$ if and only if condition \eqref{eq-SI: sync condition - gamma} holds. Moreover, for each component $i \in \until{|\mc E|}$, we have that $\lim_{\omega \to \fvec 0_{n}} \bigl( B^{T} \theta^{*} \bigr)_{i} / \bigl( \arcsinbf(B^{T}L^{\dagger} \omega) \bigr)_{i} = 1$.

\end{enumerate}
\end{theorem}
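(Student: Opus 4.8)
The plan is to derive all four claims from the exact characterization of Theorem~\ref{Theorem: Properties of the auxiliary-fixed point equations}, statement~2): an exponentially stable equilibrium $\theta^{*} \in \bar\Delta_{G}(\gamma)$ exists if and only if the vector $\psi = B^{T} L^{\dagger}\omega + \subscr{\psi}{hom}$, with $\diag(\{a_{ij}\}_{\{i,j\}\in\mc E})\subscr{\psi}{hom} \in \Ker(B)$, can be chosen to meet both the norm constraint $\|\psi\|_{\infty} \leq \sin(\gamma)$ and the cycle constraint $\arcsinbf(\psi) \in \Img(B^{T})$, in which case $B^{T}\theta^{*} = \arcsinbf(\psi)$ and local exponential stability and uniqueness come for free from Lemma~\ref{Lemma: stable fixed point in pi/2 arc}. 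Each part of the theorem then amounts to deciding how much freedom the cycle term $\subscr{\psi}{hom}$ buys and whether it is needed at all. For (G1): a connected acyclic graph is a tree, so $B\in\real^{n\times(n-1)}$ has full column rank; hence $\Ker(B)=\{\fvec 0\}$, which forces $\subscr{\psi}{hom}=\fvec 0$ and $\psi=B^{T}L^{\dagger}\omega$, and $\Img(B^{T})=\real^{|\mc E|}$, which makes the cycle constraint vacuous. The characterization collapses to $\|B^{T}L^{\dagger}\omega\|_{\infty}\leq\sin(\gamma)$, i.e.\ condition~\eqref{eq-SI: sync condition - gamma}, which is therefore necessary and sufficient, with $B^{T}\theta^{*}=\arcsinbf(B^{T}L^{\dagger}\omega)$.

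For (G3): using $L^{\dagger}L = I_{n}-\tfrac1n\fvec 1_{n\times n}$ and $B^{T}\fvec 1_{n}=\fvec 0$, one gets $B^{T}L^{\dagger}\omega = B^{T}\fvec\Omega$ whenever $\omega = L\fvec\Omega$; since $\fvec\Omega$ is bipolar, the entries of $B^{T}\fvec\Omega$ lie in $\{0,\pm c\}$ with $c\triangleq|\Omega_{2}-\Omega_{1}|$, so $\|L^{\dagger}\omega\|_{\mc E,\infty}=c$ (the case where only one value occurs being trivial). For sufficiency, when $c\leq\sin(\gamma)$ I would take $\subscr{\psi}{hom}=\fvec 0$ and note that $\arcsinbf$ sends $\{0,\pm c\}$ to $\tfrac{\arcsin c}{c}\{0,\pm c\}$, hence $\arcsinbf(\psi)=\tfrac{\arcsin c}{c}\,B^{T}\fvec\Omega\in\Img(B^{T})$ while $\|\psi\|_{\infty}=c\leq\sin(\gamma)$; the characterization then yields an equilibrium with $B^{T}\theta^{*}=\arcsinbf(B^{T}\fvec\Omega)$, which is precisely the asserted cut-set pattern. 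For necessity, given an equilibrium in $\bar\Delta_{G}(\gamma)$ let $S=\{i:\fvec\Omega_{i}=\Omega_{1}\}$, let $\fvec 1_{S}$ be its indicator vector, and sum the fixed-point equation $\omega = B\diag(\{a_{ij}\}_{\{i,j\}\in\mc E})\psi$ over the nodes of $S$: the left side is $\fvec 1_{S}^{T}L\fvec\Omega=-(\Omega_{2}-\Omega_{1})\sum_{\{i,j\}\in\mc E_{S}}a_{ij}$, where $\mc E_{S}$ is the cut separating $S$ from its complement, while the right side is $(B^{T}\fvec 1_{S})^{T}\diag(\{a_{ij}\}_{\{i,j\}\in\mc E})\psi$, bounded in absolute value by $\sin(\gamma)\sum_{\{i,j\}\in\mc E_{S}}a_{ij}$ since $B^{T}\fvec 1_{S}$ is supported on $\mc E_{S}$ with entries $\pm1$ and $\|\psi\|_{\infty}\leq\sin(\gamma)$; dividing by $\sum_{\{i,j\}\in\mc E_{S}}a_{ij}>0$ gives $c\leq\sin(\gamma)$, i.e.\ condition~\eqref{eq-SI: sync condition - gamma}.

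For (G4): I would apply the implicit function theorem to the fixed-point map $\omega = B\diag(\{a_{ij}\}_{\{i,j\}\in\mc E})\sinbf(B^{T}\theta)$ at $(\theta,\omega)=(\fvec 0_{n},\fvec 0_{n})$; its Jacobian in $\theta$ is $-L$, invertible on $\fvec 1_{n}^{\perp}$, so there is a unique equilibrium branch $\theta^{*}(\omega)$ for small $\omega$, continuous with $\theta^{*}(\fvec 0_{n})=\fvec 0_{n}\in\Delta_{G}(\gamma)$ for $\gamma>0$, hence $\theta^{*}(\omega)\in\bar\Delta_{G}(\gamma)$ once $\|\omega\|$ is small; since condition~\eqref{eq-SI: sync condition - gamma} also holds once $\|\omega\|$ is small, the two are equivalent near the origin (the case $\gamma=0$ being immediate, as both reduce to $\omega=\fvec 0_{n}$), with stability from Lemma~\ref{Lemma: stable fixed point in pi/2 arc}. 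For the limiting ratio, Taylor expanding the odd map $\sinbf$ about $\fvec 0$ gives $B^{T}\theta^{*}(\omega)=B^{T}L^{\dagger}\omega+O(\|\omega\|^{3})$, and likewise $\arcsinbf(B^{T}L^{\dagger}\omega)=B^{T}L^{\dagger}\omega+O(\|\omega\|^{3})$, so the componentwise quotient tends to $1$ wherever the denominator is nonzero.

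The remaining case (G2) is where I expect the real difficulty. For $a_{ij}=K$ one has $L=nK(I_{n}-\tfrac1n\fvec 1_{n\times n})$, hence $L^{\dagger}\omega=\tfrac1{nK}\omega$ and condition~\eqref{eq-SI: sync condition - gamma} becomes $\max_{\{i,j\}\in\mc E}|\omega_{i}-\omega_{j}|\leq nK\sin(\gamma)$; the tightness (``only if'') part I would obtain by choosing a bipolar $\omega\in\fvec\Omega$ that realizes the extremes of $\Omega$ on a balanced bipartition and reusing the cut-set estimate from (G3). The sufficiency (``if'') part for \emph{every} $\omega\in\fvec\Omega$ does not follow cheaply from Theorem~\ref{Theorem: Properties of the auxiliary-fixed point equations}, because a complete graph carries $\binom{n-1}{2}$ coupled cycle constraints $\arcsin\psi_{ij}+\arcsin\psi_{jk}+\arcsin\psi_{ki}=0$ that would have to be solved jointly with the flow equations. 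I would instead use the order-parameter reduction available for the complete homogeneous graph: since $\sum_{j}\sin(\theta_{i}-\theta_{j})=nr\sin(\theta_{i}-\phi)$ with $(r\cos\phi,r\sin\phi)=\tfrac1n\sum_{j}(\cos\theta_{j},\sin\theta_{j})$, the fixed-point equations reduce to $nKr\sin(\theta_{i}-\phi)=\omega_{i}$ plus a single scalar self-consistency equation for $r$, and the analysis in \citesec{FD-FB:10w} shows precisely that $\max_{\{i,j\}\in\mc E}|\omega_{i}-\omega_{j}|\leq nK\sin(\gamma)$ forces a solution with all pairwise phase distances at most $\gamma$. Carrying that through for the weight $a_{ij}=K$ and a general cohesiveness level $\gamma$, and confirming local exponential stability via Lemma~\ref{Lemma: stable fixed point in pi/2 arc}, is the step I expect to be the most delicate.
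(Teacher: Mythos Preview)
Your proposal is correct and follows essentially the same route as the paper for (G1) and (G2): for (G1) both of you observe that $\Ker(B)=\{\fvec 0\}$ kills the homogeneous term and trivializes the cycle constraint, and for (G2) the paper likewise computes $L^{\dagger}=\tfrac{1}{Kn}(I_{n}-\tfrac1n\fvec 1_{n\times n})$ and then simply invokes \citesec[Theorem~4.1]{FD-FB:10w} rather than attacking the $\binom{n-1}{2}$ cycle constraints directly, so the step you flagged as ``most delicate'' is in fact outsourced in the paper too.

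For (G3) and (G4) you take a somewhat different (and in both places more careful) path. In (G3) the paper fixes $\subscr{\psi}{hom}=\fvec 0$, checks that $\arcsinbf(B^{T}\fvec\Omega)$ is a scalar multiple of $B^{T}\fvec\Omega\in\Img(B^{T})$, and then asserts that the exact synchronization condition of Theorem~\ref{Theorem: Properties of the auxiliary-fixed point equations} holds if and only if $\|\psi\|_{\infty}=c\leq\sin(\gamma)$; the ``only if'' direction (why no \emph{other} choice of $\subscr{\psi}{hom}$ can help when $c>\sin(\gamma)$) is not spelled out. Your cut-set summation over the bipartition $S$ supplies exactly that missing necessity argument and is independent of the $\subscr{\psi}{hom}$ freedom. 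In (G4) the paper argues heuristically that since $\arcsin(x)/x\to 1$ the cycle constraint is ``asymptotically met'' with $\psi=B^{T}L^{\dagger}\omega$; your implicit-function-theorem construction of the equilibrium branch $\theta^{*}(\omega)$ and the Taylor expansion $B^{T}\theta^{*}=B^{T}L^{\dagger}\omega+O(\|\omega\|^{3})$ make this rigorous, at the expected price of the caveat that the componentwise ratio is only meaningful where $(B^{T}L^{\dagger}\omega)_{i}\neq 0$.
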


\begin{proof}{\em Statement (G1):} For an acyclic graph we have that $\Ker(B) = \emptyset$. According to Theorem \ref{Theorem: Properties of the auxiliary-fixed point equations}, there exists an equilibrium $\theta^{*} \in \bar \Delta_{G}(\gamma)$ if and only if condition \eqref{eq-SI: sync condition - gamma} is satisfied. In this case, we obtain $B^{T}\theta^{*} = \arcsinbf(B^{T} L^{\dagger} \omega)$. This completes the proof of statement (G1).

{\em Statement (G2):} In the homogeneous case, we have that $L = K \bigl(n I_{n} - \fvec 1_{n \times n} \bigr)$ and $L^{\dagger} = \frac{1}{Kn} \bigl(I_{n} - \frac{1}{n} \fvec 1_{n \times n} \bigr)$, see \citesec[Lemma 3.13]{FD-FB:11d}. Thus, the inequality condition \eqref{eq-SI: sync condition - gamma} can be equivalently rewritten as
$
	\sin(\gamma) \geq \left\| B^{T} L^{\dagger} \cdot \omega \right\|_{\infty} = \frac{1}{Kn} \left\| B^{T} \omega \right\|_{\infty} 
$.
According to \citesec[Theorem 4.1]{FD-FB:10w}, the Kuramoto model \eqref{eq-SI: Kuramoto model} with homogenous coupling $a_{ij} = K$ features an exponentially stable equilibrium $\theta^{*} \in \bar\Delta_{G}(\gamma)$, $\gamma \in {[0,\pi/2[}$, for all $\omega \in \fvec \Omega$ if and only if the condition $K >  \left\| B^{T} \omega \right\|_{\infty}/n$ is satisfied. This concludes the proof of statement (G2).

{\em Statement (G3):} For notational convenience, let $c \triangleq \subscr{\Omega}{1} - \subscr{\Omega}{2}$. Then, for $\omega \in L\fvec\Omega$, we have that $B^{T} L^{\dagger} \omega = B^{T} L^{\dagger} L \fvec\Omega = B^{T} \fvec\Omega$ is a vector with components $\{-c,0,+c\}$. Now consider the solution $\psi = B^{T}
L^{\dagger} \omega = B^{T} \fvec\Omega$ to the auxiliary fixed point equations \eqref{eq-SI: auxiliary fixed-point equations}, and notice that $\arcsinbf(\psi) = \arcsinbf(B^{T} \fvec \Omega)$ has components $\{-\arcsin(c),0,+\arcsin(c)\}$. In
particular, we have that $\arcsinbf(\psi) \in \Img(B^{T})$, and the exact
synchronization condition from Theorem \ref{Theorem: Properties of the
  auxiliary-fixed point equations} is satisfied if and only if
$\|\psi\|_{\infty} = c \leq \sin(\gamma)$, which corresponds to condition
\eqref{eq-SI: sync condition - gamma}. The cut-set property follows since $B^{T}
\theta^{*} = \arcsinbf(\psi)$ has components $\{-\arcsin(c),0,+\arcsin(c)\} =
\{- \gamma,0,+\gamma\}$.
This concludes the proof of statement (G3).

{\em Statement (G4):} Since $\lim_{x \to 0} \bigl(\arcsin(x)/x\bigr) = 1$ for $x \in \real$, we obtain 
%
$\lim_{\omega \to \fvec 0_{n}}
\bigl( \arcsinbf(B^{T} L^{\dagger} \omega)_{i} / (B^{T} L^{\dagger} \omega) \bigr)_{i} = 1$ for each component $i \in \until{|\mc E|}$. Thus, the cycle constraint $\arcsinbf(\psi) \in \Img(B^{T})$
is asymptotically met with $\psi = B^{T} L^{\dagger} \omega$. In this case,
the solution of equation \eqref{eq-SI: non-inverted cycle constraint} is
obtained as $B^{T} \theta^{*} = B^{T} L^{\dagger}\omega$, and we have that
$\theta^{*} \in \bar\Delta_{G}(\gamma)$ if and only if the norm constraint
\eqref{eq-SI: sync condition - gamma} is satisfied.%
\footnote{Of course, the limit $\omega \to \fvec 0_{n}$ also implies that the resulting equilibrium $\theta^{*} \in \bar\Delta_{G}(0)$ corresponds to phase synchronization $\theta_{i} = \theta_{j}$ for all $i,j \in \until n$. The converse statement $\theta^{*} \in \bar\Delta_{G}(0)$ $\implies$ $\omega = \fvec 0_{n}$ is also true and its proof can be found in \citesec[Theorem 5.5]{FD-FB:10w}.
}
This concludes the proof of statement (G4) and Theorem \ref{Theorem: Exact and tight synchronization conditions for extremal graphs}.
\end{proof}

Theorem \ref{Theorem: Properties of the auxiliary-fixed point equations} shows that the solvability of the fixed-point equations \eqref{eq-SI: fixed-point equations - vector form} is inherently related to the {\em cycle constraints}. The following lemma establishes feasibility of a single cycle.

\begin{lemma}[\bf Single cycle feasibility]
\label{Lemma: cycle feasibility}
Consider the Kuramoto model \eqref{eq-SI: Kuramoto model} with a cycle graph
$G(\mc V,\mc E,A)$ and $\omega \in \fvec 1_n^\perp$.
Without loss of generality, assume that the edges are labeled by $\{i,i+1\} \pmod n$ for $i \in \until n$ and $ \Ker(B) = \textup{span}(\fvec 1_{n})$. Define
 $x \in \fvec 1_n^\perp$ and $y \in \mathbb R^{n}_{>0}$ uniquely by $x \triangleq B^{T} L^{\dagger} \omega$ and $y_{i} \triangleq a_{i,(i+1)\!\pmod n} > 0$ for $i \in \until n$.
Let $\gamma \in {[0,\pi/2[}$. \\The following statements are equivalent:
\begin{enumerate}

\item[(i)] There exists a stable equilibrium $\theta^{*} \in \bar\Delta_{G}(\gamma)$; and 

\item[(ii)] The function
  $\map{f}{[\subscr{\lambda}{min},\subscr{\lambda}{max}]}{\real}$ with domain boundaries
    $\subscr{\lambda}{min} = \max\limits_{i \in \until
    n} \frac{-\sin(\gamma) - x_{i}}{y_{i}}$
    and
      $\subscr{\lambda}{max} = \min\limits_{i \in \until n} \frac{\sin(\gamma)
    - x_{i}}{y_{i}}$ 
    and defined by 
  $f(\lambda) = \sum_{i=1}^{n} \arcsin(x_{i} + \lambda y_{i})$ satisfies
  $f(\subscr{\lambda}{min}) < 0 < f(\subscr{\lambda}{max})$. 

\end{enumerate}
If both equivalent statements 1) and 2) are true, then $B^{T} \theta^{*} \!=\! \arcsinbf(x + \lambda^{*} y) $, where $\lambda^{*} \in [\subscr{\lambda}{min},\subscr{\lambda}{max}]$ satisfies $f(\lambda^{*}) \!=\!0$.
\end{lemma}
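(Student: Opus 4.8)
\noindent\emph{Proof plan.}
The plan is to specialize statement (iii) of Theorem~\ref{Theorem: Properties of the auxiliary-fixed point equations} to a single cycle, where the cycle space $\Ker(B)$ is one-dimensional and the cut-set space $\Img(B^{T})$ is a hyperplane, thereby reducing the synchronization problem to a scalar intermediate-value problem for the function $f$.

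First I would invoke the equivalence (i)$\,\Leftrightarrow\,$(iii) of Theorem~\ref{Theorem: Properties of the auxiliary-fixed point equations}: a stable equilibrium $\theta^{*}\in\bar\Delta_{G}(\gamma)$ exists if and only if there is a vector $\psi$ of the form \eqref{eq-SI: auxiliary fixed-point equations - solved} with $\|\psi\|_{\infty}\leq\sin(\gamma)$ and $\arcsinbf(\psi)\in\Img(B^{T})$, and in that case $B^{T}\theta^{*}=\arcsinbf(\psi)$. Since $G$ is a cycle, $\Ker(B)=\textup{span}(\fvec 1_{n})$ is one-dimensional, so the homogeneous part of \eqref{eq-SI: auxiliary fixed-point equations - solved} is a one-parameter family; parametrizing it by $\lambda\in\real$ and writing $x=B^{T}L^{\dagger}\omega$, every candidate $\psi$ is affine in $\lambda$, of componentwise form $\psi_{i}(\lambda)=x_{i}+\lambda y_{i}$, $i\in\until n$. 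Because $G$ is connected, $\Img(B^{T})=\Ker(B)^{\perp}=\fvec 1_{n}^{\perp}$, so the cycle constraint $\arcsinbf(\psi)\in\Img(B^{T})$ collapses to the single scalar equation $\fvec 1_{n}^{T}\arcsinbf(\psi(\lambda))=\sum_{i=1}^{n}\arcsin(x_{i}+\lambda y_{i})=0$, i.e. $f(\lambda)=0$; and since $y_{i}>0$, the constraint $|\psi_{i}(\lambda)|\leq\sin(\gamma)$ reads $(-\sin(\gamma)-x_{i})/y_{i}\leq\lambda\leq(\sin(\gamma)-x_{i})/y_{i}$, whose intersection over $i$ is exactly the interval $[\subscr{\lambda}{min},\subscr{\lambda}{max}]$ that serves as the domain of $f$. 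Hence a feasible $\psi$, and therefore a stable equilibrium, exists if and only if $f$ vanishes somewhere on $[\subscr{\lambda}{min},\subscr{\lambda}{max}]$.

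The remaining step is to characterize when $f$ vanishes on its domain. On $[\subscr{\lambda}{min},\subscr{\lambda}{max}]$ each $\psi_{i}(\lambda)\in[-\sin(\gamma),\sin(\gamma)]\subset(-1,1)$ because $\gamma<\pi/2$, so $f$ is continuous with $f'(\lambda)=\sum_{i=1}^{n}y_{i}/\sqrt{1-\psi_{i}(\lambda)^{2}}>0$; being continuous and strictly increasing on a compact interval, $f$ takes the value $0$ in the interior exactly when its endpoint values strictly bracket $0$, that is, $f(\subscr{\lambda}{min})<0<f(\subscr{\lambda}{max})$. (If $\subscr{\lambda}{min}>\subscr{\lambda}{max}$ the norm constraint is infeasible and both statements are understood to fail; the degenerate cases in which a zero of $f$ sits at an endpoint, so the equilibrium has an edge exactly at phase distance $\gamma$, or in which the interval degenerates to a point, require a short separate discussion.) Finally, from a zero $\lambda^{*}$ one reads off $\psi(\lambda^{*})=x+\lambda^{*}y$ with $\arcsinbf(\psi(\lambda^{*}))\in\Img(B^{T})$, solves \eqref{eq-SI: non-inverted cycle constraint} for $\theta^{*}\in\bar\Delta_{G}(\gamma)$ with $B^{T}\theta^{*}=\arcsinbf(x+\lambda^{*}y)$, and obtains local exponential stability of $[\theta^{*}]$ from Lemma~\ref{Lemma: stable fixed point in pi/2 arc}, since $\bar\Delta_{G}(\gamma)\subset\bar\Delta_{G}(\pi/2)$.

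The hard part is establishing the strict monotonicity of $f$, since this is what turns the intermediate value theorem from a merely \emph{sufficient} condition into the claimed \emph{equivalence}. Monotonicity relies on the generator of the cycle space being $\fvec 1_{n}$ — a vector all of whose entries have the same sign — which is precisely the normalization $\Ker(B)=\textup{span}(\fvec 1_{n})$ fixed by the chosen edge labelling; were the cycle-space generator to have mixed signs, the components $\psi_{i}(\lambda)$ would no longer all move in the same direction with $\lambda$ and $f$ could fail to be monotone. A secondary technicality is reconciling the closed domain of $f$ with the strict inequalities in statement (ii), i.e. the boundary and degenerate configurations just mentioned.
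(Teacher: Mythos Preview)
Your approach is essentially identical to the paper's: both specialize Theorem~\ref{Theorem: Properties of the auxiliary-fixed point equations} to the one-dimensional cycle space $\Ker(B)=\textup{span}(\fvec 1_{n})$, parametrize $\psi=x+\lambda y$, translate the norm constraint into the interval $[\subscr{\lambda}{min},\subscr{\lambda}{max}]$ and the cycle constraint into $f(\lambda)=0$, and then invoke continuity and strict monotonicity of $f$ together with the intermediate value theorem. Your write-up is in fact a bit more explicit than the paper's (you compute $f'$ and flag the endpoint/degenerate cases, which the paper glosses over), but the logical skeleton is the same.
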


\begin{proof}
According to Theorem \ref{Theorem: Properties of the auxiliary-fixed point equations}, there exists a stable equilibrium $\theta^{*} \in \bar\Delta_{G}(\gamma)$ if and only if there exists a solution $\psi = x + \lambda y$, $\lambda \in \mathbb R$, to the auxiliary fixed-point equations \eqref{eq-SI: auxiliary fixed-point equations}
satisfying the norm constraint $\| \psi \|_{\infty} \leq \sin(\gamma)$ and the cycle constraint $\arcsinbf(\psi) \in \Img(B^{T})$. 

Equivalently, since $ \Ker(B) = \textup{span}(\fvec 1_{n})$, there is $\lambda \in \mathbb R$ satisfying the norm constraint
$\|x + \lambda y\|_{\infty} \leq \sin(\gamma) < 1$ and the cycle constraint $\fvec 1_{n}^{T}\arcsinbf(x + y \lambda) = 0$.
%
Equivalently, the function $f(\lambda) = \fvec 1_{n}^{T}\arcsinbf(x + y \lambda)$ features a zero $\lambda^{*} \in [\subscr{\lambda}{min},\subscr{\lambda}{max}]$ (corresponding to the cycle constraint), where the constraints on $\subscr{\lambda}{min}$ and $\subscr{\lambda}{max}$ guarantee the norm constraints $x_{i} + y_{i} \subscr{\lambda}{max} \leq \sin(\gamma)$ and $x_{i} + y_{i} \subscr{\lambda}{min} \geq -\sin(\gamma)$ for all $i \in \until n$.
Equivalently, by the intermediate value theorem and due to continuity and (strict) monotonicity of the function $f$, we have that $f(\subscr{\lambda}{min}) < 0 < f(\subscr{\lambda}{max})$.
Finally, if $\lambda^{*} \in [\subscr{\lambda}{min},\subscr{\lambda}{max}]$ is found such that $f(\lambda^{*}) = 0$, then,\,by Theorem \ref{Theorem: Properties of the auxiliary-fixed point equations}, $B^{T} \theta^{*} = \arcsinbf(\psi) = \arcsinbf(x + \lambda^{*}y)$. 
\end{proof}

Lemma \ref{Lemma: cycle feasibility} offers a checkable synchronization condition for cycles, which leads to the following theorem.

\begin{theorem}{\bf( Sync conditions for cycle graphs)}
\label{Theorem: cycle graph}
Consider the Kuramoto model \eqref{eq-SI: Kuramoto model} with a cycle graph $G(\mc V,\mc E,A)$  and $\omega \in \fvec 1_n^\perp$. Consider the inequality condition \eqref{eq-SI: sync condition - gamma} for $\gamma \in {[0,\pi/2[}$. The following statements hold.
\begin{enumerate}
	
	\item[(C1)] {\bf Exact sync condition for symmetric natural frequencies:} Assume that $\omega \in \fvec 1_n^\perp$ is such that $B^{T} L^{\dagger}  \omega$ is a symmetric vector 
	\footnote{A vector $x \in \fvec 1_n^\perp$ is {\em symmetric} if its histogram is symmetric, that is, up to permutation of its elements, $x$ is of the form $x = [-c,+c]^{T}$ for $n$ even and some vector $c \in \mathbb R^{n/2}$ and $x = [-c,0,+c]^{T}$ for $n$ odd and some $c \in \mathbb R^{(n-1)/2}$.}%
	. There is an exponentially stable equilibrium $\theta^{*} \in \bar\Delta_{G}(\gamma)$ if and only if condition \eqref{eq-SI: sync condition - gamma}\,holds. Moreover,  in this case $B^{T} \theta^{*} \!=\! \arcsinbf(B^{T}L^{\dagger} \omega)$. 
	
		\item[(C2)] {\bf Tight sync condition for low-dimensional cycles:} Assume the network contains $n \in \{3,4\}$ oscillators. Consider a compact interval $\Omega \subset \mathbb R$, and let $\fvec\Omega \in \mathbb R^{n}$ be the set of vectors with components $\fvec\Omega_{i} \in \Omega$ for all $i \in \until n$. For all $\omega \in L\fvec\Omega$ there exists an exponentially stable equilibrium $\theta^{*} \in \bar\Delta_{G}(\gamma)$ if and only if condition \eqref{eq-SI: sync condition - gamma} holds. 
		
		\item[(C3)] {\bf General cycles and network parameters:} In general for $n \geq 5$ oscillators, condition \eqref{eq-SI: sync condition} does \underline{not} guarantee existence of an equilibrium $\theta^{*} \in \Delta_{G}(\pi/2)$. As a sufficient condition, there exists an exponentially stable equilibrium $\theta^{*} \in \bar\Delta_{G}(\gamma)$, $\gamma \in {[0,\pi/2[}$\,, if 

\end{enumerate}
\begin{equation}
	\left\| B^{T} L^{\dagger} \omega \right\|_{\infty}  \leq 
	\frac{\min_{\{i,j\} \in \mc E} a_{ij}}{ \max_{\{i,j\} \in \mc E} a_{ij} + \min_{\{i,j\} \in \mc E} a_{ij} } \cdot \sin(\gamma)
	\label{eq-SI: sufficient condition for cyclic equilibrium}
	\,.
\end{equation}
\end{theorem}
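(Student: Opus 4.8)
The plan is to push all of (C1)--(C3) through the one-parameter picture of cycle equilibria supplied by statement~(iii) of Theorem~\ref{Theorem: Properties of the auxiliary-fixed point equations} and Lemma~\ref{Lemma: cycle feasibility}. For a cycle $\Ker(B)=\mathrm{span}(\fvec 1_n)$ and $\Img(B^T)=\fvec 1_n^\perp$ (in the $n$-dimensional edge space), so every admissible multiplier has the form $\psi=x+\lambda y$ with $x=B^TL^\dagger\omega$ fixed, $y>0$ the weighted-cycle vector of Lemma~\ref{Lemma: cycle feasibility}, and $\lambda\in\real$ free; a locally exponentially stable equilibrium in $\bar\Delta_G(\gamma)$ exists iff some $\lambda$ satisfies both the norm constraint $\|x+\lambda y\|_\infty\le\sin(\gamma)$ and the cycle constraint $f(\lambda):=\sum_i\arcsin(x_i+\lambda y_i)=0$. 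Since $f$ is continuous and strictly increasing wherever defined, the uniform recipe is to locate the norm-feasible interval $[\subscr{\lambda}{min},\subscr{\lambda}{max}]$, check the sign of $f$ at its endpoints, apply the intermediate value theorem, and then read off $B^T\theta^\ast=\arcsinbf(x+\lambda^\ast y)$ and stability from Theorem~\ref{Theorem: Properties of the auxiliary-fixed point equations} and Lemma~\ref{Lemma: stable fixed point in pi/2 arc}. (We may take $\gamma>0$, the case $\gamma=0$ being the trivial phase-synchronized one.)

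For \textbf{(C1)}, when $x$ is symmetric, sufficiency is immediate with $\lambda=0$: the norm constraint is exactly \eqref{eq-SI: sync condition - gamma}, and $\sum_i\arcsin(x_i)=0$ because $\arcsin$ is odd and the histogram of $x$ is symmetric, which yields the stated equilibrium with $B^T\theta^\ast=\arcsinbf(B^TL^\dagger\omega)$. For necessity, suppose an equilibrium exists, so $\psi=x+\lambda y$ with $\|\psi\|_\infty\le\sin(\gamma)$; if some $x_j>\sin(\gamma)$ (the case $x_j<-\sin(\gamma)$ being symmetric), histogram symmetry produces an index $k$ with $x_k=-x_j<-\sin(\gamma)$, and then $\psi_j\le\sin(\gamma)$ forces $\lambda y_j\le\sin(\gamma)-x_j<0$, i.e.\ $\lambda<0$, while $\psi_k\ge-\sin(\gamma)$ forces $\lambda y_k\ge x_j-\sin(\gamma)>0$, i.e.\ $\lambda>0$ --- a contradiction. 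Hence $\|x\|_\infty\le\sin(\gamma)$, which is \eqref{eq-SI: sync condition - gamma}; note this argument never invokes the cycle constraint.

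For \textbf{(C3)}, write $\rho=\|B^TL^\dagger\omega\|_\infty$ and let $\subscr{a}{min},\subscr{a}{max}$ be the extreme edge weights. The hypothesis \eqref{eq-SI: sufficient condition for cyclic equilibrium} already forces $\rho\le\tfrac12\sin(\gamma)<\sin(\gamma)$, so $0$ is interior to the norm-feasible interval. At $\lambda=\subscr{\lambda}{max}=\min_i(\sin(\gamma)-x_i)/y_i$ the binding coordinate $k$ has $\psi_k=\sin(\gamma)$, and since $\subscr{\lambda}{max}\ge(\sin(\gamma)-\rho)/y_{\max}$ one gets, for every $i$, $\psi_i=x_i+\subscr{\lambda}{max}y_i\ge-\rho+(\sin(\gamma)-\rho)\,y_{\min}/y_{\max}=-\rho+(\sin(\gamma)-\rho)\,\subscr{a}{min}/\subscr{a}{max}$, using $y_{\min}/y_{\max}=\subscr{a}{min}/\subscr{a}{max}$. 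This is nonnegative exactly when $\rho(\subscr{a}{max}+\subscr{a}{min})\le\subscr{a}{min}\sin(\gamma)$, i.e.\ precisely under \eqref{eq-SI: sufficient condition for cyclic equilibrium}; so all $\psi_i\ge 0$ at $\subscr{\lambda}{max}$ and $f(\subscr{\lambda}{max})=\gamma+\sum_{i\ne k}\arcsin(\psi_i)\ge\gamma>0$. The mirror computation at $\subscr{\lambda}{min}$ forces all $\psi_i\le 0$ and $f(\subscr{\lambda}{min})\le-\gamma<0$, so Lemma~\ref{Lemma: cycle feasibility} concludes. Note that, in contrast to \eqref{eq-SI: sync condition - gamma}, this stronger bound makes every $\psi_i$ nonnegative at $\subscr{\lambda}{max}$ regardless of $n$.

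Part \textbf{(C2)} is where the real effort sits, and it is the main obstacle. Since $\omega=L\fvec\Omega$ we have $x=B^TL^\dagger L\fvec\Omega=B^T\fvec\Omega$, which telescopes around the cycle so $\sum_i x_i=0$, and over the whole family $\{L\fvec\Omega\}$ condition~\eqref{eq-SI: sync condition - gamma} amounts to $\sup\Omega-\inf\Omega\le\sin(\gamma)$. For sufficiency, fix an admissible $\fvec\Omega$; because $\sum_i x_i=0$ and $\subscr{\lambda}{max}\ge 0$ (each $\sin(\gamma)-x_i\ge 0$), at $\subscr{\lambda}{max}$ we have $\sum_i\psi_i=\subscr{\lambda}{max}\sum_i y_i\ge 0$, hence $\sum_{i\ne k}\psi_i\ge-\sin(\gamma)$ with every $\psi_i\in[-\sin(\gamma),\sin(\gamma)]$, and it then suffices to establish the elementary inequality: \emph{any $m\le 3$ numbers in $[-\sin(\gamma),\sin(\gamma)]$ whose sum is at least $-\sin(\gamma)$ have arcsines summing to at least $-\gamma$}; applied with $m=n-1\le 3$ this gives $f(\subscr{\lambda}{max})\ge 0$, symmetrically $f(\subscr{\lambda}{min})\le 0$, and the intermediate value theorem plus Theorem~\ref{Theorem: Properties of the auxiliary-fixed point equations}(iii) finish. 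For tightness, if $\sup\Omega-\inf\Omega>\sin(\gamma)$ take the bipolar $\fvec\Omega^\ast$ equal to $\inf\Omega$ on one arc of the cycle and $\sup\Omega$ on the complementary arc; then $x^\ast=B^T\fvec\Omega^\ast$ has entries in $\{0,\pm(\sup\Omega-\inf\Omega)\}$, hence is symmetric with $\|x^\ast\|_\infty>\sin(\gamma)$, so by the exactness just proved in (C1) no equilibrium in $\bar\Delta_G(\gamma)$ exists for $\omega^\ast=L\fvec\Omega^\ast$. The only nontrivial point is the $m=3$ case of the arcsine inequality ($m\le 1$ is trivial and $m=2$ reduces to $\arcsin t+\arcsin(\sin(\gamma)-t)\le\gamma$ on $[0,\sin(\gamma)]$, where the left side is maximized at the endpoints): I would show the constrained minimum of $\sum_{\ell=1}^3\arcsin(u_\ell)$ must lie on the face $\sum_\ell u_\ell=-\sin(\gamma)$ (the box minimum $(-\sin\gamma,-\sin\gamma,-\sin\gamma)$ being infeasible), that interior Lagrange critical points there are local maxima since $\arcsin''<0$ on the negative range in play, and therefore the minimum lies on the boundary of the box, where it equals exactly $-\gamma$. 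This estimate is also precisely what fails for $m=4$ --- e.g.\ two entries at $-\sin(\gamma)$ and two at $\tfrac12\sin(\gamma)$, using $2\arcsin(\tfrac12\sin(\gamma))<\gamma$ by strict convexity of $\arcsin$, give arcsines summing below $-\gamma$ --- which seeds the $n\ge5$ non-existence asserted in (C2)/(C3).
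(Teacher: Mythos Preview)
Your treatment of (C1) and of the sufficient bound in (C3) mirrors the paper's: both take $\lambda=0$ for symmetric $x$ and both show each $\psi_i$ has a fixed sign at $\lambda_{\max}$ (resp.\ $\lambda_{\min}$) under the stronger bound. Your (C1) necessity argument via the sign-of-$\lambda$ contradiction is actually cleaner than the paper's, which relies implicitly on $f$ having a unique zero at $\lambda=0$.

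Your (C2) takes a genuinely different route. The paper argues $n=3$ and $n=4$ separately by a combinatorial case split on the sign pattern of $x$, using super-additivity of $\arcsin$ to force $f(\lambda_{\min})<0$ term by term after reducing away the symmetric case. You instead reduce both cases to a single \emph{arcsine inequality}---that $m\le 3$ numbers in $[-\sin\gamma,\sin\gamma]$ with sum $\ge -\sin\gamma$ have arcsines summing to $\ge -\gamma$---proved by a Lagrange/concavity argument on the active face. This is more conceptual, handles $n\in\{3,4\}$ uniformly, and makes visible why $n\le 4$ is the threshold: the inequality is tight at $m=3$ and breaks at $m=4$. Your tightness argument via a bipolar $\fvec\Omega^\ast$ and (C1) is essentially the paper's necessity argument, just phrased more directly.

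There is, however, a real gap in (C3): exhibiting that the arcsine inequality fails for $m=4$ shows only that \emph{your proof technique} does not extend, not that condition~\eqref{eq-SI: sync condition} actually fails for some cycle with $n\ge 5$. The paper closes this by building an explicit family (unit-weight cycles with $x=\alpha\,[-1,-1,1,\tfrac{1}{n-3}\fvec 1_{n-3}]^T$, $\|x\|_\infty=\alpha<1$) and verifying $f(\lambda_{\max})<0$ for $\alpha$ near $1$ and $n$ large, so no $\lambda^\ast$ exists. Your $m=4$ witness $(-s,-s,s/2,s/2)$ can be promoted to such a counterexample---take $n=5$, unit weights, and $x=(s,-s,-s,s/2,s/2)$, which has $\sum_i x_i=0$ and $\|x\|_\infty=s<1$, then check directly that $f<0$ on the whole norm-feasible interval $[s-1,1-s]$ for $s$ near $1$---but you need to carry this out rather than say it merely ``seeds'' the construction.
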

\medskip

\begin{proof}
To prove the statements of Theorem \ref{Theorem: cycle graph} and to show the existence of an equilibrium $\theta^{*} \in \bar\Delta_{G}(\gamma)$, we invoke the equivalent formulation via the function $f(\lambda)$ as constructed in Lemma \ref{Lemma: cycle feasibility}. In particular, we seek to prove the statement:

\begin{quotation}
 Let 
    $\subscr{\lambda}{min} = \max\nolimits_{i \in \until
    n} \frac{-\sin(\gamma) - x_{i}}{y_{i}}$
    and
     $\subscr{\lambda}{max} = \min\nolimits_{i \in \until n} \frac{\sin(\gamma)
    - x_{i}}{y_{i}}$. 
    The function
  $\map{f}{[\subscr{\lambda}{min},\subscr{\lambda}{max}]}{\real}$ defined
  by $f(\lambda) = \sum_{i=1}^{n} \arcsin(x_{i} + \lambda y_{i})$ satisfies
  $f(\subscr{\lambda}{min}) < 0 < f(\subscr{\lambda}{max})$ (equivalently there is $\lambda^{*} \in [\subscr{\lambda}{min},\subscr{\lambda}{max}]$ such that $f(\lambda^{*}) = 0$) if and only if the condition $\|x\|_{\infty} \leq \sin(\gamma)$ is satisfied.    
\end{quotation}

{\em Statement (C1):}
For a symmetric vector $x = B^{T} L^{\dagger} \omega$, all odd moments about the (zero) mean vanish, that is, $\sum_{i=1}^{n} x_{i}^{2p+1} = 0$ for $p \in \mathbb N_{0}$. Since the Taylor series of the $\arcsin$ about zero features only odd powers, we have $f(0) = \sum_{i=1}^{n} \arcsin(x_{i}) = \sum_{i=1}^{n} \sum_{p=0}^\infty  \frac {(2p)!} {2^{2p}(p!)^2 (2p+1)} x_{i}^{2p+1} = 0$. Statement 1) follows then immediately from Lemma \ref{Lemma: cycle feasibility}.

{\em Statement (C2):} 
By statement (C1), statement (C2) is true if $B^{T} L^{\dagger} \omega$ is symmetric. Statement (C2), can then be proved in a combinatorial fashion by considering all deviations from symmetry arising for three or four oscillators.
In order to continue recall that $\arcsin(x)$ is a super-additive function for $x \in {[0,1]}$ and a sub-additive function for $x \in {[-1,0]}$, that is, $\arcsin(x) + \arcsin(y) < \arcsin(x+y)$ for $x,y > 0$ and $x+y \leq 1$, $\arcsin(x) + \arcsin(y) > \arcsin(x+y)$ for $x,y < 0$ and $x + y \geq -1$, and $\arcsin(x) + \arcsin(y) = \arcsin(x+y)$ for $x=y=0$. We now consider each case $n \in \{3,4\}$ separately.

{\it Proof of sufficiency for} $n=3$: Assume that $\|x\|_{\infty} \leq \sin(\gamma)$. Since the case $f(\lambda = 0) = \fvec 1_{n}^{T}\arcsinbf(x) = 0$ for a symmetric vector $x \in \mathbb R^{3}$ is already proved, we consider now the asymmetric  case $f(\lambda = 0) = \fvec 1_{n}^{T}\arcsinbf(x) > 0$ (the proof of the case $\fvec 1_{n}^{T}\arcsinbf(x) < 0$ is analogous). Necessarily, it follows that at least two elements of $x$ are negative: if one element of $x$ is zero, say $x_{1} = 0$, then we fall back into the symmetric case $x_{2} = -x_{3}$; on the other hand, if only one element is negative, say $x_{1} < 0$ and $x_{2},x_{3} > 0$, then we arrive at a contradiction since $f(\lambda = 0) = \sum_{i=1}^{n} \arcsin(x_{i}) = - \arcsin(x_{2} + x_{3}) + \arcsin(x_{2}) + \arcsin(x_{3}) < 0$ due to super-additivity and since $x_{1} = -x_{2}-x_{3}$.
Hence, without loss of generality, let $x = [a+b,-a,-b]^{T}$ where $a,b > 0$. 
By assumption $\|x\|_{\infty} \leq \sin(\gamma)$. It follows that $a+b \leq \sin(\gamma)$, $a < \sin(\gamma)$, $b < \sin(\gamma)$, and $\subscr{\lambda}{min} = \max\nolimits_{i \in \until
    n} \frac{-\sin(\gamma) - x_{i}}{y_{i}} < 0$. 

Due to super-additivity,  $f(\lambda = 0) = \fvec 1_{n}^{T}\arcsinbf(x)  = \arcsin(a+b) - (\arcsin(a) + \arcsin(b)) > 0$. 
Now we evaluate $f(\lambda)$ at the lower end of its domain $[\subscr{\lambda}{min},\subscr{\lambda}{max}]$ and obtain
\begin{align}
	f(\subscr{\lambda}{min}) =&\; 
	\arcsin(a+b + y_{1} \subscr{\lambda}{min}) + \arcsin(-a + y_{2}\subscr{\lambda}{min}) 
	\nonumber\\&\;
	+ \arcsin(-b + y_{3}\subscr{\lambda}{min})
	\label{eq-SI: f for n=3}
	\,.
\end{align}
By the definition of $\subscr{\lambda}{min}$, at least one summand on the right-hand side of \eqref{eq-SI: f for n=3} equals $-\gamma$. Furthermore, notice that the second and the third summand are negative, and the first summand satisfies $\arcsin(a+b + y_{1}\subscr{\lambda}{min}) \geq -\gamma$. If $\arcsin(a+b + y_{1}\subscr{\lambda}{min}) = -\gamma$, then clearly $f(\subscr{\lambda}{min}) < 0$. In the other case, $\arcsin(a+b + y_{1}\subscr{\lambda}{min}) > -\gamma$, it follows that
\begin{multline*}
	f(\subscr{\lambda}{min}) < 
	\underbrace{\arcsin(a+b+ y_{1}\subscr{\lambda}{min}) - \gamma}_{< 0} 
	\\ + \underbrace{\max \bigl\{\arcsin(-a+ y_{2}\subscr{\lambda}{min}), \arcsin(-b+ y_{3}\subscr{\lambda}{min})\bigr\}}_{< 0} < 0
	\,.
\end{multline*}
Since $f(\subscr{\lambda}{min}) < 0 < f(0) \leq f(\subscr{\lambda}{max})$, it follows from Lemma \ref{Lemma: cycle feasibility} that there exists a stable equilibrium $\theta^{*} \in \bar\Delta_{G}(\gamma)$. The sufficiency is proved for $n = 3$.

{\it Proof of sufficiency} for $n=4$: Assume that $\|x\|_{\infty} \leq \sin(\gamma)$. Without loss of generality, let $\argmax_{i \until 4}\{|x_{i}|\}$ be a singleton (otherwise $x$ is necessarily symmetric), and let $x \in \fvec 1_n^\perp$ be such that $f(\lambda = 0) = \fvec 1_{n}^{T}\arcsinbf(x) > 0$ (the proof of the case $\fvec 1_{n}^{T}\arcsinbf(x) < 0$ is analogous). 
Necessarily, it follows that at least two elements of $x$ are negative: if only one element of $x$ is negative, say $x_{1} < 0$ and $x_{2},x_{3},x_{4} \geq 0$, then we arrive at a contradiction since $f(\lambda = 0) = \sum_{i=1}^{n} \arcsin(x_{i}) = - \arcsin(x_{2} + x_{3} + x_{4}) + \arcsin(x_{2}) + \arcsin(x_{3}) + \arcsin(x_{4})$ is zero only in the symmetric case (for example, $x_{2}=x_{3} = 0 < x_{4} = -x_{1}$) and strictly negative otherwise (due to super-additivity). 
If exactly one element of $x$ is positive (and three are non-postive), say $x = [a+b+c,-a,-b,-c]^{T}$ for $a,b,c \geq 0$ and $a+b + c = \|x\|_{\infty}  \leq \sin(\gamma)$, then, an analogous reasoning to the case $n=3$ leads to $f(\subscr{\lambda}{min})  < 0$.

It remains to consider the case of two positive and two negative entries. Without loss of generality let $x_{1} \geq x_{2} > 0 > x_{3} \geq x_{4}$, where $x_{1} \neq -x_{4}$ and $x_{2} \neq -x_{3}$ (this is the symmetric case), $\sum_{i=1}^{n} x_{i} = 0$, and $\|x\|_{\infty}  \leq \sin(\gamma)$ by assumption. It follows that $\subscr{\lambda}{min} = \max\nolimits_{i \in \until n} \frac{-\sin(\gamma) - x_{i}}{y_{i}} \leq 0$.
Since $f(\lambda = 0) = \fvec 1_{n}^{T} \arcsin(x) > 0$ and $\fvec 1_{n}^{T} x = 0$, it follows from super-additivity that $\|x\|_{\infty} = \max\{x_{1},x_{2}\}$, and the set $\argmax\{x_{1},x_{2}\}$ must be a singleton (otherwise we arrive again at a contradiction or at the symmetric case). Suppose that $\|x\|_{\infty} = \max\{x_{1},x_{2}\} = x_{1}$, then necessarily $|x_{2}| < |x_{3}| \leq |x_{4}| < |x_{1}| \leq \sin(\gamma)$. 
It follows that $\subscr{\lambda}{min}<0$.

%
Again, we evaluate the sum
$f(\subscr{\lambda}{min}) = \sum_{i=1}^{4}\arcsin(x_{i} + y_{i} \subscr{\lambda}{min})$. Notice that the last two summands $\arcsin(x_{3} + y_{3}\subscr{\lambda}{min})$ and $\arcsin(x_{4} + y_{4}\subscr{\lambda}{min})$ are negative (since $0 > x_{3} \geq x_{4}$ and $\subscr{\lambda}{min} < 0$), and the first two summands satisfy $\min \bigl\{\arcsin(x_{1} + y_{1}\subscr{\lambda}{min}),\arcsin(x_{2} + y_{2} \subscr{\lambda}{min}) \bigr\} \geq - \gamma$.
If
$\min \bigl\{\arcsin(x_{1} + y_{1}\subscr{\lambda}{min}),\arcsin(x_{2} + y_{2} \subscr{\lambda}{min}) \bigr\} = - \gamma$, 
we have 
\begin{multline*}
	f(\subscr{\lambda}{min}) =
 \underbrace{\arcsin(x_{3} + y_{3}\subscr{\lambda}{min}) + \arcsin(x_{4} + y_{4}\subscr{\lambda}{min})}_{< 0}\\
+  \underbrace{ \bigl(- \gamma + \max \bigl\{\arcsin(x_{1} + y_{1}\subscr{\lambda}{min}),\arcsin(x_{2} + y_{2} \subscr{\lambda}{min}) \bigr\} \bigr)}_{< 0}
 < 0.
\end{multline*}
In case that
$\min \bigl\{\arcsin(a + y_{1}\subscr{\lambda}{min}),\arcsin(b + y_{2} \subscr{\lambda}{min}) \bigr\} > -\gamma$, 
we obtain 
$\min\nolimits_{i \in \{3,4\}}\{\arcsin(x_{i} + y_{i}\subscr{\lambda}{min})\} = - \gamma$
and
\begin{multline*}
	f(\subscr{\lambda}{min}) <
\arcsin(x_{1} + y_{1}\subscr{\lambda}{min}) + \arcsin(x_{2} + y_{2} \subscr{\lambda}{min}) 
- \gamma \\
+  \max_{i \in \{3,4\}} \bigl\{ \arcsin(x_{i} + y_{i}\subscr{\lambda}{min}) \bigr\}\,.
\end{multline*}
%
Since $|x_{2}| < |x_{3}| \leq |x_{4}| < |x_{1}| \leq \sin(\gamma)$, it readily follows that
$\arcsin(x_{1} + y_{1}\subscr{\lambda}{min}) - \gamma < 0$ and
$\arcsin(x_{2} + y_{2} \subscr{\lambda}{min}) +  \max_{i \in \{3,4\}} \bigl\{ \arcsin(x_{i} + y_{i}\subscr{\lambda}{min}) \bigr\} < 0$. 
We conclude that
$f(\subscr{\lambda}{min}) < 0$.
Since $f(\subscr{\lambda}{min}) < 0 < f(0) \leq f(\subscr{\lambda}{max})$, it follows from Lemma \ref{Lemma: cycle feasibility} that there exists a stable equilibrium $\theta^{*} \in \bar\Delta_{G}(\gamma)$. The sufficiency is proved for $n = 4$.

{\it Proof of necessity for $n \in \{3,4\}$:} 
We prove the necessity by contradiction. Consider a compact cube $\mc Q = [-c,+c]^{|\mc E|} \subset \real^{|\mc E|}$, where $c> 0$ satisfies $c > \sin(\gamma) $. Assume that for every $x \in \fvec 1_n^\perp$, even those satisfying $\|x\|_{\infty} \geq c$, there exists $\lambda \in \mathbb R$ such that the cycle constraint $\fvec 1_{n}^{T}\arcsinbf(x + \lambda y) = 0$ and the norm constraint $\| x + \lambda y \|_{\infty} \leq \sin(\gamma)$ are simultaneously satisfied. For the sake of contradiction, consider now the symmetric case, where  $x \in \fvec 1_n^\perp$ has components $x_{i} \in \{-c,+c,0\}$. As proved in statement (C1), $\lambda^{*} =0$ uniquely solves the cycle constraint equation $0 = f(\lambda^{*} = 0) = \sum_{i=1}^{n} \arcsin(x_{i} + \lambda^{*}y) = \sum_{i=1}^{n} \arcsin(\pm c)$ for any value of $c \in {[0,1]}$. However, the norm constraint $\| x + \lambda^{*} y \|_{\infty} = \|x\|_{\infty} \leq \sin(\gamma)$ can be satisfied only if $\|x\|_{\infty} \leq \sin(\gamma) < c$. We arrive at a contradiction since we assumed $\|x\|_{\infty} \geq c > \sin(\gamma)$. 

We conclude that, if $x = B^{T} L^{\dagger} \omega$ is bounded within a compact cube $\mc Q = [-c,+c]^{|\mc E|} \subset \real^{|\mc E|}$ with $c \leq \sin(\gamma)$, the condition \eqref{eq-SI: sync condition - gamma} is also necessary for synchronization of all considered parametric realizations of $B^{T} L^{\dagger} \omega$ within this compact cube $\mc Q$. For the compact set $\fvec\Omega = \Omega^{n} \in \real^{n}$, it follows that the image $B^{T}L^{\dagger} \circ L\fvec\Omega = B^{T}\fvec\Omega$ equals the compact cube 
$\mc Q = \bigl[- \bigl(\max_{\omega \in \Omega} \omega - \min_{\omega \in \Omega} \omega\bigr) \,,\, +\bigl(\max_{\omega \in \Omega} \omega - \min_{\omega \in \Omega} \omega\bigr)\bigr]^{n}$. 
Hence, the condition \eqref{eq-SI: sync condition - gamma} is necessary for synchronization of all considered parametric realizations of $\omega$ in the compact set $L\fvec\Omega$. 
This concludes the proof of statement (C2).

{\em Statement (C3):} 
To prove the first part of statement (C3) we construct an explicit counterexample. Consider a cycle of length $n \geq 5$ with unit-weighed edges $a_{i,i+1} = 1$, and let
\begin{equation*}
	\omega = \alpha \cdot \begin{bmatrix} 1+\frac{1}{n-3} & 0 & -2 & 1-\frac{1}{n-3}  & \fvec 0_{n-4}\end{bmatrix}^{T} \,, 
\end{equation*} 
where $\alpha \in {[0,1]}$. For $\alpha<1$, these parameters satisfy the necessary conditions \eqref{eq-SI: necessary sync condition - 1} and \eqref{eq-SI: necessary sync condition - 2}. For the given parameters, we obtain the non-symmetric vector $x = B^{T}L^{\dagger} \omega$ given by
\begin{equation}
	x = B^{T}L^{\dagger} \omega = \alpha \cdot \begin{bmatrix} -1 & -1 & 1 & \frac{1}{n-3}\fvec 1_{(n-3)}  \end{bmatrix}^{T}
	\,.
\end{equation}
Notice that $\|x \|_{\infty} = \alpha < 1$, $x$ is non-symmetric, and $x$ is the minimum $\infty$-norm vector $\psi = x + \lambda \fvec 1_{n}$ for $\lambda \in \mathbb R$. 

In the following, we will show that there exists no equilibrium in $\lim_{\gamma \uparrow \pi/2} \bar\Delta_{G}(\gamma) = \bar\Delta_{G}(\pi/2)$. 
Consider the function $f(\lambda) = \arcsinbf(\fvec 1_{n}^{T}x + \lambda \fvec 1_{n})$ whose domain is centered symmetrically around zero, that is, $\subscr{\lambda}{max} = - \subscr{\lambda}{min} =  \lim_{\gamma \uparrow \pi/2} (\sin(\gamma) - \alpha) = 1 - \alpha$. Notice that the domain of $f$ vanishes as $\alpha \uparrow 1$.
 For $n \to \infty$ we have that 
$\lim_{n \to \infty} f(0) = 
	-\arcsin( \alpha) + \lim_{n \to \infty} (n-3) \cdot \arcsin(\alpha/(n-3)) = -\arcsin(\alpha) + \alpha
$.
Hence, as $n \to \infty$ and $\alpha \uparrow 1$, we obtain $f(0) = - \frac{\pi}{2}+1 < 0$. Due to continuity of $f$ with respect to $\alpha,n,\lambda$, we conclude that for $n \geq 5$ sufficiently large and $\alpha < 1$ sufficiently large%
, there is no $\lambda^{*}$ such that $f(\lambda^{*}) = 0$. Hence, the condition $\| x\|_{\infty} = \|B^{T} L^{\dagger} \omega\|_{\infty} < 1$ does generally not guarantee existence of $\theta^{*} \in \bar\Delta_{G}(\pi/2) \supset \Delta_{G}(\pi/2)$.
A second numerical counterexample will be constructed in Example \ref{Example: cyclic counter example} below.

A sufficient condition for the existence of an equilibrium $\theta^{*} \in \Delta_{G}(\gamma)$ is $x_{i} + \subscr{\lambda}{min} y_{i} \leq 0 \leq x_{i} + \subscr{\lambda}{max} y_{i}$ for each $i \in \until n$, which is equivalent to condition \eqref{eq-SI: sufficient condition for cyclic equilibrium}. Indeed if condition \eqref{eq-SI: sufficient condition for cyclic equilibrium} holds, we obtain $f(\subscr{\lambda}{min}) = \sum_{i=1}^{n} \arcsin(x_{i} + \subscr{\lambda}{min} y_{i})$ as a sum of nonpositive terms and $f(\subscr{\lambda}{max}) = \sum_{i=1}^{n} \arcsin(x_{i} + \subscr{\lambda}{max} y_{i})$ as a sum of nonnegative terms. 
Since $\fvec 1_{n}^{T}x = 0$ and generally $x \neq \fvec 0_{n}$ (otherwise we fall back in the symmetric case), at least one $x_{i}$ is strictly negative and at least one $x_{i}$ is strictly positive, and it follows that $f(\subscr{\lambda}{min}) < 0 < f(\subscr{\lambda}{max})$. The statement (C3) follows then immediately from Lemma \ref{Lemma: cycle feasibility}. 
This concludes the proof.
\end{proof}

In the following, define a {\em patched network} $\{G(\mc V,\mc
E,A),\omega\}$ as a collection of subgraphs and natural frequencies $\omega
\in \fvec 1_n^\perp$, where {\em(i)} each subgraph is connected, {\em(ii)}
in each subgraph one of the conditions (G1),(G2),(G3),(G4), (C1), or (C2)
is satisfied, {\em(iii)} the subgraphs are connected to another through
edges $\{i,j\} \in \mc E$ satisfying $\| (e_{|\mc E|}^{i} - e_{|\mc
  E|}^{j})^{T} L^{\dagger} \omega\|_{\infty} \leq \sin(\gamma)$, and
{\em(iv)} the set of cycles in the overall graph $G(\mc V,\mc E,A)$ is
equal to the union of the cycles of all subgraphs. Since a patched network
satisfies the synchronization condition \eqref{eq-SI: sync condition - gamma}
as well the norm and cycle constraints, we can state the following result.

\begin{corollary}{\bf(Sync condition for a patched network)}
\label{Corollary: Sync condition for patched graphs}
Consider the Kuramoto model \eqref{eq-SI: Kuramoto model} with a patched network $\{G(\mc V,\mc E,A),\omega\}$, and let $\gamma \in {[0,\pi/2[}$. There is an exponentially stable equilibrium $\theta^{*} \in \bar\Delta_{G}(\gamma)$ if condition \eqref{eq-SI: sync condition - gamma} holds.
\end{corollary}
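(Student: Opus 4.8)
The plan is to reduce the claim to an existence statement for the auxiliary fixed-point equation via statement~(iii) of Theorem~\ref{Theorem: Properties of the auxiliary-fixed point equations}: it suffices to produce an edge vector $\psi\in\real^{|\mc E|}$ of the form $\psi = B^{T}L^{\dagger}\omega + \subscr{\psi}{hom}$ with $\diag(\{a_{ij}\}_{\{i,j\}\in\mc E})\subscr{\psi}{hom}\in\Ker(B)$, satisfying the norm constraint $\|\psi\|_{\infty}\leq\sin(\gamma)$ and the cycle constraint $\arcsinbf(\psi)\in\Img(B^{T})$; that theorem then delivers an equilibrium $\theta^{*}\in\bar\Delta_{G}(\gamma)$ whose orbit $[\theta^{*}]$ is exponentially stable. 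Write $G_{1},\dots,G_{m}$ for the subgraphs of the patched network, with incidence matrices $B_{k}$ and Laplacians $L_{k}$; the subgraphs are edge-disjoint, and every other edge of $G$ is a patching (bridge) edge lying on no cycle. Property~(iv) of a patched network is exactly the assertion that the simple cycles of $G$ are the simple cycles of the $G_{k}$, i.e.\ that $\Ker(B)=\bigoplus_{k=1}^{m}\Ker(B_{k})$ as an internal direct sum, each $\Ker(B_{k})$ being regarded as a subspace of $\real^{|\mc E|}$ supported on the edges of $G_{k}$. Hence $\subscr{\psi}{hom}$ may be chosen independently on each $G_{k}$ and must vanish on the patching edges, and the cycle constraint $\arcsinbf(\psi)\perp\Ker(B)$ splits into the $m$ local constraints $\arcsinbf(\psi|_{\mc E_{k}})\perp\Ker(B_{k})$.

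The next step is to identify the problem that $G$ induces on each $G_{k}$. Let $v_{k}\in\real^{|\mc V_{k}|}$ be the restriction of the node potential $L^{\dagger}\omega$ to the vertices of $G_{k}$, and set $\tilde\omega_{k}\triangleq L_{k}v_{k}$. Then $\fvec 1^{T}\tilde\omega_{k}=\fvec 1^{T}L_{k}v_{k}=0$, and a short computation using $L_{k}L_{k}^{\dagger}=I-\frac{1}{|\mc V_{k}|}\fvec 1\fvec 1^{T}$ together with $B_{k}^{T}\fvec 1=\fvec 0$ gives $B_{k}^{T}L_{k}^{\dagger}\tilde\omega_{k}=B_{k}^{T}v_{k}=(B^{T}L^{\dagger}\omega)|_{\mc E_{k}}$. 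Thus on the edges of $G_{k}$ the auxiliary fixed-point problem is precisely that of the Kuramoto model on $G_{k}$ with natural frequencies $\tilde\omega_{k}\in\fvec 1^{\perp}$, and its particular solution obeys $\|B_{k}^{T}L_{k}^{\dagger}\tilde\omega_{k}\|_{\infty}=\|(B^{T}L^{\dagger}\omega)|_{\mc E_{k}}\|_{\infty}\leq\|B^{T}L^{\dagger}\omega\|_{\infty}\leq\sin(\gamma)$ by condition~\eqref{eq-SI: sync condition - gamma}. By property~(ii) the pair $(G_{k},\tilde\omega_{k})$ is covered by one of the cases (G1)--(G4) of Theorem~\ref{Theorem: Exact and tight synchronization conditions for extremal graphs} or (C1)--(C2) of Theorem~\ref{Theorem: cycle graph}; since the proofs of all of these argue through statement~(iii) of Theorem~\ref{Theorem: Properties of the auxiliary-fixed point equations} (for cycles via Lemma~\ref{Lemma: cycle feasibility}), the bound $\|B_{k}^{T}L_{k}^{\dagger}\tilde\omega_{k}\|_{\infty}\leq\sin(\gamma)$ furnishes a homogeneous component $\psi_{\mathrm{hom},k}$ supported on $\mc E_{k}$, with $\diag(\{a_{ij}\}_{\{i,j\}\in\mc E_{k}})\psi_{\mathrm{hom},k}\in\Ker(B_{k})$, such that $\psi_{k}\triangleq B_{k}^{T}v_{k}+\psi_{\mathrm{hom},k}$ satisfies $\|\psi_{k}\|_{\infty}\leq\sin(\gamma)$ and $\arcsinbf(\psi_{k})\perp\Ker(B_{k})$.

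Finally, I would reassemble the pieces. Define $\psi\in\real^{|\mc E|}$ by $\psi|_{\mc E_{k}}=\psi_{k}$ on each subgraph and $\psi_{\{i,j\}}=(B^{T}L^{\dagger}\omega)_{\{i,j\}}$ on each patching edge, and put $\subscr{\psi}{hom}\triangleq\sum_{k=1}^{m}\psi_{\mathrm{hom},k}$, extended by zero on the patching edges. Then $\psi=B^{T}L^{\dagger}\omega+\subscr{\psi}{hom}$ and $\diag(\{a_{ij}\}_{\{i,j\}\in\mc E})\subscr{\psi}{hom}=\sum_{k}\diag(\{a_{ij}\}_{\{i,j\}\in\mc E_{k}})\psi_{\mathrm{hom},k}\in\sum_{k}\Ker(B_{k})=\Ker(B)$. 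The norm constraint holds since $\|\psi\|_{\infty}$ is the maximum of the subgraph values $\|\psi_{k}\|_{\infty}\leq\sin(\gamma)$ and the patching-edge values $|(B^{T}L^{\dagger}\omega)_{\{i,j\}}|\leq\sin(\gamma)$, the latter being the bound recorded in property~(iii) of a patched network (and anyway implied by \eqref{eq-SI: sync condition - gamma}). For the cycle constraint, by the direct-sum decomposition it suffices to verify $c^{T}\arcsinbf(\psi)=0$ for $c\in\Ker(B_{k})$, $k=1,\dots,m$; as such a $c$ is supported on $\mc E_{k}$, $c^{T}\arcsinbf(\psi)=c^{T}\arcsinbf(\psi_{k})=0$. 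Hence $\psi$ meets every hypothesis of statement~(iii) of Theorem~\ref{Theorem: Properties of the auxiliary-fixed point equations}, which yields the exponentially stable equilibrium $\theta^{*}\in\bar\Delta_{G}(\gamma)$.

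I expect the main obstacle to be the second paragraph: correctly isolating the effective natural frequency $\tilde\omega_{k}=L_{k}(L^{\dagger}\omega)|_{\mc V_{k}}$ that the rest of the network imposes on each subgraph, checking that $(G_{k},\tilde\omega_{k})$ genuinely satisfies the structural and parametric hypotheses invoked in property~(ii), and, most delicately, extracting from the proofs of (G1)--(G4) and (C1)--(C2) not merely the existence of a subgraph equilibrium but a concrete admissible homogeneous component $\psi_{\mathrm{hom},k}$ with the stated membership $\diag(\{a_{ij}\})\psi_{\mathrm{hom},k}\in\Ker(B_{k})$. A further point that needs care is the equivalence, used in the first paragraph, between property~(iv) and the internal direct-sum decomposition $\Ker(B)=\bigoplus_{k}\Ker(B_{k})$, which relies on the patching edges being bridges.
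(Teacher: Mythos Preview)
Your approach is correct and is essentially the same as the paper's, only far more detailed: the paper gives no formal proof, merely the one-sentence justification preceding the corollary (``Since a patched network satisfies the synchronization condition~\eqref{eq-SI: sync condition - gamma} as well the norm and cycle constraints, we can state the following result''). You have carefully unpacked exactly what that sentence means---the direct-sum decomposition of $\Ker(B)$ from property~(iv), the reduction to the subgraph problems via the effective frequencies $\tilde\omega_{k}=L_{k}(L^{\dagger}\omega)|_{\mc V_{k}}$, and the reassembly of the local solutions---and the obstacles you flag in your final paragraph are precisely the places where the paper's terse phrasing leaves work for the reader.
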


\begin{example}{\bf(Numerical cyclic counterexample and its intuition)}
\label{Example: cyclic counter example}
\normalfont
In the proof of Theorem \ref{Theorem: cycle graph}, we provided an analytic counterexample which demonstrates that condition \eqref{eq-SI: sync condition - gamma} is not sufficiently tight for synchronization in sufficiently large cyclic networks. Here, we provide an additional numerical counterexample. Consider a cycle family of length $n = 5 + 3 \cdot p$, where $p \in \mathbb N_{0}$ is a nonnegative integer.
Without loss of generality, assume that the edges are labeled by $\{i,i+1\} \pmod n$ for $i \in \until n$ such that $ \Ker(B) = \textup{span}(\fvec 1_{n})$.
Assume that all edges are unit-weighed $a_{i,i+1\pmod n} = 1$ for $i \in \until n$. Consider $\alpha \in {[0,1[}$, and let 
\begin{equation*}
	\omega = \alpha \cdot \begin{bmatrix} -1/2 & 2 & \fvec 0_{p+1} & 3/2 &  \fvec 0_{2p+1}\end{bmatrix}^{T} \,. 
\end{equation*} 
For $n=5$ $(p=1)$ the graph and the network parameters are illustrated in Figure \ref{Fig: cyclic counter example}. 
For the given network parameters, we obtain the non-symmetric vector $B^{T}L^{\dagger} \omega$ given by
\begin{equation*}
	B^{T}L^{\dagger} \omega = \alpha \cdot \begin{bmatrix} 1 & -1 & -\fvec 1_{(n-2)/3} & 1/2 \cdot \fvec 1_{2(n-2)/3} \end{bmatrix}^{T}
	\,.
\end{equation*}
Analogously to the example provided in the proof of Theorem \ref{Theorem: cycle graph}, $\|B^{T} L^{\dagger} \omega \|_{\infty} = \alpha$ and $B^{T}L^{\dagger} \omega$ is the minimum $\infty$-norm vector $B^{T}L^{\dagger} \omega + \lambda \fvec 1_{n}$ for $\lambda \in \mathbb R$.
In the limit $\alpha \uparrow 1$, the necessary condition \eqref{eq-SI: necessary sync condition - 1} is satisfied with equality. In Figure \ref{Fig: cyclic counter example}, for $\alpha \uparrow 1$, we have that $\omega_{2} = 2$, and the necessary condition \eqref{eq-SI: necessary sync condition - 1} reads as $a_{12} + a_{23} = |\omega_{2}| = 2$, and the corresponding equilibrium equation $\sin(\theta_{1} - \theta_{2}) + \sin(\theta_{3} - \theta_{2}) = 2$ can only be satisfied if $\theta_{1} - \theta_{2} = \pi/2$ and $\theta_{3} - \theta_{2} = \pi/2$. Thus, with two fixed edge differences there is no more ``wiggle room'' to compensate for the effects of $\omega_{i}$, $i \in\{1,3,4,5\}$. As a consequence, there is no equilibrium $\theta^{*} \in \bar\Delta_{G}(\pi/2)$ for $\alpha = 1$ or equivalently $\|B^{T} L^{\dagger} \omega \|_{\infty} = 1$. 
Due to continuity of the equations \eqref{eq-SI: fixed-point equations - component-wise} with respect to $\alpha$, we conclude that for $\alpha < 1$ sufficiently large there is no equilibrium either.
Numerical investigations show that this conclusion is true, especially for very large cycles. For the extreme case $p = 10^{7}$, we obtain the critical threshold $\alpha \approx 0.9475$ where $\theta^{*}\in \bar\Delta_{G}(\pi/2)$ ceases to exist.
\oprocend
\begin{figure}[htbp]
	\centering{
	\includegraphics[width = 0.7\columnwidth]{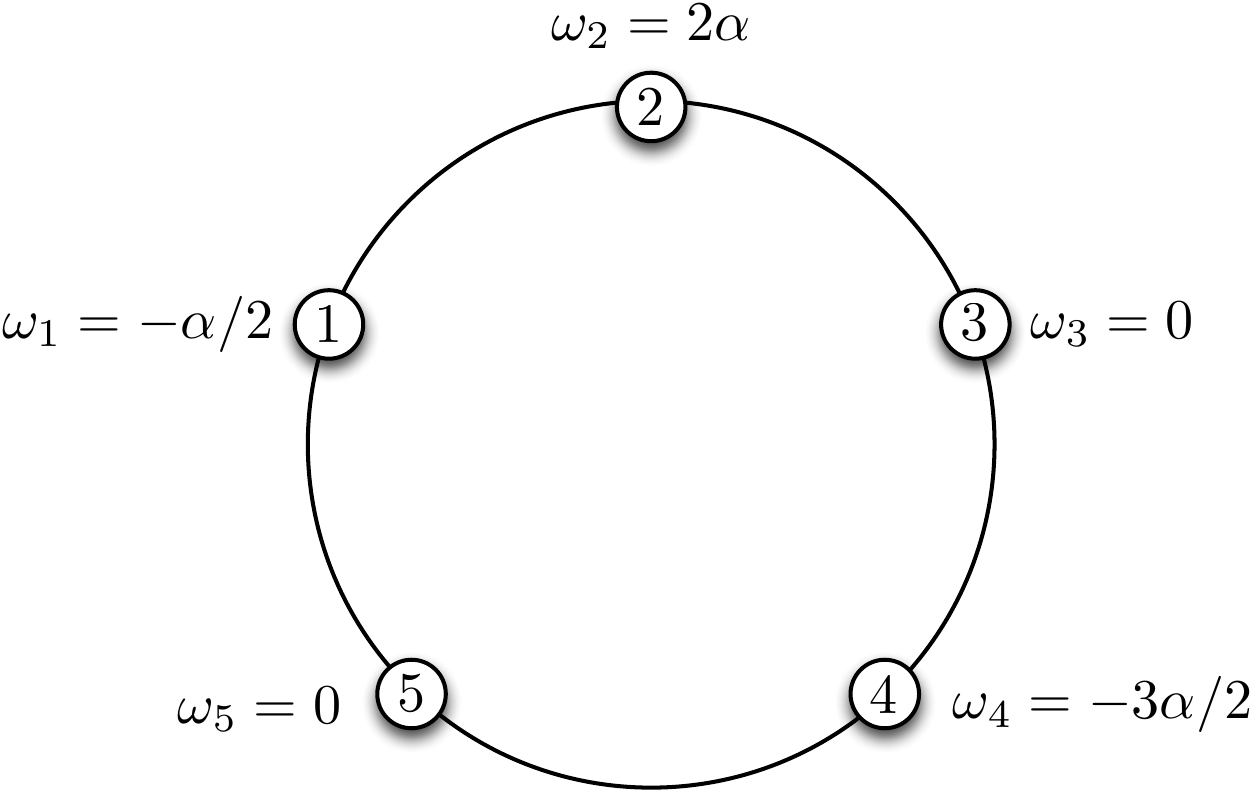}
	\caption{Cycle graph with $n=5$ nodes and non-symmetric choice of $\omega$.}
	\label{Fig: cyclic counter example}
	}
\end{figure}
\end{example}

Notice that both the counterexample used in the proof of Theorem \ref{Theorem: cycle graph} and the one in Example \ref{Example: cyclic counter example} are at the boundary of the admissible parameter space, where the necessary condition \eqref{eq-SI: necessary sync condition - 1} is marginally satisfied. In the next section, we establish that such ``degenerate'' counterexamples do almost never occur for generic network topologies and parameters.

To conclude this section, we remark that the main technical difficulty in proving sufficiency of the condition \eqref{eq-SI: sync condition - gamma} for arbitrary graphs is the compact state space $\mathbb T^{n}$ and the non-monotone sinusoidal coupling among the oscillators. Indeed, if the state space was $\mathbb R^{n}$ and if the oscillators were coupled via non-decreasing and odd functions, then the synchronization problem simplifies tremendously and the counterexamples in the proof of Theorem \ref{Theorem: cycle graph} and in Example \ref{Example: cyclic counter example} do not occur; see \citesec{MB-DZ-FA:11} for an elegant analysis based on optimization theory.

\section{Statistical Synchronization Assessment}
\label{Section: Statistical Evaluations}

After having established that the synchronization condition \eqref{eq-SI: sync condition - gamma} is necessary and sufficient for particular network topologies and parameters, we now validate both its correctness and its accuracy for arbitrary networks.

\subsection{Statistical Assessment of Correctness}

%
Extensive simulation studies lead us to the conclusion that condition \eqref{eq-SI: sync condition - gamma} is correct in general and guarantees the existence of a stable equilibrium $\theta^{*} \in \bar\Delta_{G}(\gamma)$. In order to validate this hypothesis we invoke probability estimation through Monte Carlo techniques, see \citesec[Section 9]{RT-GC-FD:05} and \citesec[Section 3]{GCC-FD-RT:11} for a comprehensive review.

We consider the following {\em nominal random networks} $\{G(\mc V,\mc E, A),\omega\}$ parametrized by the number $n \geq 2$ of nodes, the width $\alpha>0$ of the sampling region for each natural frequency $\omega_{i}$ and $i \in \until n$, and a connected random graph model $\textup{RGM}(p) = G(\mc V,\mc E(p))$ with node set $\mc V = \until n$ and edge set $\mc E = \mc E(p)$ induced by a coupling parameter $p \in {[0,1]}$. In particular, given the four parameters $(n,\textup{RGM},p,\alpha)$, a nominal random network is constructed as follows:

\begin{enumerate}

	
	\item[{\em (i)}] {\em Network topology:} To construct the network topology, we consider three different one-parameter families of random graph models $\textup{RGM}(p) = G(\mc V,\mc E(p))$, each parameterized by the number of nodes $n \geq 2$ and a coupling parameter $p \in {[0,1]}$. Specifically, we consider (i) an Erd\"os-R\'enyi random graph model (\textup{RGM} = \textup{ERG}) with probability $p$ of connecting two nodes, 
(ii) a random geometric graph model (\textup{RGM} = \textup{RGG}) with sampling region $[0,1]^{2} \subset\mathbb R^{2}$, connectivity radius $p$, and 
(iii) a Watts-Strogatz small world network (\textup{RGM} = \textup{SMN}) \citesec{DJW-SHS:98} with initial coupling of each node to its two nearest neighbors and rewiring probability $p$. 
If, for a given $n \geq 2$ and $p \in [0,1]$, the realization of a random graph model is not connected, then this realization is discarded and new realization is constructed;

	\item[{\em (ii)}] {\em Coupling weights:} For a given random graph $G(\mc V,\mc E(p))$, for each edge $\{i,j\} \in \mc E(p)$, the coupling weight $a_{ij}=a_{ji}>0$ is sampled from a uniform distribution supported on the interval ${[0.5,5]}$; 
	
	\item[{\em (iii)}] {\em Natural frequencies:} For a given $n \geq 2$ and $\alpha>0$, the natural frequencies $\omega \in \fvec 1_{n}^{\perp}$ are constructed in two steps. In a first step, $n$ real numbers $q_{i}$, $i \in \until n$, are sampled from a uniform distribution supported on ${[-\alpha/2,+\alpha/2]}$, where $\alpha>0$. In a second step, by subtracting the average $\sum_{i=1}^{n} q_{i}/n$ we define $\omega_{i} = q_{i} - \sum_{i=1}^{n} q_{i}/n$ for $i \in \until n$ and obtain $\omega = (\omega_{1},\dots,\omega_{n}) \in \fvec 1_{n}^{\perp}$; and 
	
	\item[{\em (iv)}]  {\em Parametric realizations:} We consider forty realizations of the parameter 4-tuple $(n,\textup{RGM},p,\alpha)$ covering a wide range of network sizes $n$, coupling parameters $p$, and natural frequencies $\omega$, which are listed in the first column of Table \ref{Table: Monte Carlo Simulations}. The choices of $\alpha$ in these forty cases is such that the resulting equilibrium angles $\theta^*$ satisfy on average $\max_{\{i,j\} \in \mc E} | \theta_{i}^{*} - \theta_{j}^{*}| \approx \pi/3$.
	
\end{enumerate}
%
%
For each of the forty parametric realizations in {\em (iv)}, we generate 30000 nominal models of $\omega \in \fvec 1_{n}^{\perp}$ and $G(\mc V,\mc E, A)$ (conditioned on connectivity) as detailed in {\em (i) - (iii)} above, each satisfying $\|B^{T} L^{\dagger} \omega \|_{\infty} < 1$. If a sample does not satisfy $\|B^{T} L^{\dagger} \omega \|_{\infty} < 1$, it is discarded and a new sample is generated. Hence, we obtain $1.2 \cdot 10^{6}$ nominal random networks $\{G(\mc V,\mc E, A),\omega\}$, each with a connected graph $G(\mc V,\mc E, A)$ and $\omega \in \fvec 1_{n}^{\perp}$ satisfying $\|B^{T} L^{\dagger} \omega \|_{\infty} \leq \sin(\gamma)$ for some $\gamma < \pi/2$. 

For each case and each instance, we numerically solve equation \eqref{eq-SI: fixed-point equations - vector form} with accuracy $10^{-6}$ and test the hypothesis 
\begin{equation*}
	\mc H:\, \left\| B^{T} L^{\dagger} \omega \right\|_{\infty} \leq \sin(\gamma) \;\implies\; \exists\; \theta^{*} \in \bar\Delta_{G}(\gamma)
\end{equation*} 
with an accuracy $10^{-4}$. The results are reported in Table \ref{Table: Monte Carlo Simulations} together with the empirical probability that the hypothesis $\mc H$ is true for a set of parameters $(n,\textup{RGM},p,\alpha)$. Given a set of parameters $(n,\textup{RGM},p,\alpha)$ and 30000 samples, the empirical probability is calculated as
\begin{equation*}
\widehat{ \mbox{\tt{Prob}} }_{(n,\textup{RGM},p,\alpha)} \!=\! \frac{\mbox{number of samples satisfying}\, \bigl( \mc H \mbox{\tt{\,is\,true}}  \bigr)}{ 30000 }
\,.
\end{equation*}
Given an accuracy level $\epsilon \in {]0,1[}$ and a confidence level $\eta \in {]0,1[}$, we ask for the number of samples $N$ such that the true probability $\mbox{\tt{Prob}}_{(n,\textup{RGM},p,\alpha)} \bigl( \mbox{\tt $\mc H$\,is\,true} \bigr)$ equals the empirical probability $\widehat{ \mbox{\tt{Prob}} }_{(n,\textup{RGM},p,\alpha)}$ with confidence level greater  than $1-\eta$ and accuracy at least $\epsilon$, that is, 
\begin{multline*}
	\mbox{\tt{Prob}} \left(
	\bigl|
	\mbox{\tt{Prob}}_{(n,\textup{RGM},p,\alpha)} \bigl( \mbox{\tt $\mc H$\,is\,true} \bigr) - \widehat{ \mbox{\tt{Prob}} }_{(n,\textup{RGM},p,\alpha)}
	\bigr| < \epsilon
	\right)\\
	> 1-\eta
	\,.
\end{multline*}
By the Chernoff bound \citesec[Equation (9.14)]{RT-GC-FD:05}, the number of samples $N$ for a given accuracy $\epsilon$ and confidence $\eta$ is given as
\begin{equation}
	N \geq \frac{1}{2 \epsilon^{2}} \log \frac{2}{\eta}.
	\label{eq-SI: Chernoff bound}
\end{equation}
For $\epsilon = \eta = 0.01$, the Chernoff bound \eqref{eq-SI: Chernoff bound} is satisfied for $N \geq 26492$ samples.
By invoking the Chernoff bound \eqref{eq-SI: Chernoff bound}, our simulations studies establish the following statement:
\begin{quotation}
\noindent
With 99\% confidence level, there is at least 99\% accuracy that the hypothesis $\mc H$ is true with probability 99.97 \% for a nominal network constructed as in {\em (i) - (iv)} above.\\

\noindent
In particular, for a nominal network with parameters $(n,\textup{RGM},p,\alpha)$ constructed as in {\em (i) - (iv)} above, with 99\% confidence level, there is at least 99\% accuracy that the probability $\mbox{\tt{Prob}}_{(n,\textup{RGM},p,\alpha)} \bigl( \mbox{\tt $\mc H$\,is\,true} \bigr)$  equals the empirical probability $\widehat{ \mbox{\tt{Prob}} }_{(n,\textup{RGM},p,\alpha)}$, as listed in Table~\ref{Table: Monte Carlo Simulations}, that is,
\end{quotation}
\begin{multline*}
	\mbox{\tt{Prob}} \Bigl(
	\bigl|
	\mbox{\tt{Prob}}_{(n,\textup{RGM},p,\alpha)} \bigl( \mbox{\tt $\mc H$ is true} \bigr) \\
	- \widehat{ \mbox{\tt{Prob}} }_{(n,\textup{RGM},p,\alpha)}
	\bigr| < 0.01
	\Bigr)
	> 0.99
	\,.
\end{multline*}
It can be seen in Table \ref{Table: Monte Carlo Simulations} that for large and dense networks the hypothesis $\mc H$ is always true, whereas for small and sparsely connected networks the hypothesis $\mc H$
can marginally fail with an error of
order $\mc O(10^{-4})$. Thus, for these cases a tighter condition of the form $\| B^{T}
L^{\dagger} \omega \|_{\infty} \!\leq\! \sin(\gamma) - \mc O(10^{-4})$ is
required to establish the existence of $\theta^{*} \in
\bar\Delta_{G}(\gamma)$. These results indicate that ``degenerate'' 
topologies and parameters (such as the large and isolated cycles used in the proof of Theorem \ref{Theorem: cycle graph} and in Example \ref{Example: cyclic counter example}) are more likely to occur in small networks. 

\subsection{Statistical Assessment of Accuracy}

As established in the previous subsection, the synchronization condition \eqref{eq-SI: sync condition - gamma} is a scalar synchronization test with predictive 
power for almost all network topologies and parameters. This remarkable fact is difficult to establish via statistical studies in the vast parameter space.
Since we proved in statement (G4) of Theorem \ref{Theorem: Exact and tight synchronization conditions for extremal graphs} that condition \eqref{eq-SI: sync condition - gamma} is exact for sufficiently small pairwise phase cohesiveness $| \theta_{i} - \theta_{j} | \ll 1$ (or equivalently, for sufficiently identical natural frequencies $\omega_{i}$ and sufficiently strong coupling), we investigate the other extreme $\max_{\{i,j\} \in \mc E} | \theta_{i} - \theta_{j} | = \pi/2$. To test the corresponding synchronization condition \eqref{eq-SI: sync condition} in a low-dimensional parameter space, we consider a complex network of Kuramoto oscillators
\begin{equation}
	\dot \theta_{i}
	=
	\omega_{i} - \,K \cdot  \sum\nolimits_{j=1}^{n}  a_{ij} \, \sin(\theta_{i}-\theta_{j})
	\,, \quad i \in \until n \,,
	\label{eq-SI: topological Kuramoto model}
\end{equation}
where $K>0$ is the coupling gain among the oscillators and the coupling weights are assumed to be unit-weighted, that is, $a_{ij} = a_{ji} = 1$ for all $\{i,j\} \in \mc E$.
If $L$ is the unweighted Laplacian matrix, then
 condition \eqref{eq-SI: sync condition} reads as $K > \subscr{K}{critical}
\triangleq \| L^{\dagger} \omega \|_{\mc E,\infty}$. Of course, the condition $K > \subscr{K}{critical}$ is only sufficient and synchronization may occur for a smaller value of $K$  than $\subscr{K}{critical}$. 
In order to test the accuracy of the condition $K > \subscr{K}{critical}$, 
 we numerically found the smallest value of $K$ leading to synchrony for various network sizes,
connected random graph models, and sample distributions of the natural frequencies. 
Here we discuss in detail the construction of the random network topologies and parameters leading to the data displayed in Figure 3 of the main manuscript.

We consider the following {\em nominal random networks} $\{G(\mc V,\mc E, A),\omega\}$ parametrized by the number of nodes $n \in \{10,20,40,160\}$, the sampling distribution \textup{SD} for the natural frequencies $\omega  \in \fvec 1_{n}^{\perp}$, and a connected random graph model $\textup{RGM}(p) = G(\mc V,\mc E(p))$ with node set $\mc V = \until n$ and edge set $\mc E = \mc E(p)$ induced by a coupling parameter $p \in {[0,1]}$. In particular, given the four parameters $(n,\textup{RGM},p,\textup{SD})$, a nominal random network is constructed as follows:

\begin{enumerate}
	
	\item[{\em (i)}] {\em Network topology and weights:}  To construct the network topology, we consider three different one-parameter families of random graph models $\textup{RGM}(p) = G(\mc V,\mc E(p))$, each parameterized by the number of nodes $n$ and a coupling parameter $p \in {[0,1]}$. Specifically, we consider (i) an Erd\"os-R\'enyi random graph model (\textup{RGM} = \textup{ERG}) with probability $p$ of connecting two nodes, 
(ii) a random geometric graph model (\textup{RGM} = \textup{RGG}) with sampling region $[0,1]^{2} \subset\mathbb R^{2}$, connectivity radius $p$, and 
(iii) a Watts-Strogatz small world network (\textup{RGM} = \textup{SMN}) \citesec{DJW-SHS:98} with initial coupling of each node to its two nearest neighbors and rewiring probability $p$. 
If, for a given $n$ and $p \in [0,1]$, the realization of a random graph model is not connected, then this realization is discarded and new realization is constructed. All nonzero coupling weights are set to one, that is, $a_{ij}=a_{ji}=1$ for $\{i,j\} \in \mc E$; 
	
	\item[{\em (ii)}] {\em Natural frequencies:} For a given network size $n$ and sampling distribution \textup{SD}, the natural frequencies $\omega \in \fvec 1_{n}^{\perp}$ are constructed in three steps. In a first step, the sampling distribution of the natural frequencies is chosen. For classic Kuramoto oscillators with uniform coupling $a_{ij} = K/n$ for distinct $i,j \in \until n$, we know that the two extreme sampling distributions (with bounded support) are the bipolar discrete and the uniform distribution leading to the largest and smallest critical coupling, respectively~\citesec{FD-FB:10w}. 
	Here we choose a  uniform distribution ($\textup{SD} = \textup{uniform}$) supported on ${[-1,+1]}$ or a bipolar discrete distribution ($\textup{SD} = \textup{bipolar}$) supported on ${\{-1,+1\}}$. In a second step, $n$ real numbers $q_{i}$, $i \in \until n$, are sampled from the distribution \textup{SD}. In a third step, by subtracting the average $\sum_{i=1}^{n} q_{i}/n$ we define $\omega_{i} = q_{i} - \sum_{i=1}^{n} q_{i}/n$ for $i \in \until n$ and obtain $\omega = (\omega_{1},\dots,\omega_{n}) \in \fvec 1_{n}^{\perp}$; and
		
	\item[{\em (iii)}]  {\em Parametric realizations:} We consider 600 realizations of parameter 4-tuple $(n,\textup{RGM},p,\textup{SD})$ covering a wide range of network sizes $n$, coupling parameters $p$, and natural frequencies $\omega$. All 600 realizations are shown in Figure 3 in the main manuscript.
	
\end{enumerate}

For each of the 600 parametric realizations in {\em (iii)}, we generate 100 nominal models of $\omega \in \fvec 1_{n}^{\perp}$ and $G(\mc V,\mc E, A)$ (conditioned on connectivity) as detailed in {\em (i) - (ii)} above. Hence, we obtain 60000 nominal random networks $\{G(\mc V,\mc E, A),\omega\}$, each with a connected graph $G(\mc V,\mc E, A)$ and natural frequencies $\omega \in \fvec 1_{n}^{\perp}$.
For each sample network, we consider the complex Kuramoto model \eqref{eq-SI: topological Kuramoto model} and numerically find the smallest value of $K$ leading to synchrony with cohesive phases satisfying $\max_{\{i,j\} \in \mc E} | \theta_{i} - \theta_{j}| = \pi/2$. The critical value of $K$ is found iteratively by integrating the Kuramoto dynamics \eqref{eq-SI: topological Kuramoto model} and decreasing $K$ if the steady state $\theta^{*}$ satisfies $\max_{\{i,j\} \in \mc E} | \theta_{i}^{*} - \theta_{j}^{*}| < \pi/2$ and increasing $K$ otherwise. We repeat this iteration until a steady state $\theta^{*}$ is found satisfying $\max_{\{i,j\} \in \mc E} | \theta_{i} - \theta_{j}| = \pi/2$ with accuracy $10^{-3}$.
Our findings are reported in Figure 3 in the main manuscript, where each data point corresponds to the sample mean of 100 nominal models with the same parameter 4-tuple $(n,\textup{RGM},p,\textup{SD})$.

\section{Synchronization Assessment for Power Networks}
\label{Section: Simulation Data for RTS 96}

We envision that our proposed condition \eqref{eq-SI: sync condition - gamma} can be applied to quickly assess synchronization and robustness in power networks under volatile operating conditions. 
%
Since real-world power networks are carefully engineered systems with particular network topologies and parameters, 
they cannot be reduced to the standard topological random graph models \citesec{ZW-AS-RJT:10}, and 
we do not extrapolate the statistical results from the previous section to power grids. Rather, we consider ten widely-established and commonly studied IEEE power network test cases provided by \citesec{RDZ-CEM-DG:11,CG-PW-PA-RA-MB-RB-QC-CF-SH-SK-WL-RM-DP-NR-DR-AS-MS-CS:99} to validate the correctness and the predictive power of our synchronization condition \eqref{eq-SI: sync condition - gamma}.

\subsection{Statistical Synchronization Assessment for IEEE Systems}

We validate the synchronization condition \eqref{eq-SI: sync condition - gamma} in a smart power grid scenario subject to fluctuations in load and generation and equipped with fast-ramping generation and controllable demand. Here, we report the detailed simulation setup leading to the results shown in Table 1 of the main manuscript.

The nominal simulation parameters for the ten IEEE test cases can be found in \citesec{RDZ-CEM-DG:11,CG-PW-PA-RA-MB-RB-QC-CF-SH-SK-WL-RM-DP-NR-DR-AS-MS-CS:99}. 
Under nominal operating conditions, the power generation is optimized to
meet the forecast demand, while obeying the AC power flow laws and
respecting the thermal limits of each transmission line. Thermal limits
constraints are precisely equivalent to phase cohesiveness requirements,
that is, for each line $\{i,j\}$, the angular distance $| \theta_{i} - \theta_{j}|$ 
needs to be bounded such that the corresponding power flow 
$a_{ij}\sin(\theta_{i}-\theta_{j})$ is bounded.
Here, we found the optimal generator power injections through the standard optimal power flow solver provided by {\em MATPOWER} \citesec{RDZ-CEM-DG:11}.

In order to test the synchronization condition \eqref{eq-SI: sync condition - gamma} in a volatile smart grid scenario, we make the following changes to the nominal IEEE test cases with optimal generation: 

\begin{enumerate}

	\item[(i)] {\em Fluctuating loads with stochastic power demand:} We assume fluctuating demand and randomize 50\% of all loads (selected independently with identical distribution) to deviate from the forecasted loads with Gaussian statistics (with nominal power injection as mean and standard deviation 0.3 in per unit system);
	
		\item[(ii)] {\em Renewables with stochastic power generation:} We assume that the grid is penetrated by renewables with severely fluctuating power outputs, for example, wind or solar farms, and we randomize 33\% of all generating units (selected independently with identical distribution) to deviate from the nominally scheduled generation with Gaussian statistics (with nominal power injection as mean and standard deviation 0.3 in per unit system); and
		
		\item[(iii)] {\em Fast-ramping generation and controllable loads:} Following the paradigm of {\em smart operation of smart grids} \citesec{PPV-FFW-JWB:11}, the fluctuations can be mitigated by fast-ramping generation, such as fast-response energy storage including batteries and flywheels, and controllable loads, such as large-scale server farms or fleets of plug-in hybrid electrical vehicles. Here, we assume that the grid is equipped with 10\% fast-ramping generation (10\% of all generators, selected independently with identical distribution) and 10\% controllable loads (10\% of all loads, selected independently with identical distribution), and the power imbalance (caused by fluctuating demand and generation) is uniformly dispatched among these adjustable power sources.

\end{enumerate}

For each of the ten IEEE test cases with optimal generator power injections, we construct 1000 random realizations of the scenario (i)-(iii) described above. For each realization, we numerically check for the existence of a solution $\theta^{*} \in \bar \Delta_{G}(\gamma)$, $\gamma \in {[0,\pi/2[}$ to the AC power flow equations, the right-hand side of the power network dynamics \eqref{eq-SI: generator dynamics}-\eqref{eq-SI: bus dynamics}, given~by
\begin{align}
	\begin{split}
	P_{\textup{m},i} 
	&= 
	\sum\nolimits_{j=1}^{n} a_{ij} \sin(\theta_{i} - \theta_{j})
	\,,\;\;\;  i \in \mc V_{1} \,,\\
	 P_{\textup{l},i}
	&=
	- \sum\nolimits_{j=1}^{n} a_{ij} \sin(\theta_{i} - \theta_{j})
	\,,\;\;\; i \in \mc V_{2} \,.
	\end{split}
	\label{eq-SI: AC power flow}
\end{align}
The solution to the AC power flow equations \eqref{eq-SI: AC power flow} is found via the AC power flow solver provided by {\em MATPOWER} \citesec{RDZ-CEM-DG:11}.  Notice that, by Lemma \ref{Lemma: stable fixed point in pi/2 arc}, if such a solution $\theta^{*}$ exists, then it is unique (up to rotational invariance) and also locally exponentially stable with respect to the power network dynamics \eqref{eq-SI: generator dynamics}-\eqref{eq-SI: bus dynamics}.
Next, we compare the numerical solution $\theta^{*}$ with the results predicted by our synchronization condition \eqref{eq-SI: sync condition - gamma}.
As discussed in Remark \ref{Remark: interpretation of sync condition}, a physical insightful and computationally efficient way to evaluate condition \eqref{eq-SI: sync condition - gamma} is to solve the sparse and linear DC power flow equations given by 
\begin{align}
	\begin{split}
	P_{\textup{m},i} 
	&= 
	\sum\nolimits_{j=1}^{n} a_{ij} (\delta_{i} - \delta{j})
	\,,\;\;\;  i \in \mc V_{1} \,,\\
	 P_{\textup{l},i}
	&=
	- \sum\nolimits_{j=1}^{n} a_{ij} (\delta_{i} - \delta_{j})
	\,,\;\;\; i \in \mc V_{2} \,.
	\end{split}
	\label{eq-SI: DC power flow}
\end{align}
The solution $\delta^{*}$ of the DC power flow equations \eqref{eq-SI: DC power flow} is defined uniquely up to the usual translational invariance. 
Given the solution $\delta^{*}$ of the DC power flow equations \eqref{eq-SI: DC power flow}, the left-hand side of our synchronization condition \eqref{eq-SI: sync condition - gamma} evaluates to $\| B^{T} L^{\dagger} \omega \|_{\infty} = \| L^{\dagger} \omega \|_{\mc E,\infty} = \max_{\{i,j\} \in \mc E} | \delta_{i}^{*} - \delta_{j}^{*}|$.

Finally, we compare our prediction with the numerical results. If $\| B^{T} L^{\dagger} \omega \|_{\infty} \leq \sin(\gamma)$ for some $\gamma \in {[0,\pi/2[}$, then condition \eqref{eq-SI: sync condition - gamma} predicts that there exists a stable solution $\theta \in \bar \Delta_{G}(\gamma)$, or alternatively $\theta \in \bar \Delta_{G}(\arcsin(\| B^{T} L^{\dagger} \omega \|_{\infty}))$. To validate this hypothesis, we compare the numerical solution $\theta^{*}$ to the AC power flow equations \eqref{eq-SI: AC power flow} with our prediction $\theta^{*} \in \bar \Delta_{G}(\arcsin(\| B^{T} L^{\dagger} \omega \|_{\infty}))$.
Our findings and the detailed statistics are reported in Table 1 of the main manuscript. It can be observed that condition \eqref{eq-SI: sync condition - gamma} predicts the correct phase cohesiveness $| \theta_{i}^{*} - \theta_{j}^{*}|$ along all transmission lines $\{i,j\} \in \mc E$ with extremely high accuracy even for large-scale networks,  such as the Polish power grid model featuring 2383 nodes.

\subsection{Simulation Data for IEEE Reliability Test System~96}

The IEEE Reliability Test System 1996 (RTS 96) is a widely adopted and relatively large-scale power network test case, which has been designed as a benchmark model for power flow and stability studies.
The RTS 96 is a multi-area model featuring 40 load buses and 33 generation buses, as illustrated in Figure 4 in the main manuscript. 
The network parameters and the dynamic generator parameters can be found in \citesec{CG-PW-PA-RA-MB-RB-QC-CF-SH-SK-WL-RM-DP-NR-DR-AS-MS-CS:99}. 

%
The quantities $a_{ij}$ in the coupled oscillator model \eqref{eq-SI: coupled oscillator model} correspond to the product of the voltage magnitudes at buses $i$ and $j$ as well the susceptance of the transmission line connecting buses $i$ and $j$. For a given set of power injections at the buses and branch parameters, the voltage magnitudes and initial phase angles were calculated using the optimal power flow solver provided by {\em MATPOWER} \citesec{RDZ-CEM-DG:11}. 
The quantities $\omega_{i}$, $i \in \mc V_{2}$, are the real power demands at loads, and $\omega_{i}$, $i \in \mc V_{1}$, are the real power injections at the generators, which were found through the optimal power flow solver provided by {\em MATPOWER} \citesec{RDZ-CEM-DG:11}.
We made the following changes in order to adapt the detailed RTS 96 model to the classic structure-preserving power network model \eqref{eq-SI: generator dynamics}-\eqref{eq-SI: bus dynamics} describing the generator rotor and voltage phase dynamics. First, we replaced the synchronous condenser in the original RTS 96 model \citesec{CG-PW-PA-RA-MB-RB-QC-CF-SH-SK-WL-RM-DP-NR-DR-AS-MS-CS:99} by a U50 hydro generator.
Second, since the numerical values of the damping coefficients $D_{i}$ are not contained in the original RTS 96 description \citesec{CG-PW-PA-RA-MB-RB-QC-CF-SH-SK-WL-RM-DP-NR-DR-AS-MS-CS:99}, we chose the following values to be found in \citesec{PK:94}: for the generator damping, we chose the uniform damping coefficient $D_{i} = 1$ in per unit system and for $i \in \mc V_{1}$, and for the load frequency coefficient we chose $D_{i} = 0.1 \textup{\,s}$ for $i \in \mc V_{2}$. 
Third and finally, we discarded an optional high voltage DC link for the branch $\{113,316\}$.

\subsection{Bifurcation Scenario in the IEEE Reliability Test System 96}
 
As shown in the main manuscript, an imbalanced power dispatch in the RTS 96 network together with a tripped generator (generator 323) in the Southeastern (green) area results in a loss of synchrony since the maximal power transfer is limited due to thermal constraints. This loss of synchrony can be predicted by our synchronization condition \eqref{eq-SI: sync condition - gamma} with extremely high accuracy.
In the following, we show that a similar loss of synchrony occurs, even if the generator 323 is not disconnected and there are no thermal limit constraints on the transmission lines. In this case, the loss of synchrony is due to a saddle node bifurcation at an inter-area angle of $\pi/2$, which can be predicted accurately by condition \eqref{eq-SI: sync condition - gamma} as~well.

For the following dynamic simulation we consider again an imbalanced power dispatch: the demand at each load in the Southeast (green) area is increased by a uniform amount and the resulting power imbalance is compensated by uniformly increasing the generation at each generator in the two Western (blue) areas. 
The imbalanced power dispatch essentially transforms the RTS 96 into a two-oscillator network, and we observe the classic loss of synchrony through a saddle-node bifurcation \citesec{FD-FB:10w,ID:92} shown in Figures \ref{Fig: RTS 96 -- stable} and \ref{Fig: RTS 96 -- unstable}.
In particular, the network is still synchronized for a load increase of 141\% resulting in $\left\| L^{\dagger} \omega \right\|_{\mc E,\infty} = 0.9995  < 1$. If the loads are increased by an additional 10\% resulting in $\left\| L^{\dagger} \omega \right\|_{\mc E,\infty} = 1.0560 > 1$, then synchronization is lost and the areas separate via the transmission lines $\{121, 325\}$ and $\{223, 318\}$.
%
Of course, in real-world power networks
the transmission lines $\{121, 325\}$ and $\{223, 318\}$ would be separated at some smaller inter-area angle $\gamma^{*} \ll \pi/2$ due to thermal limits. For instance, the transmission line $\{121,325\}$ is separated at the angle $\gamma^{*} = 0.1978 \pi$ 
corresponding to a 78\% load increase, which can be predicted from condition \eqref{eq-SI: sync condition} as $\gamma^{*} \approx \arcsin(\| L^{\dagger} \omega\|_{\mc E,\infty})$ with an accuracy of $0.0023\pi$. 
In summary, this transmission line  scenario illustrates the accuracy of the proposed condition \eqref{eq-SI: sync condition}.

\begin{figure*}
	\centering{
	\includegraphics[width=1.9\columnwidth]{./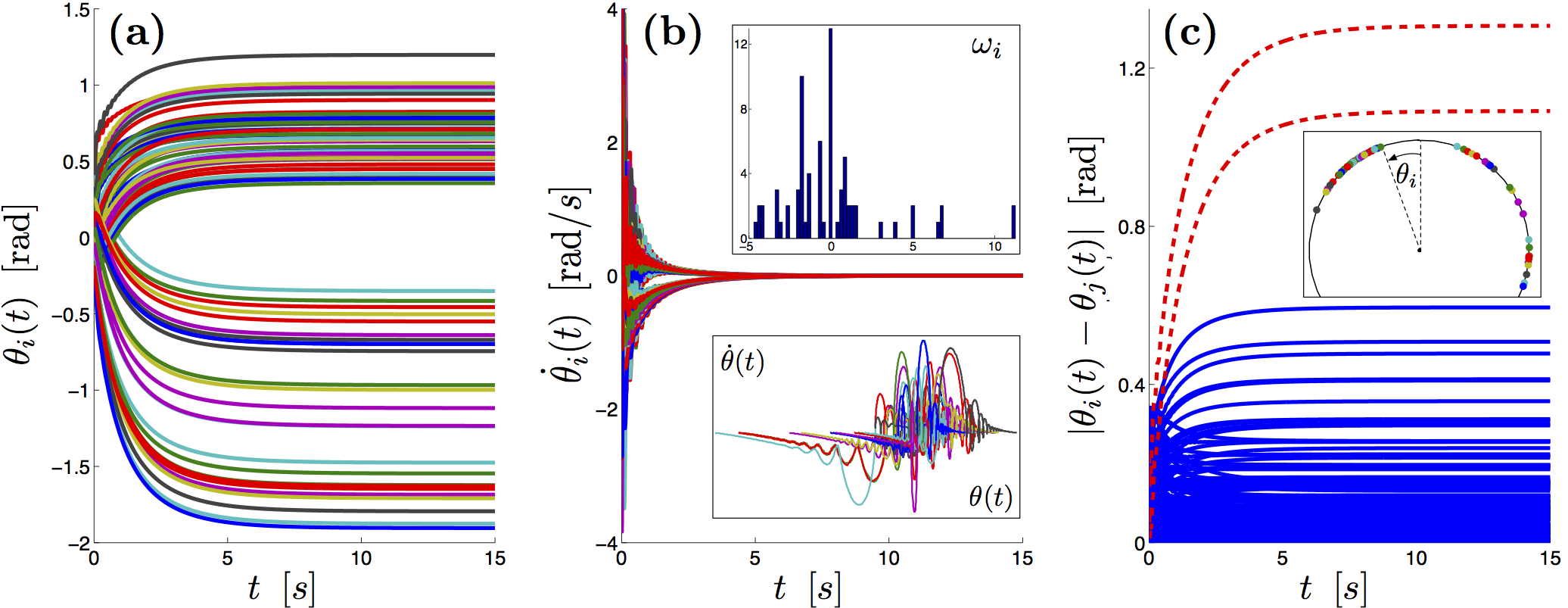}
	\caption{Time series of the RTS 96 dynamics for 141\% load increase resulting in $\| B^{T} L^{\dagger} \omega \|_{\infty} = \| L^{\dagger} \omega \|_{\mc E,\infty} = 0.9995  < 1$. Figure {\bf (a)} depicts the angles $\theta_{i}(t)$, Figure {\bf (b)} shows the frequencies $\dot \theta_{i}(t)$, and Figure {\bf (c)} depicts the angular distances $| \theta_{i}(t) - \theta_{j}(t)|$ over transmission lines, where the red dashed curves correspond to the pairs $\{121,325\}$ and $\{223,318\}$. The inserts show the power injections $\omega_{i}$, the phase space of the generator dynamics $(\theta(t),\dot\theta(t))$, and the stationary angles~$\theta_{i}$.}
	\label{Fig: RTS 96 -- stable}
	}
\end{figure*}

\begin{figure*}
	\centering{
	\includegraphics[width=1.9\columnwidth]{./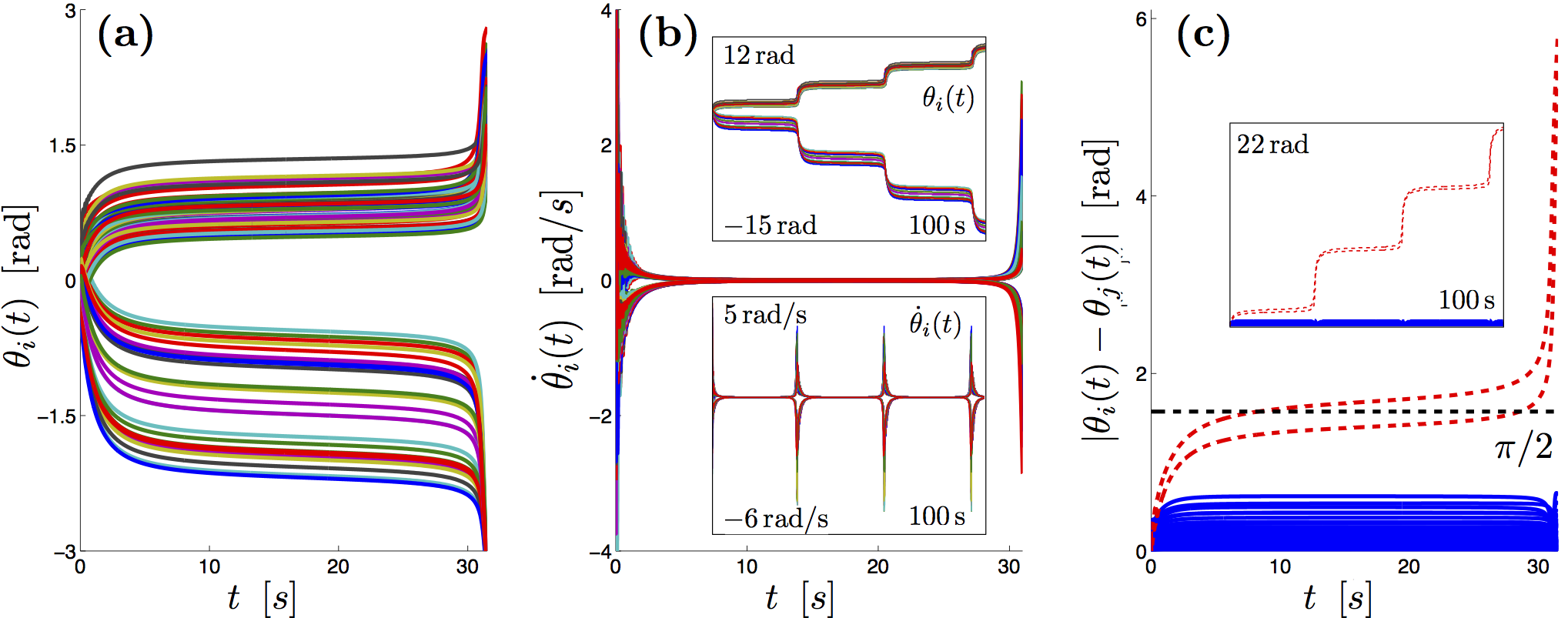}
	\caption{Time series of the RTS 96 dynamics for 151\% load increase resulting in $B^{T} L^{\dagger} \omega \|_{\infty} = \|  L^{\dagger} \omega \|_{\mc E,\infty} = 1.0560 > 1$. Figure {\bf (a)} depicts the angles $\theta_{i}(t)$, Figure {\bf (b)} depicts the frequencies $\dot \theta_{i}(t)$, and Figure {\bf (c)} depicts the angular distances $| \theta_{i}(t) - \theta_{j}(t)|$ over transmission lines, which diverge for the pairs $\{121,325\}$ and $\{223,318\}$ shown as red dashed curves. The inserts depict the long-time dynamics simulated over 100s.}
	\label{Fig: RTS 96 -- unstable}
	}
\end{figure*}


\begin{table}[h]
\caption{Results of the Monte Carlo simulations to test the hypothesis $\mc H$.}
\label{Table: Monte Carlo Simulations}
\end{table}
\vspace{-2cm}

\begin{table*}[h]
     \caption{Results of the Monte Carlo simulations to test the hypothesis $\mc H$.}
  \begin{tabular}{ | l || c | c |l}
    \hline
    { $\Bigl.$ 
   nominal random network}  & 
    {failures of hypothesis $\mc H$:} & 
     {empirical probability: } \\
     {parametrized by $(n,\textup{RGM},p,\alpha)$ }& 
      { \#\;$\bigl( \mc H \mbox{\tt{ is not true}}  \bigr)$ } & 
       {$\widehat{ \mbox{\tt{Prob}} }_{(n,\textup{RGM},p,\alpha)} \Bigr.$ }
    \\\hline\hline
     $(10,\textup{ERG},0.15,6)$ & 104 & 99.653 \% 
     \\\hline
    $(10,\textup{ERG},0.3,8)$ & 65 & 99.783 \% 
        \\\hline
    $(10,\textup{ERG},0.5,14)$ & 15 & 99.950 \%
     \\\hline
    $(10,\textup{ERG},0.75,25)$ & 0 & 100 \%
        \\\hline
     $(20,\textup{ERG},0.15,10)$ & 80 & 99.733 \% 
    \\\hline
     $(20,\textup{ERG},0.3,15)$ & 5 & 99.983 \% 
          \\\hline
    $(20,\textup{ERG},0.5,24)$ & 0 & 100 \% 
          \\\hline
    $(20,\textup{ERG},0.75,45)$ & 0 & 100 \% 
          \\\hline
    $(30,\textup{ERG},0.15,13)$ & 22 & 99.927 \% 
     \\\hline
    $(30,\textup{ERG},0.3,20)$ & 0 & 100 \%
       \\\hline
    $(30,\textup{ERG},0.5,37)$ & 0 & 100 \%
            \\\hline
    $(30,\textup{ERG},0.75,65)$ & 0 & 100 \%
      \\\hline
    $(60,\textup{ERG},0.15,20)$ & 1 & 99.997 \% 
          \\\hline
    $(60,\textup{ERG},0.3,40)$ & 0 & 100 \%
          \\\hline
    $(60,\textup{ERG},0.5,70)$ & 0 & 100 \%
                \\\hline
    $(60,\textup{ERG},0.75,125)$ & 0 & 100 \%
          \\\hline
    $(120,\textup{ERG},0.15,35)$ & 0 & 100 \%
          \\\hline
    $(120,\textup{ERG},0.3,75)$ & 0 & 100 \%
               \\\hline
    $(120,\textup{ERG},0.5,130)$ & 0 & 100 \%
       \\\hline
    $(120,\textup{ERG},0.75,235)$ & 0 & 100 \%
        \\\hline\hline
    $(10,\textup{RGG},0.3,10)$ & 15 & 99.950 \%
    \\\hline
    $(10,\textup{RGG},0.5,15)$ & 18 & 99.940 \%
    \\\hline
    $(20,\textup{RGG},0.3,10)$ & 23 & 99.924 \%
    \\\hline
    $(20,\textup{RGG},0.5,15)$ & 3 & 99.990 \%
    \\\hline
    $(30,\textup{RGG},0.3,10)$ & 31 & 99.897 \%
    \\\hline
    $(30,\textup{RGG},0.5,15)$ & 1 & 99.997 \%
    \\\hline
    $(60,\textup{RGG},0.3,10)$ & 3 & 99.990 \%
    \\\hline
    $(60,\textup{RGG},0.5,15)$ & 0 & 100 \%
    \\\hline
    $(120,\textup{RGG},0.3,10)$ & 0 & 100 \%
     \\\hline
    $(120,\textup{RGG},0.5,15)$ & 0 & 100 \%
    \\\hline\hline
    $(10,\textup{SMN},0.1,10)$ & 2 & 99.994 \%
    \\\hline
    $(10,\textup{SMN},0.2,10)$ & 0 & 100 \%
    \\\hline
    $(20,\textup{SMN},0.1,13)$ & 0 & 100 \%
    \\\hline
    $(20,\textup{SMN},0.2,13)$ & 0 & 100 \%
    \\\hline
    $(30,\textup{SMN},0.1,10)$ & 0 & 100 \%
    \\\hline
    $(30,\textup{SMN},0.2,13)$ & 0 & 100 \%
    \\\hline
    $(60,\textup{SMN},0.1,7)$ & 0 & 100 \%
        \\\hline
    $(60,\textup{SMN},0.2,7)$ & 0 & 100 \%
    \\\hline
    $(120,\textup{SMN},0.1,4)$ & 0 & 100 \%
        \\\hline
    $(120,\textup{SMN},0.2,4)$ & 0 & 100 \%
     \\\hline\hline
     { $\Bigl.$ 
     over all $1.2 \cdot 10^{6}$ instances } & 
     { 388} & 
     { 99.968 \% }
     \\\hline
  \end{tabular}
  \tablenote{}{Overall, $1.2 \cdot 10^{6}$ instances of $\{G(\mc V,\mc E, A),\omega\}$ were constructed as described in {\em (i) - (iv)} above, each satisfying $\|B^{T} L^{\dagger} \omega \|_{\infty} < 1$. For each instance, the fixed-point equation \eqref{eq-SI: fixed-point equations - vector form} was solved with accuracy $10^{-6}$, and the failures of the hypothesis $\mc H$ were reported within an accuracy of $10^{-4}$, that is, failures of order $10^{-5}$ were discarded.}
\end{table*}



\end{article}

\end{document}